\newtheorem{theorem}{Theorem}
\newtheorem{lemma}{Lemma}
\newtheorem{definition}{Definition}
\newtheorem{proposition}{Proposition}
\newtheorem{corollary}{Corollary}
\newcommand{\co}{{\hspace{0.25mm}:\hspace{0.25mm}}}
\begin{document}
{\selectlanguage{english}
\binoppenalty = 10000 %
\relpenalty   = 10000 %

\pagestyle{headings} \makeatletter
\renewcommand{\@evenhead}{\raisebox{0pt}[\headheight][0pt]{\vbox{\hbox to\textwidth{\thepage\hfill \strut {\small Grigory. K. Olkhovikov}}\hrule}}}
\renewcommand{\@oddhead}{\raisebox{0pt}[\headheight][0pt]{\vbox{\hbox to\textwidth{{Completeness for explicit jstit logic}\hfill \strut\thepage}\hrule}}}
\makeatother

\title{Stit logic of justification announcements: a completeness result}
\author{Grigory K. Olkhovikov\\Ruhr University, Bochum \\
Department of Philosophy II; Room NAFO 02/299 \\
Universit\"atsstra{\ss}e 150 \\
D--44780 Bochum, Germany \\
Tel.: +4915123707030\\
Email: grigory.olkhovikov@\{rub.de, gmail.com\}}
\date{}
\maketitle
\begin{quote}
\textbf{Abstract}. We present a completeness result for a logical
system which combines stit logic and justification logic in order
to represent proving activity of the agents. This logic is
interpreted over the semantics introduced in \cite{OLWA}. We
define a Hilbert-style axiomatic system for this logic and show
that this system is strongly complete relative to the intended
semantics.
\end{quote}

\begin{quote}
stit logic, justification logic, completeness, compactness
\end{quote}

\section{Introduction}
Stit logic of justification announcements (JA-STIT) is a formalism
for reasoning about proving activity of agents which combines
expressive powers of stit logic (see e.g. \cite{belnap2001facing})
with those of justification logic (see e.g. \cite{ArtemovN05}).
The two latter logics provide for the pure agency side and the
pure proof ontology side of the proving activity, respectively, so
that it is assumed that doing something is in effect seeing to it
that something is the case, and that every actual proof can be
understood as a realization of some proof polynomial from
justification logic. The only missing element in this picture is
then the link between the two components, i.e. how agents can see
to it that a proof is realized. Such a realization may come in
different forms, researchers may, for example, exchange emails or
put the proofs they have found on a common whiteboard. In stit
logic of justification announcements this rather common situation
is idealized in that only public proving activity of agents is
taken into account. In other words, taking up the whiteboard
metaphor, the agents in question can only participate in proving
activity by putting their proofs on the common whiteboard for
everyone to see, and not by sending one another private messages
or scribbling in their private notebooks. Therefore, the only type
of communicative actions within this idealized community turns out
to be a variant of \emph{public announcement} of proof
polynomials.\footnote{Even though this type of public announcement
actions plays a central role in our logic, finding any sort of
meaningful connection to the well-known public announcement logic
(PAL) looks like a non-trivial matter. One obvious reason for this
is the difference between the underlying action logics (stit logic
in the case at hand vs. dynamic logic of PAL). Another reason is
that we are studying public announcements of a special type of
objects (i.e. proof polynomials) in a multi-agent setting, whereas
in PAL sentence announcements are studied, and these sentence
announcements are not tied to a particular agent. Moreover, in
JA-STIT public announcements are not reducible to static formulas
and are actually intended to be that way.} This idealization lends
the medium of public announcement, i.e. the metaphorical community
whiteboard, the status of the only interface between the agentive
efforts of the community and the abstract realm of proofs. Proof
polynomials may end up being \emph{presented} on the whiteboard,
and the agents may \emph{see to it} that this or that particular
proof is presented there. The whiteboard itself is also idealized
in that we assume that there is always enough space on it to put
up another proof, and that every proof, once on the whiteboard,
remains there forever.

The language of stit logic of justification announcements then
combines the full sets of justification and stit modalities with a
new modality $Et$ which says that the proof polynomial $t$ is
presented to the community (or, to continue with the whiteboard
metaphor, that $t$ is put on the community whiteboard). In this
way arises a non-trivial and expressively rich logic, and the main
purpose of the present paper is to provide a strongly complete
axiomatization for this logic.

The layout of the rest of the paper is as follows. In Section
\ref{basic} we define the language and the semantics of the logic
at hand. We also briefly characterize its relations with other
formalisms combining the resources of justification logic and stit
logic, studied in the earlier publications, namely, in \cite{OLWA}
and \cite{OLWA2}. We mention that the finite model property fails
for the stit logic of justification announcements in a rather
strong form, and show that the language of JA-STIT is expressive
enough to distinguish between the full class of justification stit
models and the class of justification stit models based on
discrete time.

The system of axioms for JA-STIT is then presented in Section
\ref{axioms}. We immediately show this system to be sound w.r.t.
the semantics introduced in Section \ref{basic}, and deduce some
theorems in the system.

Section \ref{canonicalmodel} then contains the bulk of technical
work necessary for the proof of completeness of the presented
axiomatization w.r.t. the class of normal jstit models. It gives a
stepwise construction and adequacy check for all the numerous
components of the canonical model and ends with a proof of a truth
lemma. Section \ref{main} then gives a concise proof of the
completeness result and draws some quick corollaries including the
compactness property.

Then follows Section \ref{conclusion}, giving some conclusions and
drafting directions for future work.

In what follows we will be assuming, due to space limitations, a
basic acquaintance with both stit logic and justification logic.
We recommend to peruse \cite[Ch. 2]{horty2001agency} for a quick
introduction to the basics of stit logic, and
\cite{sep-logic-justification} for the same w.r.t. justification
logic.

\section{Basic definitions and notation}\label{basic}

\subsection{Preliminaries}

We fix some preliminaries. First, we choose a finite set $Ag$
disjoint from all the other sets to be defined below. Individual
agents from this set will be denoted by letters $i$ and $j$. Then
we fix countably infinite sets $PVar$ of proof variables (denoted
by $x,y,z,w,u$) and $PConst$ of proof constants (denoted by
$a,b,c,d$). When needed, subscripts and superscripts will be used
with the above notations or any other notations to be introduced
in this paper. Set $Pol$ of proof polynomials is then defined by
the following BNF:
$$
t := x \mid c \mid s + t \mid s \times t \mid !t,
$$
with $x \in PVar$, $c \in PConst$, and $s,t$ ranging over elements
of $Pol$. In the above definition, $+$ stands for the \emph{sum}
of proofs, $\times$ denotes \emph{application} of its left
argument to the right one, and $!$ denotes the so-called
\emph{proof-checker}, so that $!t$ checks the correctness of proof
$t$.

In order to define the set $Form$ of formulas, we fix a countably
infinite set $Var$ of propositional variables to be denoted by
letters $p,q,r,s$. Formulas themselves will be denoted by letters
$A,B,C,D$, and the definition of $Form$ is supplied by the
following BNF:
\begin{align*}
A := p \mid A \wedge B \mid \neg A \mid [j]A \mid \Box A \mid t\co
A \mid KA \mid Et,
\end{align*}
with $p \in Var$, $j \in Ag$ and $t \in Pol$.

It is clear from the above definition of $Form$ that we are
considering a version of modal propositional language. As for the
informal interpretations of modalities, $[j]A$ is the so-called
cstit action modality and $\Box$ is the historical necessity
modality; both modailities are borrowed from stit logic. The next
two modailities, $KA$ and $t\co A$, come from justification logic
and the latter is interpreted as ``$t$ proves $A$'', whereas the
former is the strong epistemic modality ``$A$ is known''.

We assume $\Diamond$ as notation for the dual modality of $\Box$.
As usual, $\omega$ will denote the set of natural numbers
including $0$, ordered in the natural way.

\subsection{Semantics}

For the language at hand, we assume the following semantics. A
\emph{justification stit} (or jstit, for short) \emph{model} is a
structure
$$
\mathcal{M} = \langle Tree, \unlhd, Choice, Act, R, R_e,
\mathcal{E}, V\rangle
$$
such that:
\begin{enumerate}
\item $Tree$ is a non-empty set. Elements of $Tree$ are called
\emph{moments}.

\item $\unlhd$ is a partial order on $Tree$ for which a temporal
interpretation is assumed. We will also freely use notations like
$\unrhd$, $\lhd$, and $\rhd$ to denote the inverse relation and
the irreflexive companions.\footnote{A more common notation $\leq$
is not convenient for us since we also widely use $\leq$ in this
paper to denote the natural order relation between elements of
$\omega$.}

\item $Hist(\mathcal{M})$ is a set of maximal $\unlhd$-chains in
$Tree$. Since $Hist(\mathcal{M})$ is completely determined by
$Tree$ and $\unlhd$, it is not included into the model structure
as a separate component. Elements of $Hist(\mathcal{M})$ are
called \emph{histories}. The set of histories containing a given
moment $m$ will be denoted $H_m$. The following set:
$$
MH(\mathcal{M}) = \{ (m,h)\mid m \in Tree,\, h \in H_m \},
$$
called the set of \emph{moment-history pairs}, will be used to
evaluate the elements of $Form$.

\item $Choice$ is a function mapping $Tree \times Agent$ into
$2^{2^{Hist}}$ in such a way that for any given $j \in Agent$ and
$m \in Tree$ we have as $Choice(m,j)$ (to be denoted as
$Choice^m_j$ below) a partition of $H_m$. For a given $h \in H_m$
we will denote by $Choice^m_j(h)$ the element of partition
$Choice^m_j$ containing $h$.

\item $Act$ is a function mapping $MH(\mathcal{M})$ into
$2^{Pol}$.

\item $R$ and $R_e$ are two pre-order on $Tree$ giving two
versions of epistemic accessibility relation. They are assumed to
be connected by inclusion $R \subseteq R_e$.

\item $\mathcal{E}$ is a function mapping $Tree \times Pol$ into
$2^{Form}$.

\item $V$ is the evaluation function, mapping the set $Var$ into
$2^{MH(\mathcal{M})}$.
\end{enumerate}

However, not all structures of the above described type are
admitted as jstit models. A number of additional restrictions
needs to be satisfied. More precisely, we assume satisfaction of
the following constraints:

\begin{enumerate}
\item Historical connection:
$$
(\forall m,m_1 \in Tree)(\exists m_2 \in Tree)(m_2 \unlhd m \wedge
m_2 \unlhd m_1).
$$

\item No backward branching:
$$
(\forall m,m_1,m_2 \in Tree)((m_1 \unlhd m \wedge m_2 \unlhd m)
\to (m_1 \unlhd m_2 \vee m_2 \unlhd m_1)).
$$

\item No choice between undivided histories:
$$
(\forall m,m' \in Tree)(\forall h,h' \in H_m)(m \lhd m' \wedge m'
\in h \cap h' \to Choice^m_j(h) = Choice^m_j(h'))
$$
for every $j \in Agent$.

\item Independence of agents:
$$
(\forall m\in Tree)(\forall f:Ag \to 2^{H_m})((\forall j \in
Ag)(f(j) \in Choice^m_j) \Rightarrow \bigcap_{j \in Ag}f(j) \neq
\emptyset).
$$

\item Monotonicity of evidence:
$$
(\forall t \in Pol)(\forall m,m' \in Tree)(R_e(m,m') \Rightarrow
\mathcal{E}(m,t) \subseteq \mathcal{E}(m',t)).
$$

\item Evidence closure properties. For arbitrary $m \in Tree$,
$s,t \in Pol$ and $A, B \in Form$ it is assumed that:
\begin{enumerate}
\item $A \to B \in \mathcal{E}(m,s) \wedge A \in \mathcal{E}(m,t)
\Rightarrow B \in \mathcal{E}(m,s\times t)$;

\item $\mathcal{E}(m,s) \cup \mathcal{E}(m,t) \subseteq
\mathcal{E}(m,s + t)$.
\item $A \in \mathcal{E}(m,t) \Rightarrow
t:A \in \mathcal{E}(m,!t)$;
\end{enumerate}

\item Expansion of presented proofs:
$$
(\forall m,m' \in Tree)(m' \lhd m \Rightarrow \forall h \in H_m
(Act(m',h) \subseteq Act(m,h))).
$$

\item No new proofs guaranteed:
$$
(\forall m \in Tree)(\bigcap_{h \in H_m}(Act(m,h)) \subseteq
\bigcup_{m' \lhd m, h \in H_m}(Act(m',h))).
$$

\item Presenting a new proof makes histories divide:
$$
(\forall m \in Tree)(\forall h,h' \in H_m)((\exists m' \rhd m)(m'
\in h \cap h') \Rightarrow (Act(m,h) = Act(m,h'))).
$$

\item Future always matters:
$$
\unlhd \subseteq R.
$$

\item Presented proofs are epistemically transparent:
$$
(\forall m,m' \in Tree)(R_e(m,m') \Rightarrow (\bigcap_{h \in
H_m}(Act(m,h)) \subseteq \bigcap_{h' \in H_{m'}}(Act(m',h')))).
$$
\end{enumerate}

We offer some intuitive explanation for the above defined notion
of jstit model. Jstit models were introduced in \cite{OLWA} for
the logics based on the combination of stit and justification
modalities. Due to space limitations, we only explain the
intuitions behind jstit models very briefly, and we urge the
reader to consult \cite[Section 3]{OLWA} for a more comprehensive
explanations, whenever needed.

The components like $Tree$, $\unlhd$, $Choice$ and $V$ are
inherited from stit logic, whereas $R$, $R_e$, and $\mathcal{E}$
come from justification logic. The only new component is the
function $Act$, which gives out, to take up the whiteboard
metaphor, the current state of this whiteboard at any given moment
under any given history. When interpreting $Act$, we draw on the
classical stit distinction between dynamic (agentive) and static
(moment-determinate) entities, assuming that the presence of a
given proof polynomial $t$ on the community whiteboard only
becomes an accomplished fact at $m$ when $t$ is present in
$Act(m,h)$ for \emph{every} $h \in H_m$. On the other hand, if $t$
is in $Act(m,h)$ only for \emph{some} $h \in H_m$ this means that
$t$ is rather in a dynamic state of \emph{being presented}, rather
than being present, to the community.

The numbered list of semantical constraints above then just builds
on these intuitions. Constraints $1$--$4$ are borrowed from stit
logic, constraints $5$ and $6$ are inherited from justification
logic. Constraint $7$ just says that nothing gets erased from the
whiteboard, constraint $8$ says a new proof cannot spring into
existence as a static (i.e. moment-determinate) feature of the
environment out of nothing, but rather has to come as a result (or
a by-product) of a previous activity. Constraint $9$ is just a
corollary to constraint $3$ in the richer environment of jstit
models, constraint $10$ says that the possible future of the given
moment is always epistemically relevant in this moment, and
constraint $11$ says that the community immediately knows
everything that has firmly made its way onto the whiteboard.

For the members of $Form$, we will assume the following
inductively defined satisfaction relation. For every jstit model
$\mathcal{M} = \langle Tree, \unlhd, Choice, Act, R,R_e,
\mathcal{E}, V\rangle$ and for every $(m,h) \in MH(\mathcal{M})$
we stipulate that:

\begin{align*}
&\mathcal{M}, m, h \models p \Leftrightarrow (m,h) \in
V(p);\\
&\mathcal{M}, m, h \models [j]A \Leftrightarrow (\forall h'
\in Choice^m_j(h))(\mathcal{M}, m, h' \models A);\\
&\mathcal{M}, m, h \models \Box A \Leftrightarrow (\forall h'
\in H_m)(\mathcal{M}, m, h' \models A);\\
&\mathcal{M}, m, h \models KA \Leftrightarrow \forall m'\forall
h'(R(m,m') \& h' \in H_{m'} \Rightarrow \mathcal{M}, m', h'
\models A);\\
&\mathcal{M}, m, h \models t\co A \Leftrightarrow A \in
\mathcal{E}(m,t) \& (\forall m' \in Tree)(R_e(m,m') \& h' \in
H_{m'} \Rightarrow \mathcal{M}, m', h'
\models A);\\
&\mathcal{M}, m, h \models Et \Leftrightarrow t \in Act(m,h).
\end{align*}
In the above clauses we assume that $p \in Var$; we also assume
standard clauses for the Boolean connectives.  We further assume
standard definitions for satisfiability and
 validity of formulas and sets of formulas in the presented
semantics.

One can in principle simplify the above semantics by introducing
the additional constraint that $R_e \subseteq R$. This leads to a
collapse of the two epistemic accessibility relation into one.
Therefore, we will call jstit models satisfying $R_e \subseteq R$
\emph{unirelational jstit models}. It is known that such a
simplification in the context of pure justification logic does not
affect the set of theorems (see, e.g. \cite{fittingreport} and
\cite[Comment 6.5]{ArtemovN05}), and we will show that this is
also the case within the more expressive environment of JA-STIT.
In fact, the canonical model to be constructed in our completeness
is unirelational, therefore, we offer some comments as to the
simplifications of semantics available in the unirelational
setting.

We observe that one can equivalently define a unirelational jstit
model as a structure $\mathcal{M} = \langle Tree, \unlhd, Choice,
Act, R, \mathcal{E}, V\rangle$ satisfying all the constraints for
the jstit models, except that in the numbered constraints one
substitutes $R$ for $R_e$. Also, in the context of unirelational
jstit models, it is possible to simplify the satisfation clause
for $t\co A$ as follows:
$$
    \mathcal{M}, m, h \models t\co A \Leftrightarrow A \in
\mathcal{E}(m,t) \& \mathcal{M}, m, h \models KA.
$$
Before we move on, we briefly clarify the relation of JA-STIT to
other logics based on the combination of justification and stit
modalities to be found in the existing literature. Firstly,
JA-STIT is a fragment of the logic introduced in \cite{OLWA2}
under the name `logic of $E$-notions'. The difference is that in
the logic of $E$-notions an implicit version of $Et$-modality is
also present. This implicit version comes in the format $EA$,
where $A \in Form$ and has the meaning that \emph{some} proof of
$A$ is presented to the community. The satisfaction clause for
this additional modality looks as follows:
$$
\mathcal{M}, m, h \models EA \Leftrightarrow (\exists t\in Pol)(t
\in Act(m,h)  \& \mathcal{M}, m, h \models t\co A).
$$
It is pretty obvious that $EA$ is not definable using expressive
powers of JA-STIT, so that JA-STIT is a proper fragment of the
logic of $E$-notions.

Another natural logic featuring the full set of justification and
stit modalities is the basic jstit logic introduced in \cite{OLWA}
and further explored in \cite{OLWA2}. This logic is also
interpreted over the class of jstit models which facilitates the
comparison. In basic jstit logic justification and stit modalities
are augmented with the following set of four modalities
representing different modes of proving activity:
\begin{center}
 \begin{tabular}{|c| c|}
 \hline
 Notation & Informal interpretation\\
 \hline
 $Prove(j, A)$ & Agent $j$ proves $A$\\
$Prove(j, t, A)$ & Agent $j$ proves $A$ by $t$ \\
$Proven(A)$ & $A$ has been proven\\
$Proven(t, A)$ & $A$ has been proven by $t$\\
 \hline
\end{tabular}
\end{center}
In the above table, $A \in Form$, $t \in Pol$ and $j \in Ag$ are
designating the type and arrangement of arguments for the listed
modalities. It turns out that two of these four modalities, namely
$Prove(j,t,A)$ and $Proven(t,A)$ are definable in JA-STIT. These
modalities are interpreted by the following satisfaction clauses:
\begin{align*}
 &\mathcal{M}, m, h \models Prove(j, t, A) \Leftrightarrow
(\forall h' \in Choice^m_j(h))(t \in Act(m,h') \& \mathcal{M},m,h
\models t\co A)
\&\\
&\qquad\qquad\qquad\qquad\qquad \&(\exists h'' \in H_m) (t \notin
Act(m,h''));\\
&\mathcal{M}, m, h \models Proven(t, A) \Leftrightarrow (\forall
h' \in H_m) (t \in Act(m,h') \& \mathcal{M}, m, h \models t\co A)
\end{align*}
It is easy to see then that these modalities can be defined within
JA-STIT as follows:
$$
Prove(j,t,A) =_{df} [j]Et \wedge \Diamond\neg Et \wedge t\co A;
$$
$$
Proven(t,A) =_{df} \Box Et \wedge t\co A.
$$
However, as for the other two modalities of the basic jstit logic,
their indefinability within JA-STIT is rather obvious and can be
easily shown. On the other hand, $Et$-modality itself does not
seem to be definable within the basic jstit logic. Given all these
facts, the relation between JA-STIT and the basic jstit logic can
be described as follows. The fragment of basic jstit logic given
by the two modalities $\{ Prove(j,t,A), Proven(t,A) \}$ plus the
set of stit and justification modalities can be faithfully
recovered within JA-STIT. This is a maximal fragment of basic
jstit logic that can be recovered within JA-STIT, and JA-STIT
itself is a proper extension of this fragment in terms of
expressive power. In the other direction, $Et$-modality cannot be
recovered within basic jstit logic, which means that only those
fragments of JA-STIT can be recovered within basic jstit logic
which are confined to combinations of modalities borrowed directly
from justification and stit logics.

\subsection{Concluding remarks}

Before we start with the task of axiomatizing JA-STIT, we briefly
mention some facts about its expressive powers which are relevant
to our chosen format of completeness proof. Firstly, it is worth
noting that under the presented semantics some satisfiable
formulas cannot be satisfied over finite models, or even over
infinite models where all histories are finite. The argument for
this is the same as in implicit fragment of basic jstit logic, for
which this claim was proved in \cite{OLimpl} using $K(\Diamond p
\wedge \Diamond\neg p)$ as an example of a formula which is
satisfiable over jstit models but not over jstit models with
finite histories. This already rules out some methods of proving
completeness like filtration method.

Secondly, it turns out that, even though JA-STIT is not, strictly
speaking, a temporal logic, it can still tell something about the
structure of histories generated in a given jstit model. Indeed,
let us define that a jstit model $\mathcal{M}$ is \emph{based on
discrete time} iff every chain in $Hist(\mathcal{M})$ is
isomorphic to an initial segment of $\omega$, the set of natural
numbers. Then it can be shown that:
\begin{proposition}\label{proposition}
The set of JA-STIT formulas valid over the class of
(unirelational) jstit models is a \textbf{proper} subset of the
set of JA-STIT formulas valid over the class of (unirelational)
jstit models based on discrete time.
\end{proposition}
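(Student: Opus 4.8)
The subset inclusion is immediate: every (unirelational) jstit model based on discrete time is in particular a (unirelational) jstit model, so every formula valid over the wider class is valid over the narrower one. Thus it only remains to \emph{separate} the two classes, that is, to exhibit a single formula $A_0$ that is valid over all discrete-time models but is refuted at some moment-history pair of some non-discrete model. Since I may take the refuting model to be unirelational, the same witness settles the claim for the unirelational and for the general reading simultaneously.

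To find $A_0$ I plan to exploit the only structural feature that distinguishes the two classes, namely that in a discrete-time model each history, being order-isomorphic to an initial segment of $\omega$, is well-ordered and has immediate successors, whereas a general history need not be. The difficulty is that the language has no strict ``next'' operator, and its only inter-moment modality $K$ ranges over the flexible preorder $R \supseteq \unlhd$; so I will route all temporal content through the part of the future that $K$ is guaranteed to see. Using that $\unlhd \subseteq R$ (constraint 10), a formula of the form $K(\ldots)$ imposes its condition on the entire $\unlhd$-future of the evaluation point, regardless of how $R$ is extended. Building on the no-new-proofs and expansion constraints, I will track a single proof $t$ through its \emph{onset}, the point where $t$ passes from the dynamic state $\Diamond Et \wedge \Diamond\neg Et$ of being presented to the static state $\Box Et$ of being present. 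The separating formula will assert, via $K$, that $t$ is eventually present while at every moment where $t$ is not yet present it is still not yet present strictly later as well, i.e.\ that there is no last pre-onset moment.

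The two halves then go as follows. For validity over discrete time I would argue that, along any history isomorphic to an initial segment of $\omega$, the set of moments at which $t$ is present is upward closed (by expansion, constraint 7) and, by well-ordering, has a least element; its immediate predecessor is a \emph{last} pre-onset moment, and by no new proofs (constraint 8) together with expansion this predecessor already carries $t$ in the dynamic being-presented state. This contradicts the stipulation of no last pre-onset moment, so $A_0$ (the negation of that stipulation) holds throughout every discrete-time model. For refutation I would construct an explicit unirelational model one of whose histories is order-isomorphic to a dense chain such as $\mathbb{Q}\cap[0,1]$, with $t$ present exactly on an upward-closed set having no least element, and with $Act$ and $Choice$ arranged so that every pre-onset moment is genuinely in the dynamic state and is followed by further pre-onset moments; at a suitable pre-onset pair the discreteness-consequence in $A_0$ fails while the rest of $A_0$ holds, refuting it.

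The main obstacle is twofold and lies almost entirely on the model-construction side. First, because $K$ is reflexive-transitive over an arbitrary $R \supseteq \unlhd$, I cannot isolate the strict $\unlhd$-order directly, and I must ensure that $A_0$ depends only on the $R$-robust consequences of the constraints (the inclusion $\unlhd \subseteq R$, monotonicity of evidence and of presented proofs, and the no-new-proofs law); otherwise, as simple examples show, a large $R$ can hide the relevant dynamic moment in an $R$-unreachable past and break an otherwise plausible witness even over discrete models. Second, and more seriously, the dense counterexample must satisfy all eleven jstit constraints at once, and the interaction of expansion (7), no new proofs (8), and ``presenting a new proof makes histories divide'' (9) is rigid: these jointly force that a proof already present along an undivided history cannot later be withdrawn and cannot differ across undivided histories, so realising a pre-onset region with no maximal element, without accidentally forcing $t$ to become present at a finite stage, is the delicate step on which the whole separation turns.
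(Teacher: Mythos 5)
The overall strategy is the right one and matches the paper's: establish the inclusion trivially, then separate the classes by a formula that exploits the presence of immediate successors in discrete time via the interaction of the no new proofs guaranteed, expansion of presented proofs, and histories-divide constraints, and refute it on a model containing a densely ordered history. But there is a genuine gap at the very centre of the argument: you never exhibit the separating formula $A_0$, and the properties you describe it as expressing --- ``$t$ is eventually present'', ``still not yet present strictly later'', ``there is no last pre-onset moment'' --- are not expressible in JA-STIT. The only inter-moment modality is $K$, which is a \emph{universal} quantifier over a reflexive preorder $R \supseteq \unlhd$; since $K\varphi \to \varphi$ is valid, any attempt to say ``not $\Box Et$ now, but $\Box Et$ at all strictly later moments'' with a single proof polynomial collapses, and the existential/strict-order content of ``eventually'' and ``no last pre-onset moment'' has no counterpart in the language. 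You correctly identify this obstacle yourself, but the proposal stops at the point where it would have to be overcome. The paper's solution is precisely the missing creative step: a \emph{two-variable} conditional, $K(\neg\Box Ex \vee \Box Ey) \to (\neg Ex \vee Ey)$, whose antecedent is an ordinary $K$-universal statement (``wherever $x$ has become static, so has $y$'') and whose consequent transfers this to the dynamic level; at the immediate successor $m'$ of $m$ along $h$, the histories-divide and expansion constraints force $\Box Ex$, hence $\Box Ey$, and no new proofs then drags $y$ back to $(m,h)$ --- an argument that needs no ``next'' operator in the syntax because discreteness is used only in the semantic reasoning.

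There is also an internal inconsistency in your sketch of the countermodel. You stipulate both that there is ``no last pre-onset moment'' and that $t$ is ``present exactly on an upward-closed set having no least element''; in a dense chain these are different (and generally incompatible) configurations, and the paper's model in fact has a \emph{last} pre-onset moment (the branching moment $a$, where $x$ lies in $Act(a,h_2)$ but not in $Act(a,h_1)$) while the onset set $(0,1)$ has no least element --- it is the absence of a \emph{first} onset moment that lets each onset moment $r$ discharge the no-new-proofs obligation by pointing to $r/2$ instead of back to $a$. Your worry that satisfying all eleven constraints simultaneously is delicate is legitimate but overstated: a two-history tree with one finite and one dense branch, trivial $Choice$, $R = \unlhd$, and total $\mathcal{E}$ does the job with only the no-new-proofs check requiring any care.
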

\begin{proof}
We clearly have the subset relation. As for the properness part,
consider the formula $K(\neg\Box Ex \vee \Box Ey) \to (\neg Ex
\vee Ey)$ with $x, y \in PVar$. We show that this formula is not
valid over the class of all unirelational jstit models (hence not
valid over the class of all jstit models either). Consider the
following unirelational model $\mathcal{M} = \langle Tree, \unlhd,
Choice, Act, R, \mathcal{E}, V\rangle$ for the community of a
single agent $j$:
\begin{itemize}
\item $Tree = \{ a, b \} \cup \{ r \in \mathbb{R} \mid 0 < r < 1
\}$;

\item $\unlhd = \{ (a,b) \} \cup \{ (a, r) \mid r \in \mathbb{R}
\cap Tree \} \cup \{ (r, r') \mid r, r' \in \mathbb{R} \cap Tree,
r \leq r' \}$;

\item $Choice^m_j = H_m$ for all $m \in Tree$;

\item $R = \unlhd$;

\item $\mathcal{E}(m, t) = Form$, for all $m \in Tree$ and $t \in
Pol$.

\item $V(p) = \emptyset$ for all $p \in Var$.
\end{itemize}
It is straightforward to see that the above-defined components of
$\mathcal{M}$ satisfy all the constraints imposed on normal jstit
models except possibly those involving $Act$. Before we go on and
define $Act$, let us pause a bit and reflect on the structure of
histories in the model $\mathcal{M}$ that is being defined. We
only have two histories in it, one is $h_1 = (a,b)$ and the other
is $h_2 = \{ a \} \cup \{ r \in \mathbb{R} \mid 0 < r < 1 \}$. So
we define:
\begin{itemize}
\item $Act(a, h_2) = \{ x \}$;

\item $Act(a, h_1) = Act(b, h_1) = \emptyset$;

\item $Act(r, h_2) = \{ x, y \}$ for all $r \in \mathbb{R} \cap
Tree$.
\end{itemize}
Again, most of the constraints on jstit models are now easily seen
to be satisfied.\footnote{Note that this model also satisfies any
possible \emph{constant specification} (as defined in Section
\ref{main}) so that introducing any such specification cannot
affect the counterexample at hand.} The no new proofs guaranteed
constraint is perhaps less straightforward, so we consider it in
some detail. We have, on the one hand, $Act(a, h_1) \cap Act(a,
h_2) = Act(b, h_1) = \emptyset$, so neither $a$, nor $b$ can
falsify the constraint. The only remaining option is that $m \in
\{ r \in \mathbb{R} \mid 0 < r < 1 \}$, say $m = r$. But then the
only history passing through $r$ is $h_2$ and we have, on the
other hand, $\frac{r}{2} \in Tree$, $\frac{r}{2} < r$, and
$Act(\frac{r}{2}, h_2) = Act(r,h_2)$ so that the no new proofs
guaranteed constraint is again verified.

Now, consider $a \in Tree$. The set of $a$'s epistemic
alternatives is $Tree$ itself. We have $\mathcal{M}, a, h_1
\not\models Ex$, therefore $\mathcal{M}, a, h_2 \models \neg\Box
Ex$, whence $\mathcal{M}, a, h_2 \models \neg\Box Ex \vee \Box
Ey$. We also have, of course, that $\mathcal{M}, a, h_1 \models
\neg\Box Ex$ and $\mathcal{M}, a, h_1 \models \neg\Box Ex \vee
\Box Ey$. In the same way, we see that $\mathcal{M}, b, h_1
\models \neg\Box Ex$ and $\mathcal{M}, b, h_1 \models \neg\Box Ex
\vee \Box Ey$

Moreover, if $r$ is a real number strictly between $0$ and $1$,
then $\mathcal{M}, r, h_2 \models Ex$, and, since $h_2$ is the
only history passing through $r$, we get also $\mathcal{M}, r, h_2
\models \Box Ex$, and, further, $\mathcal{M}, r, h_2 \models
\neg\Box Ex \vee \Box Ey$. Thus the formula $\neg\Box Ex \vee \Box
Ey$ holds at every epistemic alternative of $a$ for every history
passing through this alternative. This means that $\mathcal{M}, a,
h_2 \models K(\neg\Box Ex \vee \Box Ey)$. Besides, we have that
$\mathcal{M}, a, h_2 \models Ex \wedge \neg Ey$, so that the pair
$(a, h_2)$ falsifies the formula $K(\neg\Box Ex \vee \Box Ey) \to
(\neg Ex \vee Ey)$ in $\mathcal{M}$.

On the other hand, $K(\neg\Box Ex \vee \Box Ey) \to (\neg Ex \vee
Ey)$ is valid in the class of jstit models based on discrete time
(hence also over unirelational jstit models based on discrete
time). In order to show this, we will assume its invalidity and
obtain a contradiction. Indeed, let $\mathcal{M} = \langle Tree,
\unlhd, Choice, Act, R,R_e, \mathcal{E}, V\rangle$ be a jstit
model based on discrete time such that
$$
\mathcal{M}, m, h \not\models K(\neg\Box Ex \vee \Box Ey) \to
(\neg Ex \vee Ey).
$$
Then we will have both
\begin{equation}\label{E:discr1}
\mathcal{M}, m, h \models K(\neg\Box Ex \vee \Box Ey),
\end{equation}
and
\begin{equation}\label{E:discr2}
\mathcal{M}, m, h \models Ex \wedge \neg Ey.
\end{equation}
By \eqref{E:discr1}, we know that:
\begin{equation}\label{E:discr3}
\mathcal{M}, m, h \models \neg\Box Ex \vee \Box Ey,
\end{equation}
and, by \eqref{E:discr2}, it follows that:
\begin{equation}\label{E:discr4}
\mathcal{M}, m, h \models \neg\Box Ey.
\end{equation}
Therefore, we know by \eqref{E:discr3} that $\mathcal{M}, m, h
\models \neg\Box Ex$, so that there is an $h' \in H_m$ such that
$\mathcal{M}, m, h' \models \neg Ex$. In view of \eqref{E:discr2},
we must have $h \neq h'$, so $H_m$ cannot be a singleton. Since
histories are defined as maximal chains of moments, we know that
$H_{m'}$ is always a singleton when $m' \in Tree$ is
$\unlhd$-maximal. Therefore $m$ cannot be $\unlhd$-maximal and
thus $m$ cannot be the $\unlhd$-last moment along $h$. Since
$\mathcal{M}$ is based on discrete time, consider embedding $f$ of
$h$ into an initial segment of $\omega$. Suppose that $f(m) = n$.
Since $m$ is not the $\unlhd$-last moment along $h$, there must be
an $m' \in h$ such that $f(m') = n + 1$. Since $f$ is an
embedding, this means that $m \lhd m'$ and for no $m'' \in Tree$
it is true that $m \lhd m'' \lhd m'$. By the future always matters
constraint, we know that $R(m,m')$, therefore, by \eqref{E:discr1}
we must have:
\begin{equation}\label{E:discr5}
\mathcal{M}, m', h \models \neg\Box Ex \vee \Box Ey.
\end{equation}
On the other hand, let $g \in H_{m'}$ be arbitrary. Then, by the
absence of backward branching, $g \in H_m$, and, moreover, $g$ is
undivided from $h$ at $m$. Therefore, by the presenting a new
proof makes histories divide constraint, we must have $Act(m, g) =
Act(m,h)$. By \eqref{E:discr2} we know that $x \in Act(m,h)$,
which means that also  $x \in Act(m,g)$. Since $g \in H_{m'}$ was
chosen arbitrarily, the latter means that $x \in \bigcap_{g \in
H_{m'}}(Act(m,g))$, and, by the expansion of presented proofs
constraint, $x \in \bigcap_{g \in H_{m'}}(Act(m',g))$. This, in
turn, yields that:
\begin{equation}\label{E:discr6}
\mathcal{M}, m', h \models \Box Ex,
\end{equation}
whence, in view of \eqref{E:discr5}, it follows that
\begin{equation}\label{E:discr7}
\mathcal{M}, m', h \models \Box Ey.
\end{equation}
The latter means that $y \in \bigcap_{g \in H_{m'}}(Act(m',g))$,
and by the no new proofs guaranteed constraint, it follows that
for some $g \in H_{m'}$ and some $m'' \in g$ such that $m'' \lhd
m'$, we must have $y \in Act(m'',g)$. Now, if $m'' \lhd m'$ it
follows that $m'' \unlhd m$, since $m'$ was chosen as the
immediate $\lhd$-successor of $m$ along $h$. The latter means, by
the expansion of presented proofs, that $y \in Act(m,g)$. Since
$g$ is undivided from $h$ at $m$, this means, by the presenting a
new proof makes histories divide constraint, that $Act(m, g) =
Act(m,h)$ and, further, that $y \in Act(m,h)$. The latter is in
obvious contradiction with \eqref{E:discr2}.
\end{proof}

Proposition \ref{proposition} shows that if one wants to prove the
completeness theorem for JA-STIT by constructing a canonical
model, the histories in this model \emph{both} have to be allowed
to be infinite \emph{and} have to have a rather involved order
structure. This shows that the canonical model used in the
completeness proof that follows below, is not likely to allow for
any major simplifications.

\section{Axiomatic system and soundness}\label{axioms}

We consider the Hilbert-style axiomatic system $\Sigma$ with the
following set of axiomatic schemes:
\begin{align}
&\textup{A full set of axioms for classical propositional
logic}\label{A0}\tag{\text{A0}}\\
&\textup{$S5$ axioms for $\Box$ and $[j]$ for every $j \in
Agent$}\label{A1}\tag{\text{A1}}\\
&\Box A \to [j]A \textup{ for every }j \in Agent\label{A2}\tag{\text{A2}}\\
&(\Diamond[j_1]A_1 \wedge\ldots \wedge \Diamond[j_n]A_n) \to
\Diamond([j_1]A_1 \wedge\ldots \wedge[j_n]A_n)\label{A3}\tag{\text{A3}}\\
&(s\co(A \to B) \to (t\co A \to (s\times t)\co
B)\label{A4}\tag{\text{A4}}\\
&t\co A \to (!t\co(t\co A) \wedge KA)\label{A5}\tag{\text{A5}}\\
&(s\co A \vee t\co A) \to (s+t)\co A\label{A6}\tag{\text{A6}}\\
&\textup{$S4$ axioms for $K$}\label{A7}\tag{\text{A7}}\\
&KA \to \Box K\Box A\label{A8}\tag{\text{A8}}\\
&\Box Et \to K\Box Et\label{A9}\tag{\text{A9}}
\end{align}
The assumption is that in \eqref{A3} $j_1,\ldots, j_n$ are
pairwise different.

To this set of axiom schemes we add the following rules of
inference:
\begin{align}
&\textup{From }A, A \to B \textup{ infer } B;\label{R1}\tag{\text{R1}}\\
&\textup{From }A\textup{ infer }KA;\label{R2}\tag{\text{R2}}\\
&\textup{If $A$ is an instance of (A0)--(A9) and $c \in PConst$,
then infer $c\co A$;}\label{R3}\tag{\text{R3}}\\
&\textup{From }KA \to (\neg\Box Et_1 \vee\ldots\vee\neg\Box Et_n)\notag\\
&\qquad\qquad\textup{ infer } KA \to (\neg Et_1 \vee\ldots\vee\neg
Et_n).\label{R4}\tag{\text{R4}}
\end{align}
Rule \eqref{R3} is obviously \textbf{not} satisfied over the
general class of jstit models. However, we introduce it as an
inheritance of justification logic with its \emph{constant
specifications}. Rule \eqref{R3} gives just one example of such
constant specification, but it serves as a general case in our
situation, since the form of our completeness proof allows for a
straightforward adaptation to any other variant of constant
specification allowed for in justification logic, including the
empty constant specification which would correspond to omitting
\eqref{R3} altogether. On the other hand, should we take the empty
constant specification as our default example, it would not be
clear how to adapt the proof to accommodate non-empty constant
specification, since completeness proof for the empty
specification allows for quite a bit of shortcuts, which are not
available in the more general case. We postpone a more general
discussion of constant specifications till Section \ref{main},
confining ourselves in the meantime to the particular case given
by \eqref{R3}.

In order to adapt the scope of our completeness result to the
presence of \eqref{R3}, we call a (unirelational) jstit model
$\mathcal{M}$ \emph{normal} iff the following condition is
satisfied:
\begin{align*}
(\forall c \in PConst)(\forall m \in Tree)(\{ A \mid A&\text{ is a
substitution case of}\\
 &\text{a scheme in \eqref{A1}--\eqref{A9}}\}
\subseteq \mathcal{E}(m,c)).
\end{align*}
Our goal is now to obtain a strong completeness theorem for
$\Sigma$ w.r.t. the class of normal models. Establishing soundness
mostly reduces to a routine check that every axiom is valid and
that rules preserve validity. We treat the less obvious cases in
some detail:

\begin{theorem}\label{soundness}
Every instance of \eqref{A0}--\eqref{A9} is valid over the class
of normal jstit models. Every application of rules
\eqref{R1}--\eqref{R4} to formulas which are valid over the class
of normal jstit models yields a formula which is valid over the
class of normal jstit models.
\end{theorem}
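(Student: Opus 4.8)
The plan is to verify soundness by checking each axiom scheme and each inference rule separately, treating the routine cases briefly and concentrating effort on the clauses that interact nontrivially with the $Act$ function and the epistemic relations. Since we are working with \emph{unirelational normal} jstit models, I would first recall that in this setting $R_e$ may be replaced by $R$ throughout the constraints, and that the satisfaction clause for $t\co A$ simplifies to $\mathcal{M}, m, h \models t\co A \Leftrightarrow A \in \mathcal{E}(m,t) \,\&\, \mathcal{M}, m, h \models KA$. This simplification is what makes several of the justification-flavored axioms reduce to familiar checks.

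For the axioms I would proceed as follows. The schemes in \eqref{A0}, \eqref{A1}, \eqref{A2} and \eqref{A7} are standard: (A0) is propositional, (A1) follows because $Choice^m_j$ and $H_m$ induce equivalence relations on the relevant histories (so $\Box$ and each $[j]$ are $S5$), (A2) holds because $Choice^m_j(h) \subseteq H_m$, and (A7) holds because $R$ is a pre-order hence reflexive and transitive. Axiom \eqref{A3} is the classical independence-of-agents axiom and follows directly from the independence-of-agents constraint, using that $j_1,\ldots,j_n$ are pairwise distinct so that one may choose a witnessing history in the nonempty intersection $\bigcap f(j)$. For the justification axioms, (A4) uses evidence-closure clause (a) together with the fact that $K$ distributes over implication; (A6) uses evidence-closure clause (b); and (A5) uses evidence-closure clause (c) for the $!t\co(t\co A)$ conjunct and the simplified $t\co A$ clause for the $KA$ conjunct. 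The genuinely stit-specific epistemic axioms are \eqref{A8} and \eqref{A9}: for (A8) I would unfold $KA \to \Box K \Box A$ using $\unlhd \subseteq R$ (future always matters) together with transitivity of $R$ and no backward branching to push the $\Box$'s through; for (A9) I would use the constraint ``presented proofs are epistemically transparent'' ($R(m,m') \Rightarrow \bigcap_{h} Act(m,h) \subseteq \bigcap_{h'} Act(m',h')$), which is precisely engineered so that $\Box Et$ at $m$ forces $\Box Et$ at every $R$-successor, giving $K\Box Et$.

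For the rules, \eqref{R1} preserves validity trivially, and \eqref{R2} preserves validity because if $A$ is valid it holds at every moment-history pair, so in particular at every $R$-successor, whence $KA$ is valid. Rule \eqref{R3} is where \emph{normality} is used: the normality condition guarantees that every substitution case $A$ of \eqref{A1}--\eqref{A9} lies in $\mathcal{E}(m,c)$ for every $m$ and every constant $c$; combined with the validity of $A$ (hence of $KA$, as just argued) and the simplified $t\co A$ clause, this yields validity of $c\co A$. This is exactly the point of restricting to normal models.

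The main obstacle I expect is rule \eqref{R4}, the distinctive inference of this logic, which has no analogue in standard stit or justification logic. Here I would assume $KA \to (\neg\Box Et_1 \vee \cdots \vee \neg\Box Et_n)$ is valid and argue that $KA \to (\neg Et_1 \vee \cdots \vee \neg Et_n)$ is valid. Fix $(m,h)$ with $\mathcal{M}, m, h \models KA$ and suppose for contradiction that $\mathcal{M}, m, h \models Et_i$ for all $i$, i.e. $t_i \in Act(m,h)$ for every $i$. The strategy is to descend from $m$ to a suitable earlier moment where the $t_i$ have \emph{not yet} all become moment-determinate, then use the interplay of the constraints ``no new proofs guaranteed'', ``expansion of presented proofs'', ``presenting a new proof makes histories divide'', and the reflexivity of $R$ via ``future always matters'' to produce a moment-history pair that satisfies $KA$ yet falsifies $\neg\Box Et_1 \vee \cdots \vee \neg\Box Et_n$, contradicting the validity of the premise. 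The delicate part is the bookkeeping showing that at the relevant moment all the $\Box Et_i$ hold simultaneously while $KA$ is preserved along $R$; I would mirror the argument structure already used in the proof of Proposition \ref{proposition}, where exactly these constraints were combined to trace presented proofs backward and forward along histories. Establishing that this rule preserves validity is the crux of the soundness proof; the remaining cases are routine.
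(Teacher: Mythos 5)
Most of your case analysis is fine and matches the paper's: the stit axioms and the justification axioms reduce to the corresponding classical soundness facts, \eqref{A9} is exactly the epistemic-transparency constraint, and \eqref{R3} is where normality enters. (Two small quibbles: \eqref{A8} needs only reflexivity of $R$ --- the $K$-clause ignores the history coordinate, so no backward branching and transitivity play no role there; and you restrict attention to \emph{unirelational} models from the outset, whereas the theorem asserts validity over all normal jstit models, so you should either run the checks with $R_e$ or note explicitly that $R\subseteq R_e$ makes every step go through in the general case.)

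The genuine gap is in your treatment of \eqref{R4}, which you yourself identify as the crux and then sketch in the wrong direction. You propose to \emph{descend} from $m$ to an earlier moment ``where the $t_i$ have not yet all become moment-determinate.'' But to contradict the validity of the premise you must exhibit a pair satisfying $KA$ together with \emph{all} of $\Box Et_1,\ldots,\Box Et_n$, and neither ingredient is available in the past: $Act$ only shrinks as you move backward along a history (expansion of presented proofs), and $KA$ propagates \emph{forward} along $R\supseteq\unlhd$, not backward. The correct move is the opposite one. From $\mathcal{M},m,h\models KA\wedge Et_1\wedge\ldots\wedge Et_n$ and the validity of the premise you get some $h'\in H_m$ with $\mathcal{M},m,h'\not\models Et_k$; hence $h\neq h'$, so $H_m$ is not a singleton, so $m$ is not $\unlhd$-maximal and one can pick $m'\rhd m$ with $h\in H_{m'}$. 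Every $g\in H_{m'}$ is undivided from $h$ at $m$, so ``presenting a new proof makes histories divide'' gives $t_1,\ldots,t_n\in Act(m,g)$ for all such $g$, and ``expansion of presented proofs'' pushes these into $\bigcap_{g\in H_{m'}}Act(m',g)$, yielding $\mathcal{M},m',h\models\Box Et_1\wedge\ldots\wedge\Box Et_n$; ``future always matters'' gives $R(m,m')$ and hence $\mathcal{M},m',h\models KA$, contradicting the premise at $(m',h)$. The constraint ``no new proofs guaranteed,'' which you list among your tools here, is not needed for this rule. As written, your plan for \eqref{R4} would not close, so the soundness proof is incomplete at its most important point.
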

\begin{proof}
First, note that if $\mathcal{M} = \langle Tree, \unlhd, Choice,
Act, R, R_e, \mathcal{E}, V\rangle$ is a normal jstit model, then
$\langle Tree, \unlhd, Choice, V\rangle$ is a model of stit logic.
Therefore, axioms \eqref{A0}--\eqref{A3}, which were copy-pasted
from the standard axiomatization of \emph{dstit}
logic\footnote{See, e.g. \cite[Ch. 17]{belnap2001facing}, although
$\Sigma$ uses a simpler format closer to that given in
\cite[Section 2.3]{balbiani}.} must be valid. Second, note that if
$\mathcal{M} = \langle Tree, \unlhd, Choice, Act, R, R_e,
\mathcal{E}, V\rangle$ is a normal jstit model, then $\mathcal{M}
= \langle Tree, R, R_e, \mathcal{E}, V\rangle$ is what is called
in \cite[Section 6]{ArtemovN05} a justification model with the
form of constant specification defined by \eqref{R3}\footnote{The
format for the variable assignment $V$ is slightly different, but
this is of no consequence for the present setting.}. This means
that also all of the \eqref{A4}--\eqref{A7} must be valid, whereas
\eqref{R1}--\eqref{R3} must preserve validity, given that all
these parts of our axiomatic system were borrowed from the
standard axiomatization of justification logic . The validity of
other parts of $\Sigma$ will be motivated below in some detail. In
what follows, $\mathcal{M} = \langle Tree, \unlhd, Choice, Act, R,
R_e,\mathcal{E}, V\rangle$ will always stand for an arbitrary
normal jstit model, and $(m,h)$ for an arbitrary element of
$MH(\mathcal{M})$.

As for \eqref{A8}, assume for \emph{reductio} that $\mathcal{M},
m,h \models KA \wedge \Diamond K \Diamond\neg A$. Then
$\mathcal{M}, m,h \models KA$ and also $\mathcal{M}, m,h' \models
K \Diamond\neg A$ for some $h' \in H_m$. By reflexivity of $R$, it
follows that $\Diamond\neg A$ will be satisfied at $(m,h)$ in
$\mathcal{M}$. The latter means that, for some $h'' \in H_m$, $A$
must fail at $(m,h'')$ and therefore, again by reflexivity of $R$,
$KA$ must fail at $(m,h)$ in $\mathcal{M}$, a contradiction.

We consider next \eqref{A9}. If $\Box Et$ is true at $(m,h)$ in
$\mathcal{M}$, then, by definition,

\noindent$t \in \bigcap_{h \in H_m}Act(m,h)$. Now, if $m' \in
Tree$ is such that $R(m,m')$, then, by epistemic transparency of
presented proofs constraint, we must have $t \in \bigcap_{h' \in
H_{m'}}Act(m',h')$ so that for every $g \in H_{m'}$ we will have
$\mathcal{M},m',g \models \Box Et$. Therefore, we must have
$\mathcal{M},m,h \models K\Box Et$ as well.

It only remains to show that \eqref{R4} preserves validity over
normal jstit models. Assume that $KA \to (\neg\Box Et_1
\vee\ldots\vee\neg\Box Et_n)$ is valid over normal jstit models,
and assume also that we have:
\begin{equation}\label{E:e1}
\mathcal{M}, m, h \models KA \wedge Et_1 \wedge\ldots\wedge
Et_n.
\end{equation}
Whence, by the assumed validity, we know that also:
$$
\mathcal{M}, m, h \models \neg\Box Et_1 \vee\ldots\vee\neg\Box
Et_n,
$$
therefore, we can choose a natural $k$ such that $1 \leq k \leq n$
and $\mathcal{M}, m, h \models \neg\Box Et_k$. The latter, in
turn, means that for some $h' \in H_m$ we have that:
\begin{equation}\label{E:e2}
\mathcal{M}, m, h' \models \neg Et_k.
\end{equation}
Comparison between \eqref{E:e1} and \eqref{E:e2} shows that $h
\neq h'$. Therefore, we know that $H_m$ is not a singleton, which
means that $m$ cannot be a $\unlhd$-maximal moment in $Tree$ and
we can choose an $m' \in Tree$ such that $h \in H_{m'}$ and $m'
\rhd m$. By \eqref{E:e1} we know that $t_1,\ldots, t_n \in Act(m,
h)$ and we know that every $g \in H_{m'}$ is undivided from $h$ at
$m$. Therefore, by the presenting a new proof makes histories
divide constraint, we get that $t_1,\ldots, t_n \in Act(m, g)$ for
all $g \in H_{m'}$, hence, by the expansion of presented proofs
constraint, we get that $t_1,\ldots, t_n \in \bigcap_{g \in
H_{m'}}Act(m',g)$. This means that we have, on the one hand:
\begin{equation}\label{E:e3}
\mathcal{M}, m', h \models  \Box Et_1 \wedge\ldots\wedge \Box
Et_n.
\end{equation}
And, on the other, hand, we know that by the future always matters
constraint, we have $R(m,m')$, which also means that, by
\eqref{E:e1} we get that:
\begin{equation}\label{E:e4}
\mathcal{M}, m', h \models KA.
\end{equation}
Taken together, \eqref{E:e3} and \eqref{E:e4} contradict the validity
of $KA \to (\neg\Box Et_1 \vee,\ldots, \vee\neg\Box Et_n)$.
\end{proof}

We then define a \emph{proof} in $\Sigma$ as a finite sequence of
formulas such that every formula in it is either an axiom or is
obtained from earlier elements of the sequence by one of inference
rules. A proof is a proof of its last formula. If an $A \in Form$
is provable in our system, we will write $\vdash_\Sigma A$.
However, since we will not be considering any axiomatic systems
different from $\Sigma$ until Section \ref{main}, the subscript to
$\vdash$ will be suppressed. Similarly, we will simply speak of
consistency and inconsistency meaning consistency and
inconsistency \emph{relative to} $\Sigma$.

The presence of \eqref{R4} in $\Sigma$ complicates the issue of
finding the right notion of an inference from premises and the
right format for Deduction Theorem. Therefore, we cannot just
define that a set $\Gamma \subseteq Form$ is inconsistent iff
$\bot$ is derivable from $\Gamma$. We have to take a little detour
and say that $\Gamma \subseteq Form$ is \emph{inconsistent} iff
for some $A_1,\ldots,A_n \in \Gamma$ we have $\vdash (A_1
\wedge\ldots \wedge A_n) \to \bot$, and we say that $\Gamma$ is
consistent iff it is not inconsistent. $\Gamma$ is
\emph{maxiconsistent} iff it is consistent and no consistent
subset of $Form$ properly extends $\Gamma$.

Even with this slightly non-standard definition of inconsistency,
we can still do many familiar things, e.g. extend consistent sets
with new formulas and eventually make them maxiconsistent. More
precisely, the following lemma holds:

\begin{lemma}\label{elementaryconsistency}
Let $\Gamma \subseteq Form$ be consistent, and let $A, B \in
Form$. Then:
\begin{enumerate}
\item There exists a $\Delta \subseteq Form$ such that $\Delta$ is
maxiconsistent and $\Gamma \subseteq \Delta$.

\item If $\Gamma$ is maxiconsistent, then exactly one element of
$\{A, \neg A \}$ is in $\Gamma$.

\item If $\Gamma$ is maxiconsistent, then $A \vee B \in \Gamma$
iff $(A \in \Gamma$ or $B \in \Gamma)$.

\item If $\Gamma$ is maxiconsistent and $A, (A \to B) \in \Gamma$,
then $B \in \Gamma$.

\item If $\Gamma$ is maxiconsistent, then $A \wedge B \in \Gamma$
iff $(A \in \Gamma$ and $B \in \Gamma)$.
\end{enumerate}
\end{lemma}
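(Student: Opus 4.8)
The plan is to dispatch all five parts by purely propositional bookkeeping, exploiting the fact that the non-standard definition of inconsistency refers only to $\vdash$ applied to finite conjunctions of members of $\Gamma$. The crucial observation is that $\Gamma$ is consistent precisely when no finite conjunction $A_1 \wedge\ldots\wedge A_n$ of its members satisfies $\vdash (A_1 \wedge\ldots\wedge A_n) \to \bot$; equivalently, when every such conjunction is a consistent single formula. Since inconsistency is witnessed finitely and through the theoremhood relation $\vdash$ alone, the rules \eqref{R2}--\eqref{R4}, which transform theorems rather than arbitrary consequences of $\Gamma$, never need to be applied ``under the hypotheses'' collected in $\Gamma$. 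This is exactly what lets the familiar Lindenbaum machinery go through despite the presence of these rules.

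For part 1 I would enumerate $Form$ as $B_0, B_1, \ldots$, set $\Gamma_0 = \Gamma$, and at stage $n$ put $\Gamma_{n+1} = \Gamma_n \cup \{B_n\}$ if the latter is consistent and $\Gamma_{n+1} = \Gamma_n$ otherwise, finally taking $\Delta = \bigcup_n \Gamma_n$. Each $\Gamma_n$ is consistent by construction, and $\Delta$ is consistent because any witness to its inconsistency would be a finite conjunction of members already lying in some single $\Gamma_N$, contradicting the consistency of $\Gamma_N$. Maximality follows by the same finiteness: if some consistent $\Delta'$ properly extended $\Delta$, pick $B \in \Delta' \setminus \Delta$; at the stage where $B$ was tested it must have been rejected, so a finite subset of $\Gamma_n \cup \{B\} \subseteq \Delta'$ already witnesses inconsistency, contradicting the consistency of $\Delta'$.

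For parts 2--5 I would first record that every theorem belongs to any maxiconsistent $\Gamma$: if $\vdash T$ but $T \notin \Gamma$, then by maximality $\Gamma \cup \{T\}$ is inconsistent, yielding a finite conjunction $C$ of members of $\Gamma$ with $\vdash (T \wedge C) \to \bot$; since $\vdash T$, this gives $\vdash C \to \bot$, contradicting consistency of $\Gamma$. Part 2 is then the standard argument: $A$ and $\neg A$ cannot both lie in $\Gamma$ since $\vdash (A \wedge \neg A) \to \bot$; and if neither did, maximality would supply finite conjunctions $C, D$ of members of $\Gamma$ with $\vdash (A \wedge C) \to \bot$ and $\vdash (\neg A \wedge D) \to \bot$, whence propositionally $\vdash C \to \neg A$ and $\vdash D \to A$, so $\vdash (C \wedge D) \to \bot$, contradicting consistency. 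Parts 3, 4 and 5 reduce to part 2 together with the appropriate tautologies fed through the inconsistency definition: for part 4, if $A, A\to B \in \Gamma$ but $B \notin \Gamma$, then $\neg B \in \Gamma$ and $\vdash (A \wedge (A \to B) \wedge \neg B) \to \bot$; parts 3 and 5 are handled identically using $\vdash (A \wedge \neg(A \vee B)) \to \bot$, the closure of $\Gamma$ under theorems such as $A \to (A \vee B)$ and $(A \wedge B) \to A$, and $\vdash (A \wedge B \wedge \neg(A \wedge B)) \to \bot$.

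The only point requiring genuine care is to keep every manipulation strictly inside the $\vdash$-based definition of inconsistency: in both the union argument of part 1 and the splicing of finite witnesses in part 2, each combination must yield a provable implication whose antecedent is a Boolean combination of finitely many members of $\Gamma$, never a derivation that smuggles in an application of \eqref{R2}--\eqref{R4} to those members. Once this discipline is maintained, the non-propositional rules play no role, since they never act on the hypotheses, and the lemma follows by essentially the same reasoning as the classical Lindenbaum construction.
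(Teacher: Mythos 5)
Your proposal is correct and follows essentially the same route as the paper's own proof: the standard Lindenbaum construction for Part 1 (with consistency of the union and maximality both argued via finite witnesses landing in a single stage $\Gamma_n$), and for Parts 2--5 the splicing of finite provable-implication witnesses through the $\vdash$-based definition of inconsistency, using Part 2 to import negations of absent formulas. Your explicit preliminary remark that maxiconsistent sets contain all theorems, and your framing of why rules \eqref{R2}--\eqref{R4} never act on the hypotheses, are slight elaborations of points the paper leaves implicit, but they do not change the argument.
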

\begin{proof} (Part 1) Just as in the standard case, we enumerate the
elements of $Form$ as $A_1,\ldots, A_n,\ldots$ and form the
sequence of sets $\Gamma_1,\ldots, \Gamma_n,\ldots,$ such that
$\Gamma_1 := \Gamma$ and for every natural $i \geq 1$:
\begin{align*}
    \Gamma_{i + 1} :=
    \left\{%
\begin{array}{ll}
    \Gamma_i, & \hbox{ if $\Gamma_i \cup \{ A_ i \}$ is inconsistent;} \\
    \Gamma_i \cup \{ A_ i \}, & \hbox{ otherwise.} \\
\end{array}%
\right.
\end{align*}
We now define $\Delta := \bigcup_{i \geq 1}\Gamma_i$. Of course,
we have $\Gamma \subseteq \Delta$, and, moreover, $\Delta$ is
maxiconsistent. To see this, note that for every $i \geq 1$ the
set $\Gamma_i$ is consistent by construction. Now, if $\Delta$ is
inconsistent, then there must be a valid implication from a finite
conjunction of formulas in $\Delta$ to $\bot$. These formulas must
be mentioned in our numeration of $Form$ so that the valid
implication in question can presented as $\vdash (A_{i_1}
\wedge\ldots \wedge A_{i_n}) \to \bot$ for appropriate natural
$i_1,\ldots, i_n$. Since all of $A_{i_1}, \ldots, A_{i_n}$ are in
$\Delta$, we must have, by the construction of $\Gamma_1,\ldots,
\Gamma_n,\ldots,$ that $A_{i_1}, \ldots, A_{i_n} \in
\Gamma_{max(i_1,\ldots, i_n)}$. But then this latter set must be
inconsistent which contradicts our construction.

Further, if some consistent $\Xi \subseteq Form$ is such that
$\Delta \subset \Xi$, then let $A_n \in \Xi \setminus \Delta$. We
must have then $\Gamma_n \cup \{ A_n \}$ inconsistent, but we also
have $\Gamma_n \cup \{ A_n \} \subseteq \Xi$, which implies
inconsistency of $\Xi$, in contradiction to our assumptions.
Therefore, $\Delta$ is not only consistent, but also
maxiconsistent.

(Part 2) We cannot have both $A$ and $\neg A$ in $\Gamma$, since
we have, of course, $\vdash (A \wedge \neg A) \to \bot$. If, on
the other hand, neither $A$, nor $\neg A$ is in $\Gamma$, then
both $\Gamma \cup \{ A \}$ and $\Gamma \cup \{ \neg A \}$ must be
inconsistent, so that for some $B_1, \ldots, B_n \in \Gamma$ we
will have:
$$
\vdash (B_1\wedge \ldots\wedge B_n \wedge A) \to \bot,
$$
whereas for some $C_1, \ldots, C_k \in \Gamma$ we will have:
$$
\vdash (C_1\wedge \ldots\wedge C_k \wedge \neg A) \to \bot,
$$
whence we get, using \eqref{A0} and \eqref{R1}:
$$
\vdash (C_1\wedge \ldots\wedge C_k) \to A,
$$
and further:
$$
\vdash (B_1\wedge \ldots\wedge B_n \wedge C_1\wedge \ldots\wedge
C_k) \to \bot,
$$
so that $\Gamma$ turns out to be inconsistent, contrary to our
assumptions.

(Part 3) Assume $(A \vee B) \in \Gamma$. If neither $A$ nor $B$
are in $\Gamma$, then, by Part 2, both $\neg A$ and $\neg B$ are
in $\Gamma$. Using \eqref{A0} and \eqref{R1} we get that:
$$
\vdash ((A \vee B) \wedge \neg A \wedge \neg B) \to \bot,
$$
showing that $\Gamma$ is inconsistent, contrary to our
assumptions. In the other direction, if, say $A \in \Gamma$ and
$(A \vee B) \notin \Gamma$, then, by Part 2, we must have $\neg(A
\vee B) \in \Gamma$. Using \eqref{A0} and \eqref{R1} we get that:
$$
\vdash (\neg(A \vee B) \wedge A) \to \bot,
$$
showing, again, that $\Gamma$ is inconsistent, contrary to our
assumptions. The case when $B \in \Gamma$ is similar.

Parts 4 and 5 are similar to Part 3.
\end{proof}

We are now prepared to formulate our main result:

\begin{theorem}\label{completeness}
Let $\Gamma \subseteq Form$. Then $\Gamma$ is consistent iff it is
satisfiable in a normal (unirelational) jstit model.
\end{theorem}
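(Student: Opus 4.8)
The proof is the two directions of the biconditional, and only one of them is substantial. From satisfiability to consistency is the soundness half: if $\mathcal{M}, m, h \models \Gamma$ for a normal jstit model $\mathcal{M}$, then $\Gamma$ must be consistent, since otherwise there would be $A_1,\ldots,A_n \in \Gamma$ with $\vdash (A_1 \wedge\ldots\wedge A_n) \to \bot$; by Theorem \ref{soundness} this implication is valid, yet $A_1 \wedge\ldots\wedge A_n$ holds at $(m,h)$, forcing $\bot$ there. The real work is the converse, and the plan is the Henkin strategy: build a single \emph{canonical} normal jstit model $\mathcal{M}^c$ whose moment-history pairs are labelled by maxiconsistent sets, together with a truth lemma stating that a formula is satisfied at a pair exactly when it lies in the labelling set. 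Given a consistent $\Gamma$, I extend it to a maxiconsistent $\Delta \supseteq \Gamma$ by Lemma \ref{elementaryconsistency}(1), locate a pair labelled by $\Delta$, and read off satisfaction of $\Gamma$ from the truth lemma.

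The bulk of the argument is the construction of $\mathcal{M}^c$. As Proposition \ref{proposition} warns, one cannot take moments to be maxiconsistent sets outright: the histories must be infinite and carry a rich order type, so the tree has to be produced by an inductive, stage-by-stage unwinding. I would first isolate the \emph{local} (single-moment) structure from the stit and epistemic modalities in the usual way. Within one moment the histories correspond to maxiconsistent sets sharing the same $\Box$-theory $\{A : \Box A \in w\}$ (here \eqref{A1} makes $\Box$ an $S5$ modality, so this is an equivalence class); the cells $Choice^m_j$ are the classes of the canonical $[j]$-relation, which refine the $\Box$-class by \eqref{A2}; and the independence-of-agents constraint is secured by \eqref{A3}, exactly as in the canonical treatment of \emph{dstit}. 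The epistemic relation is the canonical $S4$ relation for $K$ (with $R_e = R$, so that $\mathcal{M}^c$ is unirelational), and the evidence function is $\mathcal{E}(m,t) = \{A : t\co A \in w\}$ for the sets $w$ sitting at $m$ (these agree on $t\co A$, a moment-determinate formula). Closure and normality of $\mathcal{E}$ then follow from \eqref{A4}--\eqref{A6} and \eqref{R3}, and monotonicity from the derivable principle $t\co A \to K(t\co A)$ (via \eqref{A5}) together with the definition of $R$. The valuation is the obvious $V(p) = \{(m,h) : p \in w_{(m,h)}\}$.

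The genuinely new and delicate components are the temporal order $\unlhd$ and the presentation function $Act$, and their coherence with the $Et$-modality. I would put $Act(m,h) = \{t : Et \in w_{(m,h)}\}$, so that the $\Box$-clause of the truth lemma matches $\Box Et$ with $\Box Et \in w$. The tree then has to be grown so that all of the $Act$-constraints hold at once: proofs must accumulate along histories (expansion of presented proofs), a settled proof must trace back to an earlier presentation (no new proofs guaranteed), presenting a not-yet-settled proof must force the carrying histories to divide, and settled proofs must be epistemically transparent. I expect to build the moments at successive temporal levels, choosing at each step the next labels among the $R$-successors compatible with $\unlhd \subseteq R$ (with \eqref{A8} and \eqref{A9} guaranteeing that epistemic and settled-presentation information propagates forward correctly), while duplicating and branching histories so that every $\Diamond$- and $\neg\Box Et$-demand recorded in a label is witnessed by an actual history through the moment.

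The step I expect to be the main obstacle is precisely this simultaneous satisfaction of the $Act$-constraints, in particular reconciling \emph{no new proofs guaranteed} with \emph{presenting a new proof makes histories divide} while keeping histories infinite and backward-linear. This is where rule \eqref{R4} earns its keep: it is the syntactic counterpart of the semantic fact used in its own soundness proof, namely that a bundle of proofs actually presented but not yet collectively settled must become settled, and hence known, at an appropriate future moment. Consequently \eqref{R4} supplies exactly the witnessing past and future moments the stage construction needs without ever generating an inconsistent label. Verifying this, and that the resulting $\unlhd$ is a historically connected tree with infinite histories, is the crux; once $\mathcal{M}^c$ is in hand the truth lemma is a routine simultaneous induction on formula complexity, the modal cases for $[j]$, $\Box$, $K$, $t\co A$ and $Et$ being handled by the canonical definitions above, and the theorem follows as described.
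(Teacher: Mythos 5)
Your overall architecture is the paper's: soundness gives one direction, and for the converse you extend $\Gamma$ to a maxiconsistent $\Delta$, build a single universal canonical (unirelational) model, and invoke a truth lemma. The local ingredients you list ($\Box$-classes for $H_m$, canonical $[j]$-cells with \eqref{A3} for independence, the canonical $S4$ relation for $K$ with $R_e = R$, $\mathcal{E}(m,t)$ and $Act(m,h)$ read off from $t\co A$ and $Et$) all match the paper. But there is a genuine gap: you explicitly defer the one step that is the actual content of the proof, namely the construction of $\unlhd$ and the verification of the four $Act$-constraints, and the shape you sketch for it would not work. A ``stage-by-stage unwinding at successive temporal levels'' produces discretely ordered histories, and Proposition \ref{proposition} shows the logic is \emph{not} complete over discrete time; in particular, \emph{no new proofs guaranteed} fails at the first level where a proof becomes settled, since there is then no strictly earlier moment at which it was presented. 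The paper's resolution is structural and is absent from your plan: moments are pairs $([\xi]_\equiv, k)$ where $\xi$ is a finite sequence of maxiconsistent sets (an ``element'') and $k \in \omega$ is a \emph{height} ordered in reverse, so that each core carries an $\omega^\ast$-chain of copies with identical $Act$-values and every history has order type $(0)\oplus(\omega^\ast \otimes \omega)$. A settled proof at $(\overrightarrow{m},k)$ is then witnessed at $(\overrightarrow{m},k+1)$, which is strictly earlier, and histories branch only at height $0$, which is also what makes \emph{presenting a new proof makes histories divide} come out true (the paper additionally flattens $Act$ to an intersection at heights $>0$, a detail your uniform $Act(m,h)=\{t : Et \in w_{(m,h)}\}$ misses).

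The second missing piece is the precise job of \eqref{R4}. Your intuition about it is right, but what is actually needed is an extension lemma (Lemma \ref{elementcontinuation}): the sequences used as moment-cores must satisfy $KA \in \Gamma_n \Rightarrow KA \in \Gamma_{n+1}$ and $Et \in \Gamma_n \Rightarrow \Box Et \in \Gamma_{n+1}$, and one must prove that $\{KA \mid KA \in \Gamma_n\} \cup \{\Box Et \mid Et \in \Gamma_n\}$ is always consistent so that every core extends and every history is infinite; \eqref{R4} (together with \eqref{A7}) is used exactly here, to convert a putative refutation $\vdash K(B_1\wedge\ldots\wedge B_r) \to (\neg\Box Et_1 \vee\ldots\vee \neg\Box Et_u)$ into $\vdash K(B_1\wedge\ldots\wedge B_r) \to (\neg Et_1 \vee\ldots\vee \neg Et_u)$, contradicting consistency of $\Gamma_n$. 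Without this lemma and without the height machinery, your truth lemma (which, note, the paper proves only at moments of height $0$, where the target pair for $\Delta$ must be placed) has no model to be proved about. So the proposal is a correct plan with the hardest third of the proof still to be supplied.
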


The rest of the paper is mainly concerned with proving Theorem
\ref{completeness}. One part of it we have, of course, right away,
as a consequence of Theorem \ref{soundness}:

\begin{corollary}\label{c-soundness}
If $\Gamma \subseteq Form$ is satisfiable in a normal
(unirelational) jstit model, then $\Gamma$ is consistent.
\end{corollary}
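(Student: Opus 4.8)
The plan is to argue by contraposition, deriving consistency from the soundness already established in Theorem \ref{soundness}. First I would upgrade Theorem \ref{soundness} from its local form (individual axioms are valid, single rule applications preserve validity) to the global statement that $\vdash A$ implies $A$ is valid over the class of normal jstit models. This I would obtain by a routine induction on the length of a proof of $A$ in $\Sigma$: in the base case $A$ is an instance of one of \eqref{A0}--\eqref{A9} and validity is immediate from the first half of Theorem \ref{soundness}; in the induction step $A$ is obtained from earlier, already-valid formulas by one of \eqref{R1}--\eqref{R4}, and the second half of Theorem \ref{soundness} delivers the validity of $A$.

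With this soundness lemma available, I would assume $\Gamma$ is inconsistent and show that it is unsatisfiable over normal jstit models. By the definition of inconsistency adopted in the text, there are $A_1, \ldots, A_n \in \Gamma$ with $\vdash (A_1 \wedge \ldots \wedge A_n) \to \bot$, and the soundness lemma then makes $(A_1 \wedge \ldots \wedge A_n) \to \bot$ valid over the class of normal jstit models.

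To finish, I would suppose for contradiction that $\Gamma$ were satisfiable in some normal jstit model $\mathcal{M}$, say with $\mathcal{M}, m, h \models A$ for every $A \in \Gamma$ at some $(m,h) \in MH(\mathcal{M})$. Then each $A_i$ holds at $(m,h)$, hence so does the conjunction $A_1 \wedge \ldots \wedge A_n$ by the standard clause for $\wedge$; validity of the implication together with semantic modus ponens would force $\mathcal{M}, m, h \models \bot$, which is impossible since $\bot$ is never satisfied. Thus no normal jstit model satisfies $\Gamma$, and the contrapositive yields the corollary.

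I expect no genuine obstacle here, since this is the standard soundness half of a completeness theorem. The only point needing any care is the first step, where the local statement of Theorem \ref{soundness} must be lifted to all theorems by induction on proof length. It is worth noting that the non-standard, finite-conjunction formulation of inconsistency forced by rule \eqref{R4} actually simplifies the argument, since I need only the validity of a single provable implication and no deduction-theorem apparatus.
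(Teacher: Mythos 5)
Your proposal is correct and follows essentially the same route as the paper: both arguments take the finitely many $A_1,\ldots,A_n \in \Gamma$ witnessing inconsistency, use Theorem \ref{soundness} to conclude that $(A_1 \wedge\ldots\wedge A_n) \to \bot$ holds at the satisfying moment-history pair, and derive the impossible $\mathcal{M},m,h \models \bot$. The only differences are cosmetic (contrapositive phrasing, and your explicit remark that the rule-preservation form of Theorem \ref{soundness} must be lifted to all theorems by induction on proof length, a step the paper leaves implicit).
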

\begin{proof}
Let $\Gamma \subseteq Form$ be satisfiable in a normal jstit model
so that we have, say $\mathcal{M}, m, h \models \Gamma$ for some
$(m,h) \in MH(\mathcal{M})$. If $\Gamma$ were inconsistent this
would mean that for some $A_1,\ldots,A_n \in \Gamma$ we would have
$\vdash (A_1 \wedge\ldots \wedge A_n) \to \bot$. By Theorem
\ref{soundness}, this would mean that:
$$
\mathcal{M}, m, h \models (A_1 \wedge\ldots \wedge A_n) \to \bot,
$$
whence clearly $\mathcal{M}, m, h \models \bot$, which is
impossible. Therefore, $\Gamma$ must be consistent.

Further, if $\Gamma \subseteq Form$ is satisfiable in a normal
unirelational jstit model, then $\Gamma$ must be satisfiable in a
normal jstit model. Hence $\Gamma$ must be consistent by the above
reasoning.
\end{proof}

Before we move further, we mention some theorems in the above
axiom system to be used later in the proof of the main result:
\begin{lemma}\label{theorems}
The following holds for every $A \in Form$, $t \in Pol$, $x \in
PVar$, and $j \in Ag$:
\begin{enumerate}
\item $\vdash t\co A \to \Box t\co A$;

\item  $\vdash KA \to \Box KA$.
\end{enumerate}
\end{lemma}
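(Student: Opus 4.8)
The plan is to reduce both parts to a single auxiliary scheme, $\vdash KA \to \Box A$, and then feed this scheme back through the epistemic and justification axioms. Establishing this scheme first is the natural entry point: starting from \eqref{A8}, $KA \to \Box K\Box A$, I would chain it with two reflexivity (T) axioms already contained in the system. The T-scheme for $\Box$ (an instance of \eqref{A1}) gives $\Box K\Box A \to K\Box A$, and the T-scheme for $K$ (an instance of \eqref{A7}) gives $K\Box A \to \Box A$. Composing these three implications by propositional logic (\eqref{A0} and \eqref{R1}) yields $KA \to \Box A$.

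For Part 2, $\vdash KA \to \Box KA$, I would invoke positive introspection (the 4 axiom) for $K$ from \eqref{A7}, i.e.\ $KA \to KKA$, together with the instance of the auxiliary scheme obtained by substituting $KA$ for $A$, namely $KKA \to \Box KA$. Propositional composition of these two implications delivers $KA \to \Box KA$ at once.

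For Part 1, $\vdash t\co A \to \Box t\co A$, the crucial move is to apply \eqref{A5} twice. The first conjunct of its consequent, taken on $t$ and $A$, gives $t\co A \to {!t}\co(t\co A)$; applying \eqref{A5} again, this time to the polynomial $!t$ and the formula $t\co A$, the second (epistemic) conjunct of its consequent gives ${!t}\co(t\co A) \to K(t\co A)$. Chaining these produces $t\co A \to K(t\co A)$, and then the instance $K(t\co A) \to \Box(t\co A)$ of the auxiliary scheme completes the derivation.

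The routine part throughout is the propositional bookkeeping needed to chain the implications. The only genuinely non-obvious points are that \eqref{A8} collapses to $KA \to \Box A$ as soon as both T-axioms are brought to bear, and that iterating the proof-checker clause of \eqref{A5} converts the justification modality into the epistemic modality $K(t\co A)$, to which the auxiliary scheme then applies. I expect the latter step --- recognizing that the $!t$-conjunct of \eqref{A5} must itself be re-fed into \eqref{A5} --- to be the main obstacle.
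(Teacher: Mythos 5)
Your proposal is correct. For Part 1 it coincides with the paper's own derivation: the paper chains $t\co A \to {!t}\co t\co A \to Kt\co A \to \Box K\Box t\co A \to K\Box t\co A \to \Box t\co A$, i.e.\ \eqref{A5} twice, then \eqref{A8}, then the T-axioms from \eqref{A1} and \eqref{A7} --- exactly your steps, with your auxiliary scheme $KA \to \Box A$ simply packaging the last three implications. The only real divergence is Part 2. The paper obtains it from \eqref{A8} together with the claim $\vdash \Box K\Box A \to \Box KA$, justified by ``S5 properties of $\Box$ and S4 properties of $K$''; making that claim formal requires putting $K\Box A \to KA$ under a $\Box$, i.e.\ monotonicity (hence necessitation) for $\Box$, which is not a primitive rule of $\Sigma$ (only $K$-necessitation \eqref{R2} is) and must itself first be derived. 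Your route --- positive introspection $KA \to KKA$ from \eqref{A7} followed by the instance $KKA \to \Box KA$ of the auxiliary scheme --- uses nothing beyond axiom instances, \eqref{A0}, and \eqref{R1}, so it is somewhat more self-contained and makes the propositional bookkeeping fully explicit, at the modest price of invoking the 4-axiom for $K$, which the paper's version does not need.
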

\begin{proof}
(Part 1) We have:
\begin{align*}
    t\co A &\to !t\co t\co A&&\textup{(by \eqref{A5})}\\
    &\to Kt\co A&&\textup{(by \eqref{A5})}\\
    &\to \Box K\Box t\co A&&\textup{(by \eqref{A8})}\\
    &\to K\Box t\co A&&\textup{(by \eqref{A1})}\\
    &\to \Box t\co A&&\textup{(by \eqref{A7})}
\end{align*}
 Our theorem follows then by transitivity of implication.

(Part 2). By S5 properties of $\Box$ and S4 properties of $K$, we
clearly have

\noindent$\vdash \Box K\Box A \to \Box KA$. Part 2 follows then by
\eqref{A8} and transitivity of implication.
\end{proof}

\section{The canonical model}\label{canonicalmodel}

The main aim of the present section is to prove the inverse of
Corollary \ref{c-soundness}. The method used is a variant of the
canonical model technique, but, due to the complexity of the case,
we do not define our model in one full sweep. Rather, we proceed
piecewise, defining elements of the model one by one, and checking
the relevant constraints as soon, as we have got enough parts of
the model in place. The last subsection proves the truth lemma for
the defined model. As we have already indicated, the model to be
built will be a normal unirelational jstit model, so that $R_e$
will be omitted, or, equivalently, assumed to coincide with $R$.

The ultimate building blocks of $\mathcal{M}$ we will call
\emph{elements}. Before going on with the definition of
$\mathcal{M}$, we define what these elements are and explore some
of their properties.

\begin{definition}\label{element}
An \emph{element} is a sequence of the form
$(\Gamma_1,\ldots,\Gamma_n)$ for some $n\in \omega$ with $n \geq
1$ such that:
\begin{itemize}
\item For every $k \leq n$, $\Gamma_k$ is maxiconsistent;

\item For every $k < n$, for all $A \in Form$, if $KA \in
\Gamma_k$, then $KA \in \Gamma_{k + 1}$;

\item For every $k < n$, for all $t \in Pol$, if $Et \in
\Gamma_k$, then $\Box Et \in \Gamma_{k + 1}$.
\end{itemize}
\end{definition}

We prove the following lemma:
\begin{lemma}\label{elementcontinuation}
Whenever $(\Gamma_1,\ldots,\Gamma_n)$ is an element, there exists
a $\Gamma_{n + 1} \subseteq Form$ such that the sequence
$(\Gamma_1,\ldots,\Gamma_{n + 1})$ is also an element.
\end{lemma}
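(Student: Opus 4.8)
We need to extend an element $(\Gamma_1,\ldots,\Gamma_n)$ by finding $\Gamma_{n+1}$ so that the extended sequence remains an element. Given the definition, this means I must find a maxiconsistent $\Gamma_{n+1}$ that (i) contains $KA$ whenever $KA \in \Gamma_n$, and (ii) contains $\Box Et$ whenever $Et \in \Gamma_n$. The earlier conditions (on $\Gamma_1,\ldots,\Gamma_n$) are already satisfied by hypothesis, so I only need to worry about the "transfer" conditions between $\Gamma_n$ and the new $\Gamma_{n+1}$.

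**The plan.** Let me think about this. I want to build $\Gamma_{n+1}$ from the "required" formulas. Define the set of formulas I'm forced to include:
$$\Delta = \{KA \mid KA \in \Gamma_n\} \cup \{\Box Et \mid Et \in \Gamma_n\}.$$

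The strategy: show $\Delta$ is consistent, then use Lemma~\ref{elementaryconsistency}(1) to extend it to a maxiconsistent set $\Gamma_{n+1}$. By Lemma~\ref{elementaryconsistency}(1), $\Gamma_{n+1} \supseteq \Delta$ is maxiconsistent, so both transfer conditions hold automatically (if $KA \in \Gamma_n$ then $KA \in \Delta \subseteq \Gamma_{n+1}$; similarly for $\Box Et$). The first bullet (maxiconsistency of each $\Gamma_k$) holds for $k \le n$ by assumption and for $k=n+1$ by construction. So the whole thing reduces to:

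**The crux: proving $\Delta$ is consistent.** Suppose not. Then there's a finite inconsistency, i.e., $\vdash (KA_1 \wedge \cdots \wedge KA_p \wedge \Box Et_1 \wedge \cdots \wedge \Box Et_q) \to \bot$, where each $KA_i \in \Gamma_n$ and each $Et_j \in \Gamma_n$. I need to propagate this back to an inconsistency in $\Gamma_n$, contradicting maxiconsistency of $\Gamma_n$.

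<br>

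=== PROOF PROPOSAL ===

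The plan is to construct $\Gamma_{n+1}$ by first isolating the formulas that the definition of element \emph{forces} into it, showing that this forced set is consistent, and then extending it to a maxiconsistent set via Lemma \ref{elementaryconsistency}. Concretely, I would set
\begin{align*}
\Delta = \{ KA \mid KA \in \Gamma_n \} \cup \{ \Box Et \mid Et \in \Gamma_n \}.
\end{align*}
If I can show $\Delta$ is consistent, then by Part 1 of Lemma \ref{elementaryconsistency} there is a maxiconsistent $\Gamma_{n+1} \supseteq \Delta$, and I claim $(\Gamma_1,\ldots,\Gamma_{n+1})$ is an element: each $\Gamma_k$ with $k \leq n$ is maxiconsistent by the hypothesis that we started from an element, and $\Gamma_{n+1}$ is maxiconsistent by construction; the $K$-transfer and $E$-transfer conditions for $k < n$ are inherited from the assumption, and for $k = n$ they hold because every $KA \in \Gamma_n$ gives $KA \in \Delta \subseteq \Gamma_{n+1}$, while every $Et \in \Gamma_n$ gives $\Box Et \in \Delta \subseteq \Gamma_{n+1}$. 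Thus the entire problem reduces to the consistency of $\Delta$.

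To prove $\Delta$ consistent, I would argue by contradiction and push the inconsistency back into $\Gamma_n$. Suppose $\Delta$ is inconsistent; then there are finitely many members $KA_1,\ldots,KA_p \in \Gamma_n$ and $\Box Et_1,\ldots,\Box Et_q \in \Gamma_n$ with
\begin{align*}
\vdash (KA_1 \wedge \cdots \wedge KA_p \wedge \Box Et_1 \wedge \cdots \wedge \Box Et_q) \to \bot.
\end{align*}
The key tools here are the theorems of Lemma \ref{theorems}, namely $\vdash KA \to \Box KA$ together with $\vdash \Box Et \to K \Box Et$ (which is just \eqref{A9}). Using these, I would show that the \emph{presence} of $KA_i$ and $\Box Et_j$ in $\Gamma_n$ already entails, inside $\Gamma_n$, the presence of the boxed/known versions needed to derive a contradiction. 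The cleanest route is probably to prenex the whole conjunction under a single $\Box$: since each conjunct is provably equivalent to its $\Box$-prefixed form (by Lemma \ref{theorems}, the $S5$ axioms \eqref{A1}, and the fact that $\Box$ distributes over conjunction), the derivable implication above can be upgraded to $\vdash (KA_1 \wedge \cdots \wedge \Box Et_q) \to \Box\bot$, hence to the plain falsity of the conjunction already witnessed inside $\Gamma_n$. Since $\Gamma_n$ is maxiconsistent and contains all of $KA_1,\ldots,KA_p,\Box Et_1,\ldots,\Box Et_q$, it would then contain a conjunction provably implying $\bot$, contradicting its consistency.

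The main obstacle I anticipate is the bookkeeping in the step that converts an inconsistency among the $K$- and $\Box$-formulas into an inconsistency of $\Gamma_n$ itself: one must be careful that the modal principles actually let the derivation go \emph{back} through the modalities rather than merely forward. The delicate point is that $KA \in \Gamma_n$ does not by itself put $A$ into $\Gamma_n$, so I cannot simply strip the modalities; instead I must keep everything under the $K$/$\Box$ operators and use necessitation-style reasoning (via \eqref{R2} and the $S5$/$S4$ closure of $\Box$ and $K$) to show the boxed conjunction is itself refutable, which then directly contradicts the maxiconsistency of $\Gamma_n$. Once this consistency argument is in place, the remainder is the routine verification of the three bullet points in Definition \ref{element}, which I sketched above.
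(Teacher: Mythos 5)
Your construction of $\Delta$ and the extension strategy are exactly the paper's, but your argument for the crux --- the consistency of $\Delta$ --- has a genuine gap that the tools you cite cannot close. You write that the finite inconsistency involves $KA_1,\ldots,KA_p \in \Gamma_n$ and $\Box Et_1,\ldots,\Box Et_q \in \Gamma_n$; the second membership claim is unjustified. The definition of $\Delta$ only gives $Et_j \in \Gamma_n$, and $Et \to \Box Et$ is not a theorem of $\Sigma$ (nor is it valid: $t$ may be presented on one history through a moment and absent from another). Consequently the conjunction $KA_1 \wedge \cdots \wedge KA_p \wedge \Box Et_1 \wedge \cdots \wedge \Box Et_q$ is \emph{not} witnessed inside $\Gamma_n$, and your intended contradiction with the maxiconsistency of $\Gamma_n$ never materializes. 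The manipulations you propose --- replacing conjuncts by their $\Box$-prefixed equivalents via Lemma \ref{theorems} and \eqref{A1}, or invoking \eqref{A9} and \eqref{R2} --- all push formulas further up into the modalities; none of them can strip the $\Box$ off $Et_j$, which is what is actually needed to transfer the refutation of $\Delta$ back to a refutation of finitely many members of $\Gamma_n$.

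The missing idea is rule \eqref{R4}, which exists in $\Sigma$ precisely for this step (and is the reason the paper adopts its non-standard finitary notion of inconsistency in the first place). From $\vdash (KA_1 \wedge \cdots \wedge KA_p \wedge \Box Et_1 \wedge \cdots \wedge \Box Et_q) \to \bot$ one rewrites, using \eqref{A7} to collect the $K$'s, to obtain $\vdash K(A_1 \wedge \cdots \wedge A_p) \to (\neg\Box Et_1 \vee \cdots \vee \neg\Box Et_q)$; then \eqref{R4} yields $\vdash K(A_1 \wedge \cdots \wedge A_p) \to (\neg Et_1 \vee \cdots \vee \neg Et_q)$. Since $K(A_1 \wedge \cdots \wedge A_p)$ lies in $\Gamma_n$ (by maxiconsistency and $KA_1,\ldots,KA_p \in \Gamma_n$) and $Et_1,\ldots,Et_q \in \Gamma_n$, this exhibits finitely many members of $\Gamma_n$ whose conjunction provably implies $\bot$, contradicting the consistency of $\Gamma_n$. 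This is exactly the paper's proof; without an appeal to \eqref{R4}, the purely forward-directed modal reasoning you sketch cannot establish the lemma.
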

\begin{proof}
Assume $(\Gamma_1,\ldots,\Gamma_n)$ is an element and consider the
following set:
$$
\Delta := \{ KA \mid KA \in \Gamma_n \} \cup \{ \Box Et \mid Et
\in \Gamma_n \}.
$$

We show that $\Delta$ is consistent. Of course, the set $\{ KA
\mid KA \in \Gamma_n \}$ is consistent since it is a subset of
$\Gamma_n$ and the latter is assumed to be consistent. Further, if
$\Delta$ is inconsistent, then, wlog, for some $KB_1,\ldots, KB_r,
Et_1,\ldots, Et_u \in \Gamma_n$ we will have:
$$
\vdash(KB_1\wedge\ldots \wedge KB_r) \to (\neg\Box Et_1 \vee\ldots
\vee \neg\Box Et_u),
$$
whence, by \eqref{A7}:
$$
\vdash K(B_1\wedge\ldots \wedge B_r) \to (\neg\Box Et_1 \vee\ldots
\vee \neg\Box Et_u),
$$
and further, by \eqref{R4}:
$$
\vdash K(B_1\wedge\ldots \wedge B_r) \to (\neg Et_1 \vee\ldots
\vee \neg Et_u).
$$
The latter formula shows that $\Gamma_n$ is inconsistent which
contradicts the assumption that $(\Gamma_1,\ldots,\Gamma_n)$ is an
element.

Therefore, $\Delta$ must be consistent, and, by Lemma
\ref{elementaryconsistency}.1, it is also extendable to a
maxiconsistent $\Gamma_{n + 1}$. By the choice of $\Delta$, this
means that $(\Gamma_1,\ldots,\Gamma_n, \Gamma_{n + 1})$ must be an
element.
\end{proof}
The structure of elements will be important in what follows. If
$\xi = (\Gamma_1,\ldots, \Gamma_n)$ is an element and an element
$\tau$ is of the form $(\Gamma_1,\ldots, \Gamma_{k})$ with $k <
n$, we say that $\tau$ is a \emph{proper} initial segment of
$\xi$. Moreover, if $k = n -1$, then $\tau$ is the \emph{greatest}
proper initial segment of $\xi$. We define $n$ to be  the
\emph{length} of $\xi$. Furthermore, we define that $\Gamma_n$ is
the end element of $\xi$ and write $\Gamma_n = end(\xi)$.

We now define the canonical model using elements as our building
blocks. We start by defining the following relation $\equiv$:

\begin{align*}
(\Gamma_1,\ldots, \Gamma_n, \Gamma_{n + 1}) \equiv
(\Delta_1,\ldots,& \Delta_n, \Delta_{n + 1}) \Leftrightarrow
(\Gamma_1 = \Delta_1 \& \ldots \& \Gamma_n = \Delta_n
\&\\
 &\& (\forall A \in Form)(\Box A \in \Gamma_{n + 1} \Rightarrow A
\in \Delta_{n + 1}).
\end{align*}

It is routine to check that $\equiv$ is an equivalence relation
given that $\Box$ is an S5 modality. The notation
$[(\Gamma_1,\ldots, \Gamma_n)]_\equiv$ will denote the
$\equiv$-equivalence class generated by $(\Gamma_1,\ldots,
\Gamma_n)$. Since all the elements inside a given
$\equiv$-equvalence class are of the same length, we may extend
the notion of length to these classes setting that the length of
$[(\Gamma_1,\ldots, \Gamma_n)]_\equiv$ also equals $n$.

We now proceed to definitions of components for the canonical
model.

\subsection{$Tree$, $\leq$, and $Hist(\mathcal{M})$}

The first two elements of the canonical model $\mathcal{M}$ are as
follows:

\begin{itemize}
    \item $Tree = \{ \dag \} \cup \{ ([\xi]_\equiv, n)\mid n \in \omega,\,\xi\textup{ is an element}\}$.
    Thus the elements of $Tree$, with the exception of the special
    moment $\dag$, are $\equiv$-equivalence classes of elements
    coupled with natural numbers. Such moments we will call
    \emph{standard} moments, and the left projection of a
    standard moment $m$ we will call its \emph{core} (and write $\overrightarrow{m}$), while the
    right projection of such moment we will call its \emph{height} (and write
    $|m|$). In this way, we get the equality $m = (\overrightarrow{m}, |m|)$
    for every standard $m \in Tree$.
    We further define that the length of a standard moment $m$ is
    the length of its core. For the sake of completeness, we extend the above notions
    to $\dag$ setting both length and height of this moment to $0$ and defining that $\overrightarrow{\dag} = \dag$.
\item We set that $(\forall m \in Tree\setminus\{ \dag \})(\dag
\lhd m \& \neg m \lhd \dag)$. We further set that for any two
standard moments $m$ and $m'$, we have that $m \lhd m'$ iff either
(1) there exists a $\xi \in
    \overrightarrow{m}$ such that for every $\tau\in
    \overrightarrow{m'}$, $\xi$ is a proper initial segment of $\tau$,
     or (2) $ \overrightarrow{m} =  \overrightarrow{m'}$ and $|m'| < |m|$.
    The relation $\unlhd$ is then defined as the reflexive
    companion to $\lhd$.
\end{itemize}

Before we move on to the choice- and justifications-related
components, let us pause to check that the restraints imposed by
our semantics on $Tree$ and $\unlhd$ are satisfied:
\begin{lemma}\label{leq}
The relation $\unlhd$, as defined above, is a partial order on
$Tree$, which satisfies both historical connection and no backward
branching constraints.
\end{lemma}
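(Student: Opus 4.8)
The plan is to verify the three required properties of $\unlhd$ separately, handling the special moment $\dag$ as a distinguished case throughout, since $\dag$ sits $\lhd$-below every standard moment and has no moment below it. I would organize the argument by first treating reflexivity (immediate from the definition of $\unlhd$ as the reflexive companion of $\lhd$), then antisymmetry and transitivity to get the partial order, and finally the two structural constraints.

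First I would establish the partial order. Antisymmetry reduces to showing $\lhd$ is irreflexive and that we never have both $m \lhd m'$ and $m' \lhd m$ for distinct $m, m'$. For standard moments this splits into cases according to whether the two moments share a core. When $\overrightarrow{m} = \overrightarrow{m'}$, clause (2) makes $\lhd$ behave like the reverse of the natural order on heights, which is clearly a strict order; when the cores differ, clause (1) ties $\lhd$ to the proper-initial-segment relation on elements, and I would note that within one $\equiv$-class all elements have the same length, so $m \lhd m'$ forces the length of $m$ to be strictly smaller than that of $m'$, ruling out a cycle. For transitivity I would again argue by cases on which of the two clauses applies to each of the two given $\lhd$-steps; the delicate combinations are when one step is a same-core height decrease and the other is a genuine initial-segment extension, and here I would use that the length is strictly monotone along clause-(1) steps and constant along clause-(2) steps to chain the witnesses $\xi$ appropriately. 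The moment $\dag$ only ever appears as a bottom element, so it causes no difficulty.

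Next I would check historical connection: given any $m, m_1$, I must produce a common $\unlhd$-predecessor $m_2$. The clean choice is $m_2 = \dag$, which lies below every standard moment by definition and below itself by reflexivity, so this constraint is essentially immediate once $\dag$ is in place. For no backward branching, I must show that if $m_1 \unlhd m$ and $m_2 \unlhd m$ then $m_1$ and $m_2$ are $\unlhd$-comparable. Again $\dag$ is trivially comparable with everything, so the substantive case has $m, m_1, m_2$ all standard. Here I expect to compare the cores and heights of $m_1$ and $m_2$ relative to those of $m$: both predecessors are obtained from $m$ either by shortening the core to a proper initial segment (clause (1)) or by increasing the height with the same core (clause (2)), and I would argue that two such predecessors of a common $m$ are always comparable because the underlying initial segments of $m$'s core elements are linearly ordered by length, and the heights are linearly ordered by $\leq$.

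The main obstacle will be the no-backward-branching verification, because it requires a careful interaction between the two clauses defining $\lhd$: one predecessor might reach $m$ by an initial-segment relation while the other reaches it by a height shift, and I must confirm these two modes still yield comparable moments. The key technical fact I will lean on is that elements in a fixed $\equiv$-class all share a length and that proper-initial-segment is compatible across the cores in the sense encoded by clause (1), so that the ``core depth'' (length) together with the height gives a linear coordinate system along any downward path to $m$. Pinning down precisely how clause (1)'s existential/universal quantifier pattern over elements of the classes $\overrightarrow{m}$ and $\overrightarrow{m'}$ behaves under these comparisons is where I would spend most of the care.
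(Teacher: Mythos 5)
Your plan follows the paper's proof essentially step for step: reflexivity by definition, transitivity and antisymmetry by case analysis on which of the two clauses of $\lhd$ applies to each link, historical connection via $\dag$ as the $\unlhd$-least element, and no backward branching by the same fourfold case split combined with the fact that initial segments of a common element are linearly ordered. The one detail you flag but leave open --- upgrading ``$\xi$ is a proper initial segment of \emph{some} $\tau \in \overrightarrow{m'}$'' to ``of \emph{every} element of $\overrightarrow{m'}$'' --- is discharged in the paper by observing that $\equiv$-equivalent elements differ at most in their last component and hence share the same greatest proper initial segment.
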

\begin{proof}
Reflexivity of $\unlhd$ holds by definition. For
\textbf{transitivity}, suppose that $m,m'$, and $m''$ are in
$Tree$ and that we have $m \unlhd m'$ and $m' \unlhd m''$. Then,
if any two moments among $m,m'$ and $m''$ coincide, or if one of
those moments is $\dag$, we must clearly have $m \unlhd m''$. So
suppose that all of  $m,m'$ and $m''$ are standard and pairwise
different so that we have $m \lhd m' \lhd m''$. We have then four
cases to consider:

\emph{Case 1.}  There are $\xi \in \overrightarrow{m}$ and $\tau
\in \overrightarrow{m'}$ such that $\xi$ is a proper initial
segment of every element in $\overrightarrow{m'}$ (and this
clearly includes $\tau$), and $\tau$ is a proper initial segment
of every element in $\overrightarrow{m''}$. It is immediate then
that $\xi$ is a proper initial segment of every element in
$\overrightarrow{m''}$, and $m \lhd m''$ follows.

\emph{Case 2.} We have $|m| > |m'| > |m''|$ and also
$\overrightarrow{m} = \overrightarrow{m'} = \overrightarrow{m''}$.
Then both $|m| > |m''|$ and $\overrightarrow{m} =
\overrightarrow{m''}$ clearly follow so that we get $m \lhd m''$.

\emph{Case 3.} There is a $\xi \in \overrightarrow{m}$ such that
$\xi$ is a proper initial segment of every element in
$\overrightarrow{m'}$. Additionally, we have both
$\overrightarrow{m'} = \overrightarrow{m''}$ and $|m'| > |m''|$.
Then clearly $\xi$ must be a proper initial segment also of every
element in $\overrightarrow{m''}$ so that $m \lhd m''$ holds.

\emph{Case 4.}  There is a $\tau \in  \overrightarrow{m'}$ such
that $\tau$ is a proper initial segment of every element in
$\overrightarrow{m''}$. On the other hand, we have both
$\overrightarrow{m} = \overrightarrow{m'}$ and $|m| > |m'|$. Then,
of course, $\tau$ is also in $\overrightarrow{m}$ and again we get
$m \lhd m''$.

As for \textbf{anti-symmetry}, assume that we have both $m \lhd
m'$ and $m' \lhd m$. Then both $m$ and $m'$ must be standard.
Again we have to consider four cases, and we obtain a
contradiction in each of them, showing that this situation never
arises:

\emph{Case 1.}  There are $\xi\in \overrightarrow{m}$ and $(\tau)
\in \overrightarrow{m'}$ such that $\xi$ is a proper initial
segment of every element in $\overrightarrow{m'}$ and $\tau$ is a
proper initial segment of every element in $\overrightarrow{m}$.
It is clear then that both $\xi$ is a proper initial segment of
$\tau$ and $\tau$ a proper initial segment of $\xi$, which gives
us the contradiction.

\emph{Case 2.} We have  $\overrightarrow{m} = \overrightarrow{m'}$
and also both $|m| > |m'|$ and $|m| < |m'|$. The contradiction is
immediate.

\emph{Case 3.} There is a $\xi \in \overrightarrow{m}$ such that
$\xi$ is a proper initial segment of every element in
$\overrightarrow{m'}$. Besides, we have both $\overrightarrow{m} =
\overrightarrow{m'}$ and $|m'| > |m|$. But then $\xi \in
\overrightarrow{m'}$ and therefore must be its own proper initial
segment, a contradiction.

\emph{Case 4.} There is a $\tau \in \overrightarrow{m'}$ such that
$\tau$ is a proper initial segment of every element in
$\overrightarrow{m}$, and also we have both $\overrightarrow{m} =
\overrightarrow{m'}$ and $|m| > |m'|$. This case is similar to
Case 3.

\textbf{Historical connection} is satisfied since $\dag$ is the
$\unlhd$-least element of $Tree$.

Let us prove the absence of \textbf{backward branching}. Assume
that we have both $m \unlhd m''$ and $m' \unlhd m''$ but neither
$m \unlhd m'$ nor $m' \unlhd m$ holds. This means that all the
three moments are pairwise different and none of them is $\dag$,
otherwise our assumptions about them would be immediately
falsified. Therefore, all the three moments are standard and we
also have $m \neq m'$, $m \lhd m''$, and $m' \lhd m''$. We will
use the familiar fourfold partition of cases:

\emph{Case 1.}  There are $\xi \in \overrightarrow{m}$ and $\tau
\in \overrightarrow{m'}$ such that both $\xi$ and $\tau$ are
proper initial segments of every element in
$\overrightarrow{m''}$. If $\xi = \tau$, then we must have
$\overrightarrow{m} = \overrightarrow{m'}$ since moment cores are
classes of equivalence. Hence we will have $|m| \neq |m'|$, since
$m \neq m'$. But then, depending on whether we have $|m| < |m'|$
or $|m'| < |m|$, we get either $m' \lhd m$ or $m \lhd m'$. On the
other hand, if $\xi$ is different from $\tau$, then either $\xi$
must be a proper initial segment of $\tau$ or vice versa. Assume,
wlog, that $\xi$ is a proper segment of $\tau$. Then $\xi$ is
included in the greatest proper initial segment of $\tau$ and
since every element in $\overrightarrow{m'}$ has the same greatest
proper initial segment, this means that $\xi$ is a proper initial
segment of every element in $\overrightarrow{m'}$ so that $m \lhd
m'$.

\emph{Case 2.}  We have, on the one hand, $\overrightarrow{m} =
\overrightarrow{m''}$ and $|m''| < |m|$, and, on the other hand
$\overrightarrow{m'} = \overrightarrow{m''}$ and $|m''| < |m'|$.
Then we immediately get that $\overrightarrow{m} =
\overrightarrow{m'}$. Further, by $m \neq m'$ we know that either
$|m| < |m'|$ or $|m'| < |m|$ whence we get, respectively, either
$m' \lhd m$ or $m \lhd m'$.

\emph{Case 3.} There is a $\xi \in \overrightarrow{m}$ such that
$\xi$ is a proper initial segment of every element in
$\overrightarrow{m''}$, and, on the other hand, we have both
$\overrightarrow{m'} = \overrightarrow{m''}$ and $|m''| < |m'|$.
Then, of course $\xi$ is also a proper initial segment of every
element in $\overrightarrow{m'}$, and $m \lhd m'$ follows.

\emph{Case 4.} There is a $\tau \in \overrightarrow{m'}$ such that
$\tau$ is a proper initial segment of every element in
$\overrightarrow{m''}$, and, on the other hand, we have both
$\overrightarrow{m} = \overrightarrow{m''}$ and $|m''| < |m|$.
This case is similar to Case 3.
\end{proof}

Before we move on to the other components of the canonical model
$\mathcal{M}$ to be defined in this section, we look into the
structure of $Hist(\mathcal{M})$ as induced by the above-defined
$Tree$ and $\unlhd$. We start by defining a basic sequence of
elements. A \emph{basic sequence} of elements is a set of elements
of the form $\{ \xi_1,\ldots,\xi_n,\ldots, \}$ such that for every
$n \geq 1$:
\begin{itemize}
    \item $\xi_n$ is of length $n$;
    \item $\xi_n$ is the greatest proper initial segment of
    $\xi_{n + 1}$.
\end{itemize}
Basic sequences will be denoted by capital Latin letters $S$, $T$,
and $U$ with subscripts and superscripts when needed. Every given
basic sequence $S$ induces the following $[S] \subseteq Tree$:
$$
[S] = \{\dag\} \cup \bigcup_{n \in \omega}\{ ([\xi_n]_\equiv, k)
\mid k \in \omega \}.
$$
It is immediate that every basic sequence $S$ induces a unique
$[S] \subseteq Tree$ in this way. It is, perhaps, less immediate
that the mapping $S \mapsto [S]$ is injective:
\begin{lemma}\label{injective}
Let $S$, $T$ be basic sequences of elements. Then:
$$
[S] = [T] \Rightarrow S = T.
$$
\end{lemma}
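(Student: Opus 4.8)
The plan is to reconstruct the basic sequence $S$ from the set $[S]$ in stages: first its set of cores, then the individual $\equiv$-classes indexed by length, and finally the actual elements $\xi_n$ componentwise. Throughout, write $S = \{ \xi_1, \xi_2, \ldots \}$ and $T = \{ \eta_1, \eta_2, \ldots \}$, and assume $[S] = [T]$.

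First I would note that the two defining conditions of a basic sequence force its members to cohere into a single chain of maxiconsistent sets: since $\xi_n$ is the greatest proper initial segment of $\xi_{n+1}$, there are maxiconsistent sets $\Gamma_1, \Gamma_2, \ldots$ with $\xi_n = (\Gamma_1, \ldots, \Gamma_n)$ for every $n \geq 1$, and likewise maxiconsistent sets $\Delta_1, \Delta_2, \ldots$ with $\eta_n = (\Delta_1, \ldots, \Delta_n)$. Next I would observe that the cores occurring in $[S]$ are recoverable from $[S]$ alone: by definition $[S]$ contains, for each $n$, every standard moment with core $[\xi_n]_\equiv$ (at all heights $k \in \omega$) and no standard moment with any other core, so the set of left projections of the standard moments in $[S]$ is exactly $\{ [\xi_n]_\equiv \mid n \geq 1 \}$. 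Hence $[S] = [T]$ gives at once
$$
\{ [\xi_n]_\equiv \mid n \geq 1 \} = \{ [\eta_n]_\equiv \mid n \geq 1 \}.
$$

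To pass from equality of these sets to a term-by-term match I would invoke the well-definedness of length on $\equiv$-classes: the class $[\xi_n]_\equiv$ has length $n$, so the classes on the left-hand side carry pairwise distinct lengths which exhaust the positive integers, and similarly on the right. Matching classes of equal length then yields $[\xi_n]_\equiv = [\eta_n]_\equiv$, i.e. $\xi_n \equiv \eta_n$, for every $n \geq 1$.

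The last step is to extract genuine equality of the elements, and this is the one point that needs care. The relation $\xi_n \equiv \eta_n$ by itself does \emph{not} give $\xi_n = \eta_n$: the definition of $\equiv$ only forces the first $n-1$ components to agree and links the two last components merely by $\Box A \in \Gamma_n \Rightarrow A \in \Delta_n$. The remedy is to look one index ahead. Applying the definition of $\equiv$ to $\xi_{n+1} \equiv \eta_{n+1}$ yields $\Gamma_1 = \Delta_1, \ldots, \Gamma_n = \Delta_n$; letting $n$ range over all of $\omega$, I obtain $\Gamma_k = \Delta_k$ for every $k \geq 1$, whence $\xi_n = \eta_n$ for all $n$ and therefore $S = T$. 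I expect this shift from level $n$ to level $n+1$ to be the crux of the argument, everything else being bookkeeping about cores, heights, and lengths.
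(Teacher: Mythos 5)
Your proof is correct and follows essentially the same route as the paper's: both arguments recover $[\xi_n]_\equiv = [\eta_n]_\equiv$ from the moments of a fixed length in $[S]=[T]$, and both then exploit the fact that $\xi_{n+1}\equiv\eta_{n+1}$ forces the first $n$ components to coincide (the paper phrases this as the two elements sharing their greatest proper initial segment). The "shift to level $n+1$" you identify as the crux is exactly the step the paper uses.
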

\begin{proof}
Assume that $S = \{ \xi_1,\ldots,\xi_n,\ldots, \}$ and that $T =
\{ \tau_1,\ldots,\tau_n,\ldots, \}$. We will show that $\xi_n =
\tau_n$ for arbitrary $n \in \omega$. Indeed, note that it is
immediate from the definition of $S \mapsto [S]$, that both $[S]$
and $[T]$ contain exactly one moment of length $n + 1$ and height
$0$, and these moments are $([\xi_{n + 1}]_\equiv, 0)$ and
$([\tau_{n + 1}]_\equiv, 0)$, respectively. Therefore, if $[S] =
[T]$, then we must have $([\xi_{n + 1}]_\equiv, 0) = ([\tau_{n +
1}]_\equiv, 0)$, whence, further, $[\xi_{n + 1}]_\equiv = [\tau_{n
+ 1}]_\equiv$ and $\xi_{n + 1}\equiv \tau_{n + 1}$. Therefore,
$\xi_{n + 1}$ and $\tau_{n + 1}$ must share the greatest proper
initial segment which is $\xi_n$ for $\xi_{n + 1}$ and $\tau_n $
for $\tau_{n + 1}$. Since this segment is the same for $\xi_{n +
1}$ and $\tau_{n + 1}$, it follows that $\xi_n = \tau_n$.
\end{proof}

We now move on to a characterization of $Hist(\mathcal{M})$, first
proving a number of technical lemmas:
\begin{lemma}\label{hist-length1}
If $h \in Hist(\mathcal{M})$ and $k \in \omega$, then $h$ contains
at least one moment of length exceeding $k$.
\end{lemma}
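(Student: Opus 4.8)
The plan is to show that any history $h \in Hist(\mathcal{M})$, being a maximal $\unlhd$-chain, must reach moments of arbitrarily large length. The key structural observation is that the \emph{core} of a standard moment is an $\equiv$-class of elements, each of which is a finite sequence of maxiconsistent sets; the length of a moment equals the length of these sequences, and this length is finite for every single moment. So a history cannot consist of finitely many distinct cores, and I must argue that within $h$ the length must actually grow without bound.

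First I would recall the definition of $\lhd$ between standard moments. For two standard moments $m \lhd m'$ we have either (1) some $\xi \in \overrightarrow{m}$ is a \emph{proper} initial segment of every $\tau \in \overrightarrow{m'}$, which forces $\mathrm{length}(m) < \mathrm{length}(m')$, or (2) $\overrightarrow{m} = \overrightarrow{m'}$ with $|m'| < |m|$, which leaves the length unchanged but strictly decreases the height. The plan is to exploit this dichotomy. Suppose toward a contradiction that $h$ contains no moment of length exceeding $k$; then every standard moment in $h$ has length at most $k$. Since $\dag$ has length $0$ and lies below everything, $h$ certainly contains standard moments, and I would consider the set of lengths realized along $h$, which is a nonempty subset of $\{0,1,\ldots,k\}$, hence has a maximum value $\ell$.

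Next I would isolate the segment of $h$ consisting of moments of that maximal length $\ell$. Along any step $m \lhd m'$ internal to $h$, case (1) is impossible once both have the maximal length (it would force a strictly greater length), so every such step must fall under case (2): the core is preserved and the height strictly decreases. This means that, restricted to the maximal-length moments, $h$ looks like a chain all sharing one fixed core $\overrightarrow{m}$ with strictly decreasing heights as we move $\unlhd$-upward. But heights range over all of $\omega$, and for a fixed core $c$ the moments $(c,k)$ for $k \in \omega$ form an infinite $\unlhd$-chain with $(c,k') \lhd (c,k)$ whenever $k < k'$, i.e. larger height sits $\unlhd$-\emph{below}. A maximal chain passing through some $(c,k)$ of maximal length $\ell$ can be extended downward (toward larger heights) indefinitely, so this does not by itself break maximality; the contradiction must instead come from the \emph{upward} direction.

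The main obstacle, and the crux of the argument, is therefore to show that a history cannot terminate upward while staying at bounded length. Here I would use that $h$ is a \emph{maximal} chain together with the fact that every element can be continued: by Lemma \ref{elementcontinuation}, for any element $\xi$ of length $n$ there is an element of length $n+1$ whose greatest proper initial segment is $\xi$, so any basic sequence extends, and correspondingly any standard moment has a $\lhd$-successor of strictly greater length via case (1). Concretely, taking the topmost moment of $h$ at height $0$ and maximal core length $\ell$, I would produce a strictly $\unrhd$-larger standard moment of length $\ell + 1$ that is $\unlhd$-comparable to the whole of $h$ (using no backward branching, Lemma \ref{leq}, to guarantee comparability), contradicting maximality of $h$ unless $h$ already contained a moment of length exceeding $k$. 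Thus no such bound $k$ can exist, and $h$ contains moments of arbitrarily large length, which is precisely the claim.
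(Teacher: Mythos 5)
Your overall strategy coincides with the paper's: assume toward a contradiction that the lengths realized in $h$ are bounded, pass to the maximal realized length $\ell$ (the paper equivalently takes the least bound $k$ such that a length-$k$ moment actually occurs in $h$, treating $k=0$ separately), note that all length-$\ell$ moments of $h$ share a single core $c$, use Lemma \ref{elementcontinuation} to manufacture a moment of length $\ell+1$ lying above everything in $h$, and contradict maximality. So this is essentially the same proof.

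The one step that would not survive being made precise is the comparability claim, which you yourself flag as the crux. You propose to derive $\unlhd$-comparability of the new length-$(\ell+1)$ moment $m''$ with every member of $h$ from no backward branching. That constraint only compares two moments that already share a $\unlhd$-upper bound, and nothing lies above $m''$, so it is inapplicable here; similarly, ``the topmost moment of $h$ at height $0$'' presupposes that $h$ has a $\unlhd$-greatest element and that it is $(c,0)$, which is true under the reductio hypothesis but is exactly what must be verified. The paper does this by direct computation with the definition of $\lhd$: for $m \in h$ of maximal length and any $m' \in h$ with $m \lhd m'$, clause (1) of the definition is excluded because it would force the length of $m'$ to exceed the maximum, so $\overrightarrow{m'} = \overrightarrow{m}$ and hence $m' \unlhd (c,0)$; thus $(c,0)$ is a $\unlhd$-upper bound of $h$, and comparability of $m''$ with all of $h$ is then plain transitivity through $(c,0) \lhd m''$. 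Substituting that computation for the appeal to no backward branching repairs your argument; the rest of what you say (including the observations about shared cores and decreasing heights) is correct, if partly dispensable.
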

\begin{proof}
Suppose otherwise, and let $k \in \omega$ be such that every
moment in $h$ has length at most $k$. We may assume that this is
the least such $k$ so that some elements of the length $k$ are
actually in $h$. We have to consider two cases then:

\emph{Case 1}. $k = 0$. Then $h =\{ \dag \}$. Take any
maxiconsistent $\Gamma \subseteq Form$, it is immediate that
$(\Gamma)$ is an element. Then $([(\Gamma)]_\equiv, 0) \in Tree$
and, moreover $\dag \lhd ([(\Gamma)]_\equiv, 0)$, so that $\{
\dag,([(\Gamma)]_\equiv, 0)\}$ is a $\unlhd$-chain properly
extending $h$, which contradicts the maximality of $h$.

\emph{Case 2}. $k > 0$. Then take an arbitrary moment $m$ of the
length $k$ in $h$, say $m =
([(\Gamma_1,\ldots,\Gamma_k)]_\equiv,n)$. Then
$([(\Gamma_1,\ldots,\Gamma_k)]_\equiv,0)$ is an $\unlhd$-upper
bound for $h$. Indeed, we clearly have $m \unlhd
([(\Gamma_1,\ldots,\Gamma_k)]_\equiv,0)$. Now, if $m' \in h$, then
either $m' \unlhd m$, or $m \lhd m'$. If $m' \unlhd m$, then, by
transitivity, $m' \unlhd ([(\Gamma_1,\ldots,\Gamma_k)]_\equiv,0)$
and we are done. If $m \lhd m'$, then we cannot have any $\xi \in
\overrightarrow{m}$ such that $\xi$ is a proper initial segment of
every element in $\overrightarrow{m'}$ since every such $\xi$ is
of length $k$ and this would mean that elements in
$\overrightarrow{m'}$ must have a length greater than $k$, which
contradicts the choice of $m'$. Therefore, we must have
$\overrightarrow{m} =  \overrightarrow{m'}$ and also $|m'| < |m|$.
But then also  $m' \unlhd ([(\Gamma_1,\ldots,\Gamma_k)]_\equiv,0)$
clearly follows.

Now, using Lemma \ref{elementcontinuation}, we can choose a
$\Gamma_{k + 1} \subseteq Form$ such that
$(\Gamma_1,\ldots,\Gamma_k,\Gamma_{k + 1})$ is an element.
Consider then $m'' = ([(\Gamma_1,\ldots,\Gamma_k,\Gamma_{k +
1})]_\equiv,0) \in Tree$. We obviously have $m'' \notin h$ since
the length of $m''$ is $k + 1$. On the other hand, we have, by
definition of $\unlhd$, that
$([(\Gamma_1,\ldots,\Gamma_k)]_\equiv,0) \lhd m''$. Hence $h \cup
\{ m'' \}$ is a $\unlhd$-chain properly extending $h$, which,
again, contradicts the maximality of $h$.
\end{proof}
\begin{lemma}\label{hist-length2}
If $h \in Hist(\mathcal{M})$ and $k \in \omega$, then $h$ contains
at least one moment of the length $k$.
\end{lemma}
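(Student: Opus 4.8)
The plan is to reduce the lemma to a single ``step-down'' observation and then feed it Lemma \ref{hist-length1}. First I would record two preparatory facts. (i) Every initial segment of an element is again an element: this is immediate from Definition \ref{element}, since the maxiconsistency of each $\Gamma_i$ and the $K$- and $\Box E$-transfer conditions for indices below the length are all inherited by any initial segment. (ii) Histories are downward closed, i.e. if $m \in h$ and $m' \unlhd m$ then $m' \in h$. To establish (ii) I would show that $\{m'\} \cup h$ is a $\unlhd$-chain: for an arbitrary $g \in h$, since $h$ is a chain and $m \in h$ we have either $g \unlhd m$ or $m \unlhd g$; in the former case no backward branching (Lemma \ref{leq}) makes $g$ and $m'$ comparable, and in the latter $m' \unlhd m \unlhd g$. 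Maximality of $h$ then forces $m' \in h$.

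The heart of the argument is the step-down claim: if $h$ contains a moment of length $n + 1$, then it contains a moment of length $n$, for every $n \geq 1$. Given such a moment $m = ([(\Gamma_1,\ldots,\Gamma_{n+1})]_\equiv, j) \in h$, I would set $m' = ([(\Gamma_1,\ldots,\Gamma_n)]_\equiv, 0)$, which is a genuine standard moment by fact (i). Taking $\xi = (\Gamma_1,\ldots,\Gamma_n) \in \overrightarrow{m'}$, I observe that every $\tau \in \overrightarrow{m}$ has the form $(\Gamma_1,\ldots,\Gamma_n,\Delta)$, because $\equiv$ fixes all coordinates but the last; hence $\xi$ is a proper initial segment of each such $\tau$, and clause (1) of the definition of $\lhd$ yields $m' \lhd m$. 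Since the core of $m'$ has length $n$, the moment $m'$ has length $n$, and by the downward closure (ii) we conclude $m' \in h$.

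Finally I would assemble the lemma. Fix $k \in \omega$. The case $k = 0$ is settled by noting that $\dag$, being the $\unlhd$-least moment of $Tree$, lies on every maximal chain and has length $0$. For $k \geq 1$, Lemma \ref{hist-length1} supplies a moment of some length $N > k$ in $h$; applying the step-down claim $N - k$ times produces moments of lengths $N-1, N-2, \ldots, k$ in $h$, and in particular one of length exactly $k$.

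I expect the main obstacle to be fact (ii), the downward closure of histories. It is the only point where the global shape of $h$ is genuinely invoked — maximality together with no backward branching from Lemma \ref{leq} — and it is precisely what guarantees that the explicitly built lower moment $m'$ actually sits on the given history, rather than merely below one of its members.
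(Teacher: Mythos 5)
Your proof is correct and follows essentially the same route as the paper: the paper's Case 2 is exactly your step-down claim, established by the same construction of $m' = ([(\Gamma_1,\ldots,\Gamma_n)]_\equiv, 0)$, the same verification that $m' \lhd m$, and the same use of no backward branching plus maximality of $h$ to force $m' \in h$. Your packaging of the chain argument as a general downward-closure fact and the direct iteration from Lemma \ref{hist-length1} is a slightly cleaner organization of the paper's case split, but the mathematical content is identical.
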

\begin{proof}
Take an arbitrary $k\in \omega$. If $k = 0$, then the lemma holds,
since $\dag$, being the $\unlhd$-least moment in $Tree$, is of
course in $h$. Assume that $k > 0$. There are two cases to
consider then.

\emph{Case 1}. For every $n + 1 \in \omega$ it is true that
whenever there is a moment of the length $n + 1$ in $h$, then
there is also a moment of length $n$ in $h$. Then our lemma
follows from Lemma \ref{hist-length1}.

\emph{Case 2}. There is an $n + 1 \in \omega$ such that some $m
\in Tree$ of the length $n + 1$ is in $h$, but there are no
moments of the length $n$ in $h$. Then consider $m$, say $m =
([(\Gamma_1,\ldots,\Gamma_n, \Gamma_{n + 1})]_\equiv, r)$. We show
then that $m' = ([(\Gamma_1,\ldots,\Gamma_n)]_\equiv, 0)$ must be
in $h$ as well, since  $h \cup \{ m' \}$ is a $\unlhd$-chain and
$h$ is maximal. Indeed, we have $m' \lhd m$, since
$(\Gamma_1,\ldots,\Gamma_n)$ is a proper initial segment of every
element in $\overrightarrow{m}$. Now, if $m'' \in h$, then either
$m \unlhd m''$, or $m'' \lhd m$. If $m \unlhd m''$, then of course
$m' \lhd m''$ by transitivity.  If, on the other hand, $m'' \lhd
m$, then, by the absence of backward branching, either $m'' \unlhd
m'$ or $m' \unlhd m$.

Thus we have shown that $m' \in h$, and since the length of $m'$
equals $n$, this gives us a contradiction with the hypothesis of
Case 2.
\end{proof}
\begin{lemma}\label{hist-length3}
Assume that $h \in Hist(\mathcal{M})$, that $k \in \omega$, and
that $m, m' \in h$ are of the length $k$. Then
$\overrightarrow{m}=\overrightarrow{m'}$.
\end{lemma}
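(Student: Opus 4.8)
The plan is to exploit the $\unlhd$-chain structure of $h$ together with the two-way dichotomy built into the definition of $\lhd$ on standard moments. First I would dispose of the degenerate case $k = 0$: since every standard moment has as its core an element of length at least $1$, the only moment of length $0$ in $Tree$ is $\dag$, and hence $m = m' = \dag$, so that $\overrightarrow{m} = \overrightarrow{m'} = \dag$ follows immediately.

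For $k \geq 1$ both $m$ and $m'$ are standard. Because $h$ is a $\unlhd$-chain containing $m$ and $m'$, the two moments are $\unlhd$-comparable; without loss of generality I assume $m \unlhd m'$. If $m = m'$ there is nothing to prove, so I suppose $m \lhd m'$. At this point I would invoke the definition of $\lhd$ between standard moments, which presents two alternatives: either (1) some $\xi \in \overrightarrow{m}$ is a proper initial segment of every element of $\overrightarrow{m'}$, or (2) $\overrightarrow{m} = \overrightarrow{m'}$ together with $|m'| < |m|$.

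The crux is to rule out alternative (1) on length grounds. Any $\xi \in \overrightarrow{m}$ has length $k$, because the length of a moment is by definition the common length of the elements comprising its core and $m$ has length $k$. But a proper initial segment of an element is strictly shorter than that element, so alternative (1) would force every $\tau \in \overrightarrow{m'}$ to have length strictly greater than $k$, contradicting the assumption that $m'$ has length $k$. Consequently alternative (2) must hold, and in particular $\overrightarrow{m} = \overrightarrow{m'}$, which is exactly the desired conclusion.

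I expect no genuine obstacle here: the whole argument is a direct reading of the two defining clauses of $\lhd$ against the length bookkeeping. The only point deserving a line of care is the remark that cores are $\equiv$-equivalence classes of elements all sharing a single length, so that ``the length of $m$'' is well defined and coincides with the length of every $\xi \in \overrightarrow{m}$; this is what licenses the length contradiction that kills case (1).
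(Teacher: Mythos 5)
Your proof is correct and follows essentially the same route as the paper's: dispose of $k=0$ by noting $\dag$ is the only moment of that length, use comparability within the chain, and rule out the ``proper initial segment'' clause of $\lhd$ on length grounds so that the second clause forces $\overrightarrow{m}=\overrightarrow{m'}$. The paper's version is just a terser rendering of the same argument.
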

\begin{proof}
We may assume that $k > 0$ since there is only one core of length
$0$. If $m, m' \in h$ are standard moments, then either $m \unlhd
m'$ or $m' \unlhd m$. Assume, wlog, that $m \unlhd m'$. Then there
is no $\xi \in \overrightarrow{m}$ such that $\xi$ is a proper
initial segment of every element in $\overrightarrow{m'}$, since
the length of $\xi$ is equal to the length of elements in
$\overrightarrow{m'}$. Therefore, we must have
$\overrightarrow{m}=\overrightarrow{m'}$ by definition of
$\unlhd$.
\end{proof}
We now offer the following characterization of
$Hist(\mathcal{M})$:
\begin{lemma}\label{structure}
The following statements hold:

\begin{enumerate}
\item If $S = \{ \xi_1,\ldots,\xi_n,\ldots, \}$ is a basic
sequence, then $[S] \in Hist(\mathcal{M})$, and the following
presentation gives $[S]$ in the $\unlhd$-ascending order:
$$
\dag,\ldots,([\xi_1]_\equiv, k),\ldots, ([\xi_1]_\equiv,
0),\ldots,([\xi_n]_\equiv, k),\ldots, ([\xi_n]_\equiv, 0),\ldots,
$$

\item $Hist(\mathcal{M}) = \{ [S] \mid S\textup{ is a basic
sequence}\}$.
\end{enumerate}
\end{lemma}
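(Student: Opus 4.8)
The plan is to treat the two parts in turn, leaning on the length/core bookkeeping already established in Lemmas \ref{hist-length1}--\ref{hist-length3}. For Part 1, I would first check that $[S]$ is a $\unlhd$-chain and that it is listed in the stated ascending order. Comparability of two standard moments $([\xi_n]_\equiv, k)$ and $([\xi_m]_\equiv, k')$ splits into the case $n = m$, where clause (2) of the definition of $\lhd$ compares them by height (so that greater height sits lower, which explains why each $\xi_n$-block descends toward $\dag$ and peaks at height $0$), and the case $n < m$, where I would note that $\xi_n$ is a proper initial segment of $\xi_m$ by transitivity along the basic sequence, and that, since the members of $[\xi_m]_\equiv$ all share their first $m-1$ components, $\xi_n$ (of length $n \le m-1$) is a proper initial segment of every element of $[\xi_m]_\equiv$; clause (1) then yields $([\xi_n]_\equiv, k) \lhd ([\xi_m]_\equiv, k')$. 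Together with $\dag$ being the $\unlhd$-least moment, this gives both the chain property and the displayed ordering.

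For maximality in Part 1, I would suppose that some $m^\ast \in Tree \setminus [S]$ keeps $[S] \cup \{ m^\ast \}$ a chain and derive a contradiction. Such an $m^\ast$ is standard, of some length $n \ge 1$; comparing it with $([\xi_n]_\equiv, 0) \in [S]$, which has the same length, I would observe that clause (1) is impossible between moments of equal length (a proper initial segment is strictly shorter), so clause (2) must apply and forces $\overrightarrow{m^\ast} = [\xi_n]_\equiv$. But then $m^\ast = ([\xi_n]_\equiv, |m^\ast|) \in [S]$, a contradiction.

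For Part 2, the inclusion $\supseteq$ is immediate from Part 1. For $\subseteq$, let $h \in Hist(\mathcal{M})$. By Lemma \ref{hist-length2}, $h$ meets every length $k \in \omega$, and by Lemma \ref{hist-length3} all moments of $h$ of a fixed length $k$ share a common core $c_k$. The task is then to manufacture a basic sequence $S = \{ \xi_1, \xi_2, \ldots \}$ with $[\xi_k]_\equiv = c_k$. Here I would define $\xi_k$ to be the greatest proper initial segment of (any representative of) $c_{k+1}$: this is well defined because $\equiv$-equivalent elements agree on all but their last component and hence share their greatest proper initial segment, and the chain property of $h$ (comparing a length-$k$ with a length-$(k+1)$ moment, where the order forces $m_k \lhd m_{k+1}$) shows via clause (1) that this segment lies in $c_k$ and has length $k$. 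With this choice $\xi_k$ is automatically the greatest proper initial segment of $\xi_{k+1}$, so $S$ is a basic sequence with the right cores. Finally, every $m \in h$ of length $k$ equals $(c_k, |m|) \in [S]$ by Lemma \ref{hist-length3}, so $h \subseteq [S]$; since $h$ is a maximal chain and $[S]$ is a chain by Part 1, we conclude $h = [S]$.

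I expect the one genuinely delicate point to be the construction of $S$ in Part 2: a naive reading of Lemma \ref{hist-length3} only delivers cores up to $\equiv$, and comparing adjacent representatives shows merely that the greatest proper initial segment of $\xi_{k+1}$ is $\equiv$-equivalent to an arbitrarily chosen $\xi_k$, which is not enough for the definition of a basic sequence. Defining each $\xi_k$ directly as the greatest proper initial segment of $c_{k+1}$ is what upgrades this to the literal equality the definition requires, and checking that these choices nest coherently across all $k$ is the crux. The remaining steps are routine applications of the definition of $\lhd$ and of the earlier length lemmas.
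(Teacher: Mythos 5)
Your proof is correct and follows essentially the same route as the paper's: the same decomposition (chain property plus maximality for Part~1; extraction of a basic sequence from the cores of $h$ via Lemmas \ref{hist-length2} and \ref{hist-length3} for Part~2), with your canonical choice of $\xi_k$ as the common greatest proper initial segment of $c_{k+1}$ coinciding with the witness the paper selects. The only (harmless) deviations are that for maximality in Part~1 you compare the intruding moment with the equal-length moment $([\xi_n]_\equiv,0)$ rather than with $([\xi_{n+1}]_\equiv,0)$ as the paper does, and in Part~2 you establish $h \subseteq [S]$ and invoke maximality of $h$, whereas the paper proves $[S] \subseteq h$ and invokes maximality of $[S]$ from Part~1 --- both variants are sound and, if anything, slightly more direct.
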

\begin{proof} (Part 1). It is quite easy to see that for a given
basic sequence $S = \{ \xi_1,\ldots,\xi_n,\ldots, \}$, $[S]$ is a
$\unlhd$-chain and that Part 1 of the Lemma represents this chain
in the ascending order. We focus on maximality of $[S]$ as a
$\unlhd$-chain. Suppose $m \in Tree$ is such that $m \notin [S]$,
but $[S] \cup \{ m \}$ is still a $\unlhd$-chain. Then $m$ must be
standard, since $\dag$ is already in $[S]$. Suppose $m =
([\tau]_\equiv, k)$ for some element $\tau$ and $k \in \omega$,
and suppose that the length of $m$ is $n \geq 1$. Consider then
$([\xi_{n+1}]_\equiv, 0) \in [S]$. Since $[S] \cup \{ m \}$ is a
$\unlhd$-chain we must have either $([\xi_{n+1}]_\equiv, 0) \unlhd
m$ or $m \lhd [\xi_{n+1}]_\equiv, 0)$. But the length of
$([\xi_{n+1}]_\equiv, 0)$ is greater than the length of $m$,
therefore $[\xi_{n+1}]_\equiv \neq \overrightarrow{m}$ and also no
element in $[\xi_{n+1}]_\equiv$ can be a proper initial segment of
any element in $\overrightarrow{m}$. Therefore, we cannot have
$([\xi_{n+1}]_\equiv, 0) \unlhd m$ and must then get $m \lhd
([\xi_{n+1}]_\equiv, 0)$. Given that we have shown
$[\xi_{n+1}]_\equiv \neq \overrightarrow{m}$, $m \lhd
([\xi_{n+1}]_\equiv, 0)$ must mean that some element $\tau \in
\overrightarrow{m}$ is a proper initial segment of every element
in $([\xi_{n+1}]_\equiv$ including $\xi_{n+1}$. Since the length
of $\xi_{n+1}$ is $n + 1$ and the length of $m$ is n, this means
that $\tau'$ must be the greatest proper initial segment of
$\xi_{n+1}$. But the greatest proper initial segment of
$\xi_{n+1}$ is $\xi_n$, therefore $\tau' = \xi_n$ and,
consequently, $m = ([\tau]_\equiv, k) = ([\xi_n]_\equiv, k) \in
[S]$, which contradicts the choice of $m$.

(Part 2). It follows from Part 1 that  $Hist(\mathcal{M})
\supseteq \{ [S] \mid S\textup{ is a basic sequence}\}$, so we
only need to show the inverse inclusion. So, choose an arbitrary
$h \in Hist(\mathcal{M})$. Consider the set
$$
core(h) = \{ \overrightarrow{m} \mid m \in h\}.
$$
It follows from Lemmas \ref{hist-length2} and \ref{hist-length3}
that $core(h)$ contains exactly one moment core of the length $n$
for every $n \in \omega$. Therefore, $core(h)$ has the form $\{
\dag, \alpha_1,\ldots,\alpha_n,\ldots,  \}$, where every
$\alpha_k$ is an equivalence class of elements of length $k$. We
now claim that if $k \geq 2$, then there is a $\xi_{k - 1} \in
\alpha_{k - 1}$ such that $\xi_{k - 1}$ is a proper initial
segment of every element in $\alpha_k$. Indeed, we know that for
some $r, r' \in \omega$ the moments $(\alpha_{k - 1}, r),
(\alpha_k, r')$ are in $h$. We cannot have $(\alpha_k, r') \unlhd
(\alpha_{k - 1}, r)$ since the length of $\alpha_{k - 1}$ is
strictly less than the length of $\alpha_k$. Therefore, since $h$
is a chain, we must have $(\alpha_{k - 1}, r) \lhd (\alpha_k,
r')$, and, again by length considerations, there must be a $\xi_{k
- 1} \in \alpha_{k - 1}$ such that $\xi_{k - 1}$ is a proper
initial segment of every element in $\alpha_k$.

So we choose such a $\xi_{k - 1} \in \alpha_{k - 1}$ for every $k
\geq 2$. In this way we obtain the sequence $S = \{
\xi_1,\ldots,\xi_n,\ldots, \}$ with the following properties:
\begin{enumerate}
\item For all $k \geq 1$, $\xi_k \in \alpha_k$ (so that $\alpha_k
= [\xi_k]_\equiv$ and $\xi_k$ itself is therefore of the length
$k$);

\item For all $k \geq 1$, $\xi_k$ is a proper initial segment of
every element in $\alpha_{k + 1}$.
\end{enumerate}
Now, for given $k \geq 1$, since $\xi_k$ is a proper initial
segment of every element in $\alpha_{k + 1}$, then $\xi_k$ is also
a proper initial segment of $\xi_{k + 1}$. And since the lengths
of $\xi_k$ and $\xi_{k + 1}$ are $k$ and $k + 1$, respectively,
then $\xi_k$ is the greatest proper initial segment of $\xi_{k +
1}$. This means that the sequence $S = \{
\xi_1,\ldots,\xi_n,\ldots, \}$ is in fact a basic sequence. We now
show that $[S] \subseteq h$ and since, by Part 1, $[S]$ is itself
a history, this will mean that $[S] = h$, and that, given that $h$
was chosen arbitrarily, we will be done.

Indeed, assume that $m \in [S]$. If $m = \dag$, then of course $m
\in h$ by maximality of $h$, since $\dag$ is the $\unlhd$-least
element in $Tree$. Therefore, assume that $m$ is standard, say $m
= ([\xi_n]_\equiv, k)$. Take an arbitrary $m' \in h$. We will show
that we either have $m \unlhd m'$ or $m' \unlhd m$. In the case
when $m' = \dag$ we trivially get $m' \lhd m$ so we assume that
$m'$ is standard so that for some appropriate $k',n' \in \omega$
we must have $m' = ([\xi_{n'}]_\equiv, k')$. We have then three
cases to consider:

\emph{Case 1}. $n' < n$. Then $\xi_{n'}$ must be a proper initial
segment of every element in $[\xi_n]_\equiv$, and we immediately
get $m' \lhd m$.

\emph{Case 2}. $n < n'$. This case is an inversion of Case 1,
giving us that $m \lhd m'$.

\emph{Case 3}. $n = n'$. Then $\overrightarrow{m} =
\overrightarrow{m'}$ and, depending on whether we have $k < k'$,
$k' < k$, or $k = k'$ we obtain that $m' \lhd m$, $m \lhd m'$, or
$m = m'$, respectively.

Thus we have shown that $h \cup \{ m\}$ is an $\unlhd$-chain,
whence, by the maximality of $h$, it follows that $m \in h$. And
since $m \in [S]$ was chosen arbitrarily, this means that $[S]
\subseteq h$ and therefore $[S] = h$, as desired.
\end{proof}
It follows from Lemmas \ref{structure} and \ref{injective} that
not only every basic sequence generates a unique $h \in
Hist(\mathcal{M})$, but also for every $h \in Hist(\mathcal{M})$
there exists a unique basic sequence $S$ such that $h = [S]$. We
will denote this unique $S$ for a given $h$ by $]h[$. It is
immediate from Lemmas \ref{structure} and \ref{injective} that for
every $h \in Hist(\mathcal{M})$, $h = [(]h[)]$. Likewise, for
every basic sequence $S$, we have $S = ]([S])[$. As a further
useful piece of notation, we introduce the notion of
\emph{intersection} of a standard moment $m$ with a history $h \in
H_m$. Assume that $m$ is of the length $n$ and that $]h[  = \{
\xi_1,\ldots,\xi_n,\ldots, \}$. Then $m$ must be of the form
$([\xi_n]_\equiv, k)$ for some $k\in\omega$, and we will also have
$\overrightarrow{m} \cap ]h[ = \{ \xi_n \}$. We now define the
only member of the latter singleton as the result $m \sqcap h$ of
the intersection of $m$ and $h$, setting $m \sqcap h = \xi_n$. It
can be shown that for any element $\xi$ in the core of a given
standard moment $m$ there exists an $h \in H_m$ such that $\xi = m
\sqcap h$:
\begin{lemma}\label{sqcap}
Let $(\Gamma_1,\ldots,\Gamma_k)$ be an element. Then, for every $n
\in \omega$ there is at least one history $h \in
H_{([(\Gamma_1,\ldots,\Gamma_k)]_\equiv, n)}$ such that
$([(\Gamma_1,\ldots,\Gamma_k)]_\equiv, n) \sqcap h =
(\Gamma_1,\ldots,\Gamma_k)$.
\end{lemma}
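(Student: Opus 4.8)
The plan is to construct a basic sequence $S$ whose $k$-th term is \emph{literally} the given element $(\Gamma_1,\ldots,\Gamma_k)$, and then to exhibit $h = [S]$ as the desired history. The construction does not depend on the height $n$, so a single $S$ will take care of every $n \in \omega$ at once.

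First I would observe that every proper initial segment of an element is again an element: the conditions of Definition \ref{element} imposed on $(\Gamma_1,\ldots,\Gamma_j)$ for $j < k$ are exactly a subset of those already known to hold for $(\Gamma_1,\ldots,\Gamma_k)$. Hence, setting $\xi_j := (\Gamma_1,\ldots,\Gamma_j)$ for each $j \leq k$ yields a chain of elements in which $\xi_j$ has length $j$ and is the greatest proper initial segment of $\xi_{j+1}$. Next I would extend this chain upward past $k$ using Lemma \ref{elementcontinuation}: starting from $\xi_k = (\Gamma_1,\ldots,\Gamma_k)$, repeated application of that lemma produces sets $\Gamma_{k+1}, \Gamma_{k+2},\ldots$ such that each $\xi_j := (\Gamma_1,\ldots,\Gamma_j)$ for $j > k$ is again an element of length $j$ with $\xi_j$ the greatest proper initial segment of $\xi_{j+1}$. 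The resulting $S = \{\xi_1,\ldots,\xi_n,\ldots\}$ is then by construction a basic sequence, so by Lemma \ref{structure} we have $[S] \in Hist(\mathcal{M})$.

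It remains to verify that $h := [S]$ has the two required properties. Since $\xi_k = (\Gamma_1,\ldots,\Gamma_k)$, we have $[\xi_k]_\equiv = [(\Gamma_1,\ldots,\Gamma_k)]_\equiv$, so by the defining formula for $[S]$ the moment $m := ([(\Gamma_1,\ldots,\Gamma_k)]_\equiv, n) = ([\xi_k]_\equiv, n)$ belongs to $[S]$ for every $n$; thus $h \in H_m$. Finally, $m$ has length $k$ and $]h[ \,= S$, so by the definition of intersection $m \sqcap h = \xi_k = (\Gamma_1,\ldots,\Gamma_k)$, exactly as required.

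The one point that requires care — and the reason the claim is not entirely trivial — is that $\overrightarrow{m}$ is merely an $\equiv$-equivalence class, which may contain elements distinct from $(\Gamma_1,\ldots,\Gamma_k)$; the definition of $m \sqcap h$ singles out the particular representative lying in $]h[$, and it is precisely this representative we must control. The construction sidesteps the difficulty by placing $(\Gamma_1,\ldots,\Gamma_k)$ \emph{itself}, rather than some $\equiv$-equivalent representative, at position $k$ of the basic sequence; the ``greatest proper initial segment'' clause in the definition of a basic sequence then propagates this choice consistently through the lower and higher terms, so that $\xi_k$ is forced to be $(\Gamma_1,\ldots,\Gamma_k)$ rather than any other member of its class.
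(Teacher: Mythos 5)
Your proposal is correct and follows essentially the same route as the paper: extend $(\Gamma_1,\ldots,\Gamma_k)$ upward via repeated application of Lemma \ref{elementcontinuation}, assemble the basic sequence $S$ whose $k$-th term is the given element itself together with its initial segments below, and take $h=[S]$. The closing remark about controlling which representative of $[(\Gamma_1,\ldots,\Gamma_k)]_\equiv$ lands in $]h[$ is a point the paper leaves implicit, and it is a sensible thing to make explicit.
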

\begin{proof}
Using Lemma \ref{elementcontinuation} and axiom of choice, we
successively choose $\Gamma_{k + 1},\ldots, \Gamma_{k + l},
\ldots, \subseteq Form$ such that all of the structures
$$
(\Gamma_1,\ldots,\Gamma_k,\Gamma_{k + 1}),\ldots,
(\Gamma_1,\ldots,\Gamma_k,\Gamma_{k + 1},\ldots, \Gamma_{k + l}),
\ldots,
$$
are elements. But then, it is obvious that the set:
\begin{align*}
   S =\{(\Gamma_1),\ldots, (\Gamma_1,\ldots,\Gamma_k),(\Gamma_1,\ldots,\Gamma_k,\Gamma_{k +
   1}),\ldots,
(\Gamma_1,\ldots,\Gamma_k,\Gamma_{k + 1},\ldots, \Gamma_{k + l}),
\ldots,\}
\end{align*}
is a basic sequence and $[(\Gamma_1,\ldots,\Gamma_k)]_\equiv, n)
\in [S]$ so that $[S] \in H_{([(\Gamma_1,\ldots,\Gamma_k)]_\equiv,
n)}$. Further, it is clear that
$[(\Gamma_1,\ldots,\Gamma_k)]_\equiv, n) \sqcap [S] =
(\Gamma_1,\ldots,\Gamma_k)$, as desired.
\end{proof}

We offer some general remarks on what we have shown thus far.
Lemma \ref{structure} shows that every history in the canonical
model has a uniform order structure which can be otherwise
described as follows. If $L$ and $L'$ are two linear orders then
let $L \oplus L'$ be a copy of $L$ with a copy of $L'$ appended at
the end, let $L\otimes L'$ be the result of replacement of every
element in $L'$ with a disjoint copy of $L$, and let $L^\ast$ be
the inversion of $L$. Also, for any $n \in \omega$, let
$(0,\ldots,n)$ be the first $n + 1$ natural numbers with their
natural order. Then Lemma \ref{structure} tells us that every
history in the canonical model is ordered in the type of $(0)
\oplus(\omega^\ast \otimes \omega)$. Also, note that it follows
from Lemma \ref{structure} that for every ordered couple of
natural numbers $(k,n)$ with $k > 0$, every given history $h$
contains exactly one moment of length $k$ and height $n$. Another
general observation is that histories in $\mathcal{M}$ can only
branch off at moments of height $0$, so that at moments of other
heights all the histories remain undivided. This last fact does
not follow from the lemmas proved thus far and we end this
subsection with its proof, also establishing a couple of technical
facts to be used later:

\begin{lemma}\label{property1}
Let $m, m' \in Tree$, and let $h \in H_m$. If $\overrightarrow{m}
= \overrightarrow{m'}$, then $h \in H_{m'}$ and also $m \sqcap h =
m' \sqcap h$.
\end{lemma}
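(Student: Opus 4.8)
The plan is to reduce everything to the explicit description of histories furnished by Lemma \ref{structure}, together with the definitions of $]h[$ and of the intersection operation $\sqcap$. First I would dispose of the degenerate case: if $m = \dag$, then $\overrightarrow{m} = \dag$, so the hypothesis $\overrightarrow{m} = \overrightarrow{m'}$ forces $\overrightarrow{m'} = \dag$ and hence $m' = \dag = m$, since $\dag$ is the only moment whose core is $\dag$. In this case the conclusion $h \in H_{m'}$ is immediate and the claim about $\sqcap$ is vacuous. From then on I would assume both $m$ and $m'$ are standard (this is forced, since a core equal to a nontrivial equivalence class of elements can only belong to a standard moment), and I would write $\alpha := \overrightarrow{m} = \overrightarrow{m'}$, an equivalence class of elements of some common length $n \geq 1$.

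Next I would pass to the basic-sequence presentation of $h$. Since $h \in H_m \subseteq Hist(\mathcal{M})$, Lemmas \ref{structure} and \ref{injective} give a unique basic sequence $]h[ = \{\xi_1,\ldots,\xi_n,\ldots\}$ with $h = [(]h[)]$. Because $m \in h$ is standard of length $n$, the ascending presentation in Lemma \ref{structure}.1 shows that $m = ([\xi_n]_\equiv, |m|)$, and in particular $\alpha = \overrightarrow{m} = [\xi_n]_\equiv$. This identifies the term of the basic sequence that both moments will share.

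For the first conclusion I would simply observe that $m'$, being standard with $\overrightarrow{m'} = \alpha = [\xi_n]_\equiv$, has the form $m' = ([\xi_n]_\equiv, |m'|)$. But by the description of $[S]$ in Lemma \ref{structure}.1, the history $h$ contains $([\xi_n]_\equiv, k)$ for every $k \in \omega$; taking $k = |m'|$ yields $m' \in h$, i.e. $h \in H_{m'}$. For the second conclusion I would appeal to the definition of $\sqcap$: both $m$ and $m'$ have length $n$ and share the core $[\xi_n]_\equiv$, so each of $m \sqcap h$ and $m' \sqcap h$ is by definition the unique element of $[\xi_n]_\equiv \cap\, ]h[$. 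Since $\equiv$ preserves length and $]h[$ contains exactly one element of length $n$, namely $\xi_n$, this intersection is the singleton $\{\xi_n\}$ in both cases, whence $m \sqcap h = \xi_n = m' \sqcap h$.

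The only point requiring care is matching the two moments to the same term $\xi_n$ of the basic sequence, which is exactly what the identification $\alpha = [\xi_n]_\equiv$ via Lemma \ref{structure}.1 accomplishes; once that is in place, both conclusions are a direct reading-off of the definitions of $[S]$ and $\sqcap$, so I do not anticipate a genuine obstacle beyond this bookkeeping.
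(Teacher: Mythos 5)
Your proposal is correct and follows essentially the same route as the paper's own proof: both arguments identify the unique element $\xi$ of $\overrightarrow{m}\cap{]h[}$, observe that $m'=([\xi]_\equiv,|m'|)$ lies in $h=[(]h[)]$ by the explicit description of histories, and then read off $m\sqcap h=\xi=m'\sqcap h$ from the singleton intersection. The only difference is cosmetic bookkeeping (your explicit indexing of $\xi$ as $\xi_n$ and the separate treatment of the degenerate case $m=\dag$, which the paper leaves implicit).
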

\begin{proof}
If $h \in H_m$, then there is an element $\xi \in
\overrightarrow{m} = \overrightarrow{m'}$ such that $\xi \in ]h[$.
For this element we will also have $\xi = m \sqcap h$. Since $\xi
\in \overrightarrow{m'}$, we further get that $m' = ([\xi]_\equiv,
|m'|)$. It follows, by $\xi \in ]h[$, that $m' \in [(]h[)] = h$ so
that $h \in H_{m'}$. Now, consider $\overrightarrow{m'} \cap ]h[$.
We know that this set must be a singleton with $m' \sqcap h$ as
its only element, and we know also that $\{\xi\} =
\overrightarrow{m} \cap ]h[ = \overrightarrow{m'} \cap ]h[$.
Therefore, $m' \sqcap h = \xi = m \sqcap h$ and thus we are done.
\end{proof}
\begin{corollary}\label{propertycorollary}
If $h \in H_m$ and $m = (\overrightarrow{m}, k + 1)$, then for the
$m' = (\overrightarrow{m}, k)$ it is true that $h \in H_{m'}$.
\end{corollary}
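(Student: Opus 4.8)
The plan is to reduce the corollary directly to Lemma~\ref{property1}, of which it is essentially the special case where two moments differ only in their height. The only content beyond that lemma is the trivial observation that $m$ and $m'$, by their very definitions, share the same core.

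First I would check that $m'$ is a genuine element of $Tree$. Since $h \in H_m$ is assumed, the moment $m$ must be standard, and so by the definition of $Tree$ its core $\overrightarrow{m}$ has the form $[\xi]_\equiv$ for some element $\xi$. As $k \in \omega$, it follows at once that $m' = (\overrightarrow{m}, k) = ([\xi]_\equiv, k)$ is again a standard moment lying in $Tree$. Next I would note that $m = (\overrightarrow{m}, k+1)$ and $m' = (\overrightarrow{m}, k)$ have, by construction, the same left projection, i.e. $\overrightarrow{m'} = \overrightarrow{m}$.

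With $h \in H_m$ in hand and the cores of $m$ and $m'$ thus equal, Lemma~\ref{property1} applies verbatim and yields $h \in H_{m'}$ (and, as a by-product, $m \sqcap h = m' \sqcap h$), which is precisely the claim. There is no genuine obstacle here: the substantive work, namely that equal cores force equal sets of containing histories, was already carried out in Lemma~\ref{property1} by way of the characterization of $Hist(\mathcal{M})$ through basic sequences, so the corollary follows immediately once one observes that dropping the height from $k+1$ to $k$ leaves the core untouched.
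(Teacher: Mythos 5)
Your proof is correct and matches the paper's, which likewise derives the corollary as an immediate consequence of Lemma~\ref{property1} via the observation that $m$ and $m'$ share the same core. The extra check that $m'$ is a genuine standard moment of $Tree$ is harmless and sound.
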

\begin{proof} Immediate from Lemma \ref{property1}.
\end{proof}
\begin{corollary}\label{branching}
Let $m \in Tree$ be such that $|m| > 0$, and let $h,h' \in H_m$.
Then $h$ and $h'$ are undivided at $m$.
\end{corollary}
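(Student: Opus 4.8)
The plan is to exploit the fact, already reflected in the order type $(0)\oplus(\omega^\ast\otimes\omega)$ of histories, that within a fixed core the height is ordered \emph{inversely} to $\unlhd$, so that lowering the height by one produces a strictly \emph{later} moment shared by all histories through the given one. Concretely, since $|m| > 0$, I would write $m = (\overrightarrow{m}, k)$ with $k \geq 1$ and set $m' = (\overrightarrow{m}, k - 1)$. As $\overrightarrow{m} = [\xi]_\equiv$ for some element $\xi$ and $k - 1 \in \omega$, the pair $m'$ is again a standard moment of $Tree$.

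First I would check that $m \lhd m'$, so that $m' \rhd m$. This is immediate from clause (2) of the definition of $\lhd$ on standard moments: $m$ and $m'$ share the same core $\overrightarrow{m}$, and $|m'| = k - 1 < k = |m|$. Thus $m'$ is a candidate common moment lying strictly above $m$, which is exactly what the definition of undividedness at $m$ calls for.

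Next I would show that $m'$ lies on every history through $m$. Given any $h \in H_m$, Corollary \ref{propertycorollary} applies verbatim once I write $m$ in the form $(\overrightarrow{m}, (k-1)+1)$ (i.e.\ instantiating the corollary's $k$ to $k-1$), yielding $h \in H_{m'}$. Applying this to both $h$ and $h'$ gives $m' \in h$ and $m' \in h'$, that is, $m' \in h \cap h'$. Combined with $m' \rhd m$, this is precisely the condition for $h$ and $h'$ to be undivided at $m$, and the corollary follows.

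I expect no serious obstacle: the genuine work has already been carried out in establishing the order structure of $Tree$ (Lemma \ref{leq} and Lemma \ref{structure}) and in Corollary \ref{propertycorollary}. The only point that requires care is the direction of the inequality between heights — it is precisely because the height component is inverted relative to $\unlhd$ that decreasing the height yields a later moment, and this is what secures a common continuation above every moment of positive height, hence the undividedness of all histories there.
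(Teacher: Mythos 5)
Your proof is correct and is essentially identical to the paper's own argument: both take $m' = (\overrightarrow{m}, |m|-1)$, observe $m \lhd m'$ from clause (2) of the definition of $\lhd$ (same core, smaller height), and invoke Corollary \ref{propertycorollary} to place every history through $m$ also through $m'$, which witnesses undividedness. No gaps.
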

\begin{proof}
Since $|m| > 0$, we know that $m = k + 1$ for some $k \in \omega$.
Then, by Corollary \ref{propertycorollary}, we must have $h,h' \in
H_{m'}$ for $m' = (\overrightarrow{m}, k)$. It remains to notice
that we clearly have $m \lhd m'$.
\end{proof}

\subsection{$Choice$}

We now define the choice structures of our canonical model:
\begin{itemize}
    \item $Choice^m_j(h) = \{ h' \mid h' \in H_m\,
    (\forall A \in Form)([j]A \in end(h \sqcap m) \Rightarrow A \in end(h' \sqcap
    m))\}$, if $m \neq \dag$ and $|m| = 0$;
    \item $Choice^m_j = H_m$, otherwise.
\end{itemize}
Since for every $j \in Ag$, $[j]$ is an S5-modality, $Choice$
induces a partition on $H_m$ for every given $m \in Tree$. We
check that the choice function verifies the relevant semantic
constraints:
\begin{lemma}\label{choice}
The tuple $\langle Tree, \unlhd, Choice\rangle$, as defined above,
verifies both independence of agents and no choice between
undivided histories constraints.
\end{lemma}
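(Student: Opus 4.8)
The plan is to split on the structure of $m$. When $Choice^m_j$ is the trivial one-cell partition --- that is, when $m=\dag$ or $|m|>0$ --- both constraints are immediate: every admissible $f(j)$ must equal $H_m$, so $\bigcap_{j\in Ag}f(j)=H_m$, which is nonempty since every moment lies on some history (for standard moments by Lemma \ref{sqcap}, and $\dag$ lies on every history); and for no choice between undivided histories we get $Choice^m_j(h)=H_m=Choice^m_j(h')$ trivially. Hence all the work is in the case where $m$ is standard with $|m|=0$; let $n$ denote the length of $m$.

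For no choice between undivided histories in this case, the key step is to show that if $h,h'\in H_m$ both pass through some $m'\rhd m$, then $m\sqcap h = m\sqcap h'$; once this is in place, $end(m\sqcap h)=end(m\sqcap h')$, and the two cells coincide directly by the definition of $Choice^m_j$. To get the equality of intersections I would argue that, since $|m|=0$, clause (2) in the definition of $\lhd$ is unavailable, so $m\lhd m'$ holds by clause (1): some $\xi\in\overrightarrow{m}$ is a proper initial segment of every element of $\overrightarrow{m'}$. Now $m\sqcap h$ is the length-$n$ member $\xi_n$ of the basic sequence $]h[$, hence a proper initial segment of the longer element $m'\sqcap h\in\overrightarrow{m'}$; and $\xi$, being of length $n$ and a proper initial segment of every element of $\overrightarrow{m'}$, is likewise the length-$n$ initial segment of $m'\sqcap h$. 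By uniqueness of initial segments of a given length, $m\sqcap h=\xi$, and the same argument gives $m\sqcap h'=\xi$, so $m\sqcap h=m\sqcap h'$ as required.

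The independence of agents constraint is the main obstacle, and it is where axiom \eqref{A3} is used. Given admissible $f$ with $f(j)=Choice^m_j(h_j)$ for some $h_j\in H_m$, write $\Gamma^{(j)}=end(m\sqcap h_j)$ and fix $\Gamma_n=end(m\sqcap h)$ for some $h\in H_m$; all these maxiconsistent sets lie in the single class $\overrightarrow{m}$ and therefore, by the S5-properties of $\Box$, share exactly the same $\Box$-formulas. I would seek a maxiconsistent $\Delta$ that (i) contains every $\Box B\in\Gamma_n$ and (ii) contains $A$ whenever $[j]A\in\Gamma^{(j)}$ for some $j$. Condition (i) already pins down the $\Box$-formulas of $\Delta$ to be exactly those of $\Gamma_n$ (using $\vdash\Diamond C\to\Box\Diamond C$), so $(\Gamma_1,\ldots,\Gamma_{n-1},\Delta)\equiv\Gamma_n$; checking that this tuple is genuinely an element --- i.e. the $K$- and $E$-coherence conditions of Definition \ref{element} between $\Gamma_{n-1}$ and $\Delta$, vacuous when $n=1$ --- uses Lemma \ref{theorems}.2 together with the T- and 4-properties of $\Box$. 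Then $(\Gamma_1,\ldots,\Gamma_{n-1},\Delta)\in\overrightarrow{m}$, and Lemma \ref{sqcap} supplies an $h^\ast\in H_m$ with $end(m\sqcap h^\ast)=\Delta$; by (ii), $h^\ast\in f(j)$ for every $j$, so $h^\ast\in\bigcap_{j\in Ag}f(j)$.

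The crux is thus the consistency of $\Theta=\{\Box B\mid \Box B\in\Gamma_n\}\cup\{A\mid \exists j\,[j]A\in\Gamma^{(j)}\}$. Assuming it fails, I would use normality of each $[j]$ to collapse, for each contributing agent $j$, its finitely many formulas into one $A^j$ with $[j]A^j\in\Gamma^{(j)}$, and the $\Box B$'s into one $\Box B\in\Gamma_n$, obtaining $\vdash (A^{j_1}\wedge\ldots\wedge A^{j_r})\to\neg\Box B$ with $j_1,\ldots,j_r$ pairwise distinct. On the other side, from $[j_i]A^{j_i}\in\Gamma^{(j_i)}$ and $\vdash\varphi\to\Diamond\varphi$ I get $\Diamond[j_i]A^{j_i}\in\Gamma^{(j_i)}$, which is S5-equivalent to the $\Box$-formula $\Box\Diamond[j_i]A^{j_i}$ and hence lies in $\Gamma_n$ as well, so $\Diamond[j_i]A^{j_i}\in\Gamma_n$ for each $i$. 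Axiom \eqref{A3} then yields $\Diamond([j_1]A^{j_1}\wedge\ldots\wedge[j_r]A^{j_r})\in\Gamma_n$, and the reflexivity axioms $\vdash [j_i]A^{j_i}\to A^{j_i}$ give $\Diamond(A^{j_1}\wedge\ldots\wedge A^{j_r})\in\Gamma_n$. Combined with $\vdash (A^{j_1}\wedge\ldots\wedge A^{j_r})\to\neg\Box B$ (after applying $\Diamond$ and collapsing $\Diamond\Diamond$ to $\Diamond$) this forces $\neg\Box B\in\Gamma_n$, contradicting $\Box B\in\Gamma_n$. Hence $\Theta$ is consistent, $\Delta$ and $h^\ast$ exist, and independence of agents holds.
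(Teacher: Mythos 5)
Your proof is correct and follows essentially the same route as the paper's: the same split into vacuous and height-$0$ cases, the same reduction of ``no choice between undivided histories'' to showing $m\sqcap h=m\sqcap h'$, and the same strategy for independence of agents (a candidate set built from the shared $\Box$-formulas of $\overrightarrow{m}$ plus the agents' $[j]$-data, consistency via \eqref{A3}, extension to a maxiconsistent set, verification of element-hood via Lemma \ref{theorems}.2, and Lemma \ref{sqcap} to produce the witnessing history). The only cosmetic difference is that you place the de-necessitated formulas $A$ into your set $\Theta$ and land the contradiction as $\neg\Box B\in\Gamma_n$, whereas the paper keeps the formulas $[j]A_j$ and derives $\Diamond\bot$ in the end set; both are sound.
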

\begin{proof}
We first tackle \textbf{no choice between undivided histories}.
Consider a moment $m$ and two histories $h, h' \in H_m$ such that
$h$ and $h'$ are undivided at $m$. Since the agents' choices are
only non-vacuous at moments represented by standard moments of
height $0$, we may safely assume that $m$ is such a moment. Since
$h$ and $h'$ are undivided at $m$, this means that there is a
moment $m'$ such that $m \lhd m'$ and $m'$ is shared by $h$ and
$h'$. Hence we know that also $m'$ is standard. Suppose the length
of $m$ is $n$ and the length of $m'$ is $n'$. Then $n < n'$ since
$m$ is of height $0$ and therefore has no equivalence classes of
elements of length $n$ above itself. Therefore, $h \sqcap m$ is
the initial segment of length $n$ of $h \sqcap m'$, and similarly,
$h' \sqcap m$ is the initial segment of length $n$ of $h' \sqcap
m'$. But both $h \sqcap m'$ and $h' \sqcap m'$ are, by definition,
in $\overrightarrow{m'}$, therefore, they must share the greatest
proper initial segment. Hence, their initial segments of length
$n$ must coincide as well, and we must have $h \sqcap m = h'
\sqcap m$, whence $end(h \sqcap m) = end(h' \sqcap m)$. Now, if $j
\in Ag$ and $[j]A \in end(h \sqcap m)$, then, by \eqref{A1} and
maxiconsistency of $end(h \sqcap m)$, we will have also $A \in
end(h \sqcap m) = end(h' \sqcap m)$, and thus $h' \in
Choice^m_j(h)$, so that $Choice^m_j(h) = Choice^m_j(h')$ since
$Choice$ is a partition of $H_m$.

Consider, next, the \textbf{independence of agents}. Let $m \in
Tree$ and let $f$ be a function on $Ag$ such that $\forall j \in
Ag(f(j) \in Choice^m_j)$. We are going to show that in this case
$\bigcap_{j \in Ag}f(j) \neq \emptyset$. If $m$ is not a standard
moment of height $0$, then this is obvious, since every agent will
have a vacuous choice. We treat the case when $m$ is a standard
moment of height $0$. Assume that $m = ([(\Gamma_1,\ldots,
\Gamma_{n + 1})]_\equiv,0)$. By \eqref{A1} we know that there is a
set $\Delta$ of formulas of the form $\Box A$ which is shared by
all sets of the form $end(\xi)$ with $\xi \in \overrightarrow{m}$
in the sense that if $\xi \in \overrightarrow{m}$, then $\Box A
\in end(\xi)$ iff $\Box A \in \Delta$. By the same axiom scheme
and Lemma \ref{sqcap}, we also know that for every $j \in Ag$
there is set $\Delta_j$ of formulas of the form $[j]A$ which is
shared by all sets of the form $end(\xi)$ such that $\exists h(h
\in f(j) \wedge \xi = m \sqcap h)$. More precisely:
$$
\xi \in \overrightarrow{m} \Rightarrow (\exists h(h \in f(j)
\wedge \xi = m \sqcap h) \Leftrightarrow (\forall A \in Form)([j]A
\in end(\xi) \Leftrightarrow [j]A \in \Delta_j)).
$$

We now consider the set $\Delta \cup \bigcup\{ \Delta_j\mid j \in
Ag \}$ and show its consistency. Indeed, if this set is
inconsistent, then, wlog, we would have a provable formula of the
following form:

\begin{equation}\label{E:c1}
\vdash (\Box A \wedge \bigwedge_{j \in Ag}[j]A_j) \to \bot.
\end{equation}

But then, choose for every $j \in Ag$ an element $\xi_j \in
\overrightarrow{m}$ such that
$$
(\forall A \in Form)([j]A \in end(\xi_j) \Leftrightarrow [j]A \in
\Delta_j).
$$
This is possible, since we may simply choose an arbitrary $h_j \in
f(j)$ and set $\xi_j: = m \sqcap h_j$. Then we will have $[j]A_j
\in \xi_j$ for every $j \in Ag$. Next, consider $\Gamma_{n + 1}$.
Since

\noindent$m = ([(\Gamma_1,\ldots, \Gamma_{n + 1})]_\equiv,0)$ and
$\Box$ is an S5-modality, we must have:
$$
\{ \Diamond[j]A_j \in Ag \} \subseteq \Gamma_{n + 1},
$$
whence, by Lemma \ref{elementaryconsistency}.5:
$$
\bigwedge_{j \in Ag}\Diamond[j]A_j \in \Gamma_{n + 1},
$$
and further, by \eqref{A3} and Lemma
\ref{elementaryconsistency}.4:
$$
\Diamond\bigwedge_{j \in Ag}[j]A_j \in \Gamma_{n + 1}.
$$
Also, by definition of $\Delta$ and the fact that
$(\Gamma_1,\ldots, \Gamma_{n + 1}) \in \overrightarrow{m}$, we get
successively:
$$
\Box A \in  \Gamma_{n + 1},
$$
then, by Lemma \ref{elementaryconsistency}.5:
$$
\Box A \wedge \Diamond\bigwedge_{j \in Ag}[j]A_j \in \Gamma_{n +
1},
$$
and finally, by the fact that $\Box$ is an S5-modality:

\begin{equation}\label{E:c2}
\Diamond(\Box A \wedge \bigwedge_{j \in Ag}[j]A_j) \in \Gamma_{n +
1}.
\end{equation}

From \eqref{E:c1}, together with \eqref{E:c2}, it follows by S5
reasoning for $\Box$ that $\Diamond\bot \in \Gamma_{n + 1}$, so
that, again by S5 properties of $\Box$ and Lemma
\ref{elementaryconsistency}.4, it follows that $\bot \in \Gamma_{n
+ 1}$, which is in contradiction with maxiconsistency of
$\Gamma_{n + 1}$.

Hence $\Delta \cup \bigcup\{ \Delta_j\mid j \in Ag \}$ is
consistent, and we can extend it to a maxiconsistent $\Xi$. We now
consider $(\Gamma_1,\ldots, \Gamma_n, \Xi)$ and show that it is in
fact an element. Indeed, if $KA \in \Gamma_n$, then $KA \in
\Gamma_{n + 1}$ by definition of an element. But then $\Box KA \in
\Gamma_{n + 1}$ by Lemma \ref{theorems}.2 and maxiconsistency of
$\Gamma_{n + 1}$, whence $\Box KA \in \Delta$ and, therefore,
$\Box KA \in \Xi$. By \eqref{A1} and maxiconsistency of $\Xi$ we
get then $KA \in \Xi$. Similarly, if $Et \in \Gamma_n$, then $\Box
Et \in \Gamma_{n + 1}$ by definition of an element. But then $\Box
Et \in \Delta$ and, therefore, $\Box Et \in \Xi$.

Therefore, $(\Gamma_1,\ldots, \Gamma_n, \Xi)$ is an element and
since, moreover, $\Delta \subseteq \Xi$, then also
$(\Gamma_1,\ldots, \Gamma_n, \Xi) \in \overrightarrow{m}$ so that
$m = ([(\Gamma_1,\ldots, \Gamma_n, \Xi)]_\equiv,0)$. Using Lemma
\ref{sqcap}, we can choose a $g \in H_m$ such that $g \sqcap m =
(\Gamma_1,\ldots, \Gamma_n, \Xi)$. We also know that for every $j
\in Ag$, there is a history $h_j \in f(j)$ such that $h_j \sqcap m
= \xi_j$ by the choice of $\xi_j$. Therefore, for every $j \in
Ag$, $Choice^m_j(h_j) = f(j)$. Also, if $[j]A \in end(\xi_j) =
end(h_j \sqcap m)$, then $[j]A \in \Delta_j$, hence $[j]A \in \Xi
= end(g \sqcap m)$, therefore, by \eqref{A1}, $A \in end(g \sqcap
m)$. Thus we get that $g \in \bigcap_{j \in Ag}Choice^m_j(h_j) =
\bigcap_{j \in Ag}f(j)$ so that the independence of agents is
verified.
\end{proof}

\subsection{$R$ and $\mathcal{E}$}

We now define the justifications-related elements of our canonical
model. We first define $R$ as follows:
\begin{itemize}
    \item $R(([(\Gamma_1,\ldots,\Gamma_n, \Gamma)]_\equiv, k), m')\Leftrightarrow$

     $\qquad\qquad\qquad\quad\Leftrightarrow(m' \neq \dag)\&(\forall
    \tau \in \overrightarrow{m'})(\forall A \in Form)(KA \in \Gamma \Rightarrow KA \in
    end(\tau))$;
    \item $R(\dag,m)$, for all $m \in Tree$.
\end{itemize}

Now, for the definition of $\mathcal{E}$:
\begin{itemize}
    \item For
    all $t \in Pol$: $\mathcal{E}(\dag, t) = \{ A \in Form \mid
\vdash t\co A \}$;

\item For all $t \in Pol$ and $m \neq \dag$:
\begin{align*}
    &(\forall A \in Form)(A \in \mathcal{E}(m, t)
    \Leftrightarrow (\forall \xi \in \overrightarrow{m})(t\co A \in end(\xi))).
\end{align*}
\end{itemize}
We start by mentioning a straightforward corollary to the above
definition:
\begin{lemma}\label{proven}
For all $m \in Tree$ and $t \in Pol$ it is true that $\{ A \in
Form \mid \vdash t\co A \} \subseteq \mathcal{E}(m,t)$.
\end{lemma}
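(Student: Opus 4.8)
The plan is to split the argument according to the two clauses in the definition of $\mathcal{E}$, i.e. according to whether $m = \dag$ or $m$ is a standard moment. The case $m = \dag$ is immediate: by definition $\mathcal{E}(\dag, t) = \{ A \in Form \mid \vdash t\co A \}$, so the claimed inclusion is in fact an equality and nothing further is required.

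For the case $m \neq \dag$, I would argue as follows. Fix any $A$ with $\vdash t\co A$. By the defining clause for standard moments, it suffices to show that $t\co A \in end(\xi)$ for every $\xi \in \overrightarrow{m}$. Now each $\xi \in \overrightarrow{m}$ is an element, so by Definition \ref{element} its end component $end(\xi)$ is maxiconsistent. The key observation, which I would isolate as the crux of the proof, is that every maxiconsistent set contains all theorems of $\Sigma$. Indeed, suppose towards a contradiction that $t\co A \notin end(\xi)$; then by Lemma \ref{elementaryconsistency}.2 we would have $\neg(t\co A) \in end(\xi)$. But $\vdash t\co A$ yields, by \eqref{A0} and \eqref{R1}, that $\vdash \neg(t\co A) \to \bot$, so the single formula $\neg(t\co A) \in end(\xi)$ already witnesses the inconsistency of $end(\xi)$ (in the sense of the inconsistency definition adopted just before Lemma \ref{elementaryconsistency}), contradicting its maxiconsistency. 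Hence $t\co A \in end(\xi)$.

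Since $\xi \in \overrightarrow{m}$ was arbitrary, we conclude that $t\co A \in end(\xi)$ holds for all $\xi \in \overrightarrow{m}$, and therefore $A \in \mathcal{E}(m,t)$ by the definition of $\mathcal{E}$ at standard moments. As $A$ was an arbitrary formula with $\vdash t\co A$, this establishes $\{ A \in Form \mid \vdash t\co A \} \subseteq \mathcal{E}(m,t)$ in the remaining case, completing the proof. I do not anticipate any genuine obstacle here: the only point requiring care is the routine verification that theorems belong to every maxiconsistent set, which is handled entirely by the non-standard inconsistency definition together with Lemma \ref{elementaryconsistency}.2; everything else is bookkeeping over the two clauses defining $\mathcal{E}$.
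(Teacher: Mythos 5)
Your proposal is correct and follows exactly the paper's own argument: the $m=\dag$ case is immediate from the definition of $\mathcal{E}$, and for standard $m$ one observes that every $end(\xi)$ with $\xi \in \overrightarrow{m}$ is maxiconsistent and hence contains every theorem, in particular $t\co A$. The only difference is that you spell out the routine verification (via Lemma \ref{elementaryconsistency}.2 and the paper's notion of inconsistency) that the paper leaves implicit.
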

\begin{proof}
This holds simply by the definition of $\mathcal{E}$ when $m =
\dag$. If $m \neq \dag$, then, for every $\xi \in
\overrightarrow{m}$, $end(\xi)$ is a maxiconsistent subset of
$Form$ and must contain every provable formula.
\end{proof}

Note that since we know that for every instance $A$ of one of
axiom schemes in the list \eqref{A0}--\eqref{A9}, it is true that
$\vdash c\co A$ for every $c \in PConst$ (by \eqref{R3}), it
follows, among other things, that the above-defined function
$\mathcal{E}$ satisfies the additional normality condition on
jstit models.

\begin{lemma}\label{r}
The relation $R$, as defined above, is a preorder on $Tree$, and,
together with $\unlhd$, verifies the future always matters
constraint.
\end{lemma}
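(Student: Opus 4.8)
The plan is to prove the three required properties—reflexivity, transitivity, and the inclusion $\unlhd \subseteq R$—after first establishing that the definition of $R$ is independent of the representative chosen for the core of its first argument. This well-definedness check is, I expect, the conceptual heart of the argument. The defining clause refers to $\Gamma$, the $end$ of a specific representative $(\Gamma_1,\ldots,\Gamma_n,\Gamma)$ of $\overrightarrow{m}$, and a different representative of the same $\equiv$-class may carry a different final component. So first I would show that for any two representatives with final components $\Gamma$ and $\Gamma'$ one has $\{A \mid KA \in \Gamma\} = \{A \mid KA \in \Gamma'\}$: if $KA \in \Gamma$, then by Lemma \ref{theorems}.2 and maxiconsistency $\Box KA \in \Gamma$, whence $KA \in \Gamma'$ by the definition of $\equiv$, and the converse is symmetric. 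This lets me attach to every standard moment $m$ the representative-independent set $K(m) = \{A \mid KA \in end(\xi)\}$ for any $\xi \in \overrightarrow{m}$, so that the defining condition collapses to the clean form: for standard $m$, $R(m,m')$ holds iff $m' \neq \dag$ and $K(m) \subseteq K(m')$, while $R(\dag, m)$ holds for every $m$.

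With this reformulation, reflexivity and transitivity become routine. For reflexivity, $R(\dag,\dag)$ is immediate from the second clause, and for standard $m$ we have $m \neq \dag$ and $K(m) \subseteq K(m)$. For transitivity, suppose $R(m,m')$ and $R(m',m'')$. If $m = \dag$ the conclusion $R(\dag, m'')$ is immediate; otherwise $R(m,m')$ gives $m' \neq \dag$ and $K(m) \subseteq K(m')$, so $m'$ is standard and $R(m',m'')$ gives $m'' \neq \dag$ and $K(m') \subseteq K(m'')$, whence $K(m) \subseteq K(m'')$ and $R(m,m'')$ follows.

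For the \emph{future always matters} constraint I would show $m \unlhd m' \Rightarrow R(m,m')$. The case $m = \dag$ is handled by the second clause, and when $m$ is standard and $m \unlhd m'$ then $m' \neq \dag$, since $\dag$ is the $\unlhd$-least moment. Assuming $m \neq m'$, so that $m \lhd m'$, I would split into the two cases of the definition of $\lhd$. In case (2) we have $\overrightarrow{m} = \overrightarrow{m'}$, so $K(m) = K(m')$ trivially and $R(m,m')$ follows. In case (1) there is a $\xi \in \overrightarrow{m}$ that is a proper initial segment of every element of $\overrightarrow{m'}$; choosing any $\tau \in \overrightarrow{m'}$, it extends $\xi$, and the forward-persistence of $K$-formulas built into Definition \ref{element} (if $KA \in \Gamma_k$ then $KA \in \Gamma_{k+1}$) propagates membership from $end(\xi)$ up to $end(\tau)$, giving $K(m) \subseteq K(m')$ and hence $R(m,m')$.

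The main obstacle, as noted, is the well-definedness step: without the observation that $KA \to \Box KA$ forces the $K$-fragment of the final component to be $\equiv$-invariant, the relation $R$ would not even be properly defined, and every subsequent line silently depends on this. The remaining work is a careful but mechanical case analysis on the two clauses defining $\lhd$, combined with the elementary persistence property of $K$-formulas along elements.
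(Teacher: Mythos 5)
Your proof is correct and follows essentially the same route as the paper's: the preorder part reduces to set inclusion once Lemma \ref{theorems}.2 and the definition of $\equiv$ are invoked, and the future-always-matters check splits into the same two cases on the definition of $\lhd$, using the $K$-persistence clause of Definition \ref{element} in case (1). Your explicit well-definedness step is a welcome refinement --- the paper handles the dependence on the chosen representative only implicitly, via a ``wlog'' in case (1) --- but it rests on exactly the same ingredients, so the argument is not substantively different.
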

\begin{proof}
It is straightforward to check that $R$, as defined above, is a
preorder on $Tree$, using \eqref{A7} and \eqref{A8}. Let us look
into why future always matters constraint is verified as well.
Assume $m \in Tree$. If $m = \dag$, then it is connected to all
the elements in $Tree$ by both $\unlhd$ and $R$, so this moment
cannot falsify the constraint. Let us assume that $m \neq \dag$,
say $m = ([(\Gamma_1,\ldots, \Gamma_n)]_\equiv,k)$. If $m \unlhd
m'$, then $m'$ must be also standard. Now, if $\overrightarrow{m}
= \overrightarrow{m'}$ and $KA \in \Gamma_n$, then, by
maxiconsistency of $\Gamma_n$ and Lemma \ref{theorems}.2, we must
also have $\Box KA \in \Gamma_n$, which, by definition of
$\equiv$, means that $KA \in end(\xi)$ for every $\xi
\in\overrightarrow{m} = \overrightarrow{m'}$, and thus we get that
$R(m,m')$. The other option is that $(\Gamma_1,\ldots, \Gamma_n)$
is a proper initial segment of every element in $m'$, so that we
may assume, wlog, that $m' = ([(\Gamma_1,\ldots,
\Gamma_{n'})]_\equiv,k')$ for some $n' > n$. But then take an
arbitrary $A \in Form$. If $KA \in \Gamma_n$, then, since
$(\Gamma_1,\ldots, \Gamma_{n'})$ is an element, $KA \in
\Gamma_{n'}$. Moreover,  by maxiconsistency of $\Gamma_{n'}$ and
Lemma \ref{theorems}.2, we will have $\Box KA \in \Gamma_{n'}$.
Now, by definition of $\equiv$, we get $KA \in end(\tau)$ for any
given $\tau \in \overrightarrow{m'}$. It follows that, again, we
have $R(m, m')$ as desired.
\end{proof}

We further check that the semantical constraints for $\mathcal{E}$
are verified:
\begin{lemma}\label{e}
The function $\mathcal{E}$, as defined above, satisfies both
monotonicity of evidence and evidence closure properties.
\end{lemma}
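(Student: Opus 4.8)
The plan is to verify the two families of constraints separately, and in each case to split on whether the moments involved equal the root $\dag$ or are standard, reducing everything either to provability facts (at $\dag$) or to membership facts about the maxiconsistent sets $end(\xi)$ (at standard moments).

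For the evidence closure properties I expect no real difficulty, and I would argue each of the three clauses pointwise. When $m = \dag$, the hypotheses translate directly into the provability statements $\vdash s\co(A\to B)$, $\vdash t\co A$ (for clause (a)), $\vdash s\co A$ or $\vdash t\co A$ (for clause (b)), and $\vdash t\co A$ (for clause (c)); these unwind via \eqref{A4}, \eqref{A6}, and \eqref{A5} respectively, together with \eqref{R1}, into the required provability of $(s\times t)\co B$, $(s+t)\co A$, and $!t\co(t\co A)$, which by definition of $\mathcal{E}(\dag,\cdot)$ gives the conclusions. When $m \neq \dag$ the very same axioms are applied inside each $end(\xi)$ with $\xi \in \overrightarrow{m}$: since every $end(\xi)$ is maxiconsistent, from the hypotheses holding at every such $\xi$ one concludes by Lemma \ref{elementaryconsistency}.4 that $(s\times t)\co B$ (resp. $(s+t)\co A$, resp. $!t\co(t\co A)$) lies in every $end(\xi)$, which is exactly the membership required for $\mathcal{E}(m,\cdot)$.

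For monotonicity of evidence, recalling that $R_e = R$ in the unirelational canonical model, I would show that $R(m,m')$ implies $\mathcal{E}(m,t) \subseteq \mathcal{E}(m',t)$ for every $t$. If $m = \dag$ the claim is immediate: either $m' = \dag$ and the two sets coincide, or $m'$ is standard and each theorem $t\co A$ belongs to every maxiconsistent $end(\tau)$. If $m$ is standard, then by the definition of $R$ the moment $m'$ cannot be $\dag$, so $m'$ is standard as well, and the crucial step is to transport a relativized-proof formula through the epistemic relation. Starting from $A \in \mathcal{E}(m,t)$, i.e. $t\co A \in end(\xi)$ for all $\xi \in \overrightarrow{m}$, I would first use $\vdash t\co A \to K(t\co A)$ — precisely the intermediate consequence obtained by applying \eqref{A5} twice, already appearing in the proof of Lemma \ref{theorems}.1 — so that $K(t\co A)$ belongs to the end element $\Gamma$ parametrizing $m$. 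The definition of $R$ then pushes $K(t\co A)$ into every $end(\tau)$ with $\tau \in \overrightarrow{m'}$, and finally the reflexivity part of \eqref{A7} (the schema $KB \to B$) yields $t\co A \in end(\tau)$ for all such $\tau$, i.e. $A \in \mathcal{E}(m',t)$.

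The one point that needs care — and what I regard as the main obstacle — is ensuring that the clause ``$KA \in \Gamma$'' in the definition of $R$ is genuinely a property of the core $\overrightarrow{m}$ rather than of the representative end element $\Gamma$, so that the transfer of $K(t\co A)$ under $R$ is unambiguous. This is exactly where Lemma \ref{theorems}.2 ($\vdash KA \to \Box KA$) combined with the $S5$-based definition of $\equiv$ does the work: any $K$-formula present in one end element is boxed there, hence shared by all end elements of the class. Once this observation is in place, the remaining manipulations are routine maxiconsistency bookkeeping, and both constraints follow.
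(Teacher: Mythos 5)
Your proof is correct and follows essentially the same route as the paper's: the closure clauses are checked pointwise via \eqref{A4}--\eqref{A6} (at $\dag$ through provability, at standard moments through maxiconsistency of each $end(\xi)$), and monotonicity is obtained by deriving $\vdash t\co A \to Kt\co A$ from \eqref{A5}, pushing $Kt\co A$ along $R$ by its definition, and then using factivity of $K$ from \eqref{A7}. Your extra remark about $R$ being well defined on cores via Lemma \ref{theorems}.2 is a sound observation the paper leaves implicit, but it does not change the argument.
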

\begin{proof}
We start with the \textbf{monotonicity of evidence}. Assume
$R(m,m')$ and $t \in Pol$. If $m = \dag$, then, by Lemma
\ref{proven}, $\mathcal{E}(m,t) = \{ A \in Form \mid \vdash t\co A
\} \subseteq \mathcal{E}(m',t)$ for any $m' \in Tree$.

Assume, further, that $m$ is standard. Let $t \in Pol$ and $A \in
Form$ be such that $A \in \mathcal{E}(m,t)$. Then, for every $\xi
\in \overrightarrow{m}$, $t\co A \in end(\xi)$, and, by Lemma
\ref{theorems}.1, also $Kt\co A \in end(\xi)$. Therefore, by
$R(m,m')$, we get that, for every $\tau \in \overrightarrow{m'}$,
$Kt\co A \in end(\tau)$, so that, by \eqref{A7} and
maxiconsistency of every $end(\tau)$, also $t\co A \in end(\tau)$.
Therefore, $A \in \mathcal{E}(m',t)$, as desired.

We turn now to the \textbf{closure conditions}. We verify the
first two conditions, and the third one can be verified in a
similar way, restricting attention to $t$ rather than considering
both $s$ and $t$. Let $s, t \in Pol$. We need to consider two
cases:

\emph{Case 1}. $m = \dag$. If $A \in \mathcal{E}(m,s)$, then
$\vdash s\co A$. Therefore, by \eqref{A6}, we must also have
$\vdash (s + t)\co A$ so that $A \in \mathcal{E}(m,s + t)$.
Similarly, if $A \in \mathcal{E}(m,t)$, then also $A \in
\mathcal{E}(m,s + t)$ and the closure constraint (b) is verified.
If, on the other hand, it is true that for some $A, B \in Form$ we
have both $A \to B \in \mathcal{E}(m,s)$ and $A \in
\mathcal{E}(m,t)$, then, again, this means that both $\vdash s\co
A \to B$ and $\vdash t\co A$. By \eqref{A4}, it follows that
$\vdash s\times t\co B$ and, therefore, also $B \in
\mathcal{E}(m,s\times t)$, so that the closure condition (a) is
also verified.

\emph{Case 2}. $m \neq \dag$. If $A \in Form$ and  $A \in
\mathcal{E}(m,s)$, then, for every $\xi \in \overrightarrow{m}$,
$s\co A \in end(\xi)$, and, by \eqref{A6} and maxiconsistency of
every $end(\xi)$, we get that $s + t\co A \in end(\xi)$.
Therefore, $A \in \mathcal{E}(m,s + t)$. Similarly, if $A \in
\mathcal{E}(m,t)$, then $A \in \mathcal{E}(m,s + t)$ as well, and
closure condition (b) is verified.

On the other hand, if $A, B \in Form$ and we have both $A \to B
\in \mathcal{E}(m,s)$ and $A \in \mathcal{E}(m,t)$, then, for
every $\xi \in \overrightarrow{m}$, we have $t\co A, s\co(A \to B)
\in end(\xi)$. By \eqref{A4} and maxiconsistency of every
$end(\xi)$, we get that $s \times t\co B \in end(\xi)$, thus $B
\in \mathcal{E}(m,s \times t)$, and closure condition (a) is
verified.
\end{proof}

\subsection{$Act$ and $V$}

It only remains to define $Act$ and $V$ for our canonical model,
and we define them as follows:
\begin{itemize}
    \item $(m,h) \in V(p) \Leftrightarrow p \in end(m \sqcap h)$,
    for all $p \in Var$;
    \item $Act(\dag, h) = \emptyset$ for all $h \in Hist(\mathcal{M})$;
    \item $Act(m,h) = \{ t \in Pol \mid Et \in end(m \sqcap h)
    \}$, if $m \neq \dag$, $|m| = 0$ and $h \in H_m$;
    \item $Act(m,h) = \{ t \in Pol \mid Et \in \bigcap_{g \in H_m}end(m \sqcap g)
    \}$, if $m \neq \dag$, $|m| > 0$ and $h \in H_m$
\end{itemize}

We first draw some of the immediate consequences of the above
definitions:
\begin{lemma}\label{act-technical}
Assume that $m \in Tree \setminus \{ \dag \}$ and $t \in Pol$.
Then the following statements are true:
\begin{enumerate}
\item $Et \in \bigcap_{h \in H_m}(end(m \sqcap h)) \Leftrightarrow
t \in \bigcap_{h \in H_m}(Act(m,h))$;

\item If $|m| > 0$ and $h, h' \in H_m$, then $Act(m,h) =
Act(m,h')$;

\item If $h,h' \in H_m$ and $m \sqcap h = m \sqcap h'$, then
$Act(m,h) = Act(m,h')$.
\end{enumerate}
\end{lemma}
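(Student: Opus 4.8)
The plan is to establish all three statements by a case distinction on the height $|m|$, reading each claim directly off the definition of $Act$. Since $m \neq \dag$, exactly one of the two nontrivial defining clauses for $Act$ applies: the clause for $|m| = 0$, where $Act(m,h) = \{ t \mid Et \in end(m \sqcap h) \}$ depends on $h$ only through the intersection $m \sqcap h$, and the clause for $|m| > 0$, where $Act(m,h) = \{ t \mid Et \in \bigcap_{g \in H_m} end(m \sqcap g) \}$ does not depend on $h$ at all. I would also record at the outset that $H_m \neq \emptyset$, which follows from Lemma~\ref{sqcap}, since this is needed to keep the intersections over $H_m$ from collapsing vacuously to all of $Pol$.

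I would prove Part 2 first, as it is immediate: when $|m| > 0$ the right-hand side of the defining clause makes no mention of $h$, so $Act(m,h)$ and $Act(m,h')$ are literally the same set for any $h, h' \in H_m$. Part 3 then splits into two subcases. If $|m| > 0$, the conclusion is already contained in Part 2, irrespective of the hypothesis $m \sqcap h = m \sqcap h'$. If $|m| = 0$, the hypothesis yields $end(m \sqcap h) = end(m \sqcap h')$, and since in this case $Act(m,h)$ is determined solely by the set $end(m \sqcap h)$ via the condition $Et \in end(m \sqcap h)$, the two action sets coincide.

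For Part 1, I would again distinguish the two cases. When $|m| = 0$, unwinding the membership conditions gives the chain of equivalences $t \in \bigcap_{h \in H_m} Act(m,h) \Leftrightarrow (\forall h \in H_m)(t \in Act(m,h)) \Leftrightarrow (\forall h \in H_m)(Et \in end(m \sqcap h)) \Leftrightarrow Et \in \bigcap_{h \in H_m} end(m \sqcap h)$, which is exactly the desired biconditional. When $|m| > 0$, the defining clause states directly that $t \in Act(m,h) \Leftrightarrow Et \in \bigcap_{g \in H_m} end(m \sqcap g)$ for every $h$; here the only point requiring care --- and the closest thing to an obstacle in this otherwise routine lemma --- is that I must invoke $H_m \neq \emptyset$ to conclude that $\bigcap_{h \in H_m} Act(m,h)$ equals the common value $Act(m, h_0)$ for any fixed $h_0 \in H_m$, rather than being the whole of $Pol$ vacuously. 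With that in hand, the right-hand condition $t \in \bigcap_{h \in H_m} Act(m,h)$ reduces to $Et \in \bigcap_{g \in H_m} end(m \sqcap g)$, again matching the left-hand side.
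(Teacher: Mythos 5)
Your proof is correct and follows essentially the same route as the paper's: all three parts are read directly off the two defining clauses of $Act$, with Part 3 reduced to Part 2 when $|m|>0$. The only cosmetic differences are that the paper handles Part 1 uniformly in the height (noting both implications hold ``whatever the height of $m$ is'') rather than by case split, and that your appeal to $H_m \neq \emptyset$ is not strictly needed for the biconditional in Part 1, since both sides would hold vacuously were $H_m$ empty.
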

\begin{proof}
(Part 1). Let $g \in H_m$ be arbitrary. If $Et \in \bigcap_{h \in
H_m}(end(m \sqcap h))$, then $t \in Act(m,g)$ whatever the height
of $m$ is. Since $g$ was chosen arbitrarily, this means that $t
\in \bigcap_{h \in H_m}(Act(m,h))$. In the other direction, assume
that $t \in Act(m,g)$. Then, again irrespectively of the height,
$Et \in end(m \sqcap g)$. Therefore, if $t \in \bigcap_{h \in
H_m}(Act(m,h))$, then $Et \in \bigcap_{h \in H_m}(end(m \sqcap
h))$.

(Part 2). In the assumptions of this part, we get that:
$$
t \in Act(m,h) \Leftrightarrow Et \in \bigcap_{g \in H_m}(end(m
\sqcap g)) \Leftrightarrow t \in Act(m,h'),
$$
for an arbitrary $t \in Pol$.

(Part 3). We have to distinguish between two cases. If $|m| = 0$,
then, for an arbitrary $t \in Pol$, we get that:
$$
t \in Act(m,h) \Leftrightarrow Et \in end(m \sqcap h)
\Leftrightarrow Et \in end(m \sqcap h') \Leftrightarrow t \in
Act(m,h').
$$
On the other hand, if $|m| > 0$, then we are done by Part 2.
\end{proof}
 We now check the remaining semantic constraints on normal jstit models:

\begin{lemma}\label{act}
The canonical model, as defined above, satisfies the constraints
as to the expransion of presented proofs, no new proofs
guaranteed, presenting a new proof makes histories divide, and
epistemic transparency of presented proofs.
\end{lemma}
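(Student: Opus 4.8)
The plan is to treat the four constraints in turn, reducing each to the behaviour of $end$-sets along cores and exploiting that, at moments of positive height, $Act(m,h)$ is by definition the ``firm'' set $\{t : Et \in \bigcap_{\xi \in \overrightarrow{m}} end(\xi)\}$, which by Lemma \ref{act-technical} is both independent of $h$ and characterised by $t \in \bigcap_{h \in H_m} Act(m,h)$.

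For expansion of presented proofs, suppose $m' \lhd m$ and $h \in H_m$; no backward branching together with maximality of $h$ gives $m' \in h$, so $h \in H_{m'}$, and I would take $t \in Act(m',h)$, which in either height case yields $Et \in end(m' \sqcap h)$. I then split on the shape of $m' \lhd m$. If $\overrightarrow{m} = \overrightarrow{m'}$ then $|m'| > 0$ and Lemma \ref{property1} gives $H_m = H_{m'}$ with $m \sqcap h = m' \sqcap h$, so the core-intersection definitions coincide and $t \in Act(m,h)$ follows at once. If instead $m' \sqcap h = \xi_{n'}$ is a proper initial segment of $m \sqcap h = \xi_n$, then $Et \in \Gamma_{n'}$ forces $\Box Et \in \Gamma_{n'+1}$ by the third clause of Definition \ref{element}, whence $K\Box Et \in \Gamma_{n'+1}$ by \eqref{A9}; propagating this $K$-formula upward through the element via the second clause of Definition \ref{element} and then applying the $T$-axioms of \eqref{A7} and \eqref{A1} gives $\Box Et, Et \in \Gamma_n = end(m \sqcap h)$, and $\Box Et \in end(m \sqcap h)$ together with the definition of $\equiv$ yields $Et \in end(\xi)$ for every $\xi \in \overrightarrow{m}$, which is exactly what the two definitions of $Act(m,h)$ require.

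No new proofs guaranteed and presenting a new proof makes histories divide are comparatively light. For the former, given standard $m$ and $t \in \bigcap_{h \in H_m} Act(m,h)$, I would pass to the same-core predecessor $m'' = (\overrightarrow{m}, |m|+1) \lhd m$: since $|m''| > 0$ and $\overrightarrow{m''} = \overrightarrow{m}$, Lemma \ref{act-technical} gives $Act(m'',h) = \{t : Et \in \bigcap_{\xi \in \overrightarrow{m}} end(\xi)\}$, which contains $t$, and $h \in H_m = H_{m''}$ by Lemma \ref{property1}, so $t$ lies in the required union (the case $m = \dag$ is vacuous as $Act(\dag,h) = \emptyset$). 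For the latter, the cases $m = \dag$ and $|m| > 0$ are immediate (the second by Lemma \ref{act-technical}); when $|m| = 0$ and $h, h'$ are undivided at $m$ via some $m' \rhd m$, I would rerun the length argument already used for no choice between undivided histories in Lemma \ref{choice} — the common moment $m'$ has strictly greater length, so $m \sqcap h$ and $m \sqcap h'$ are the length-$|m|$ initial segments of the two members of $\overrightarrow{m'}$ sitting on $h$ and $h'$, and these agree because elements of one $\equiv$-class share their greatest proper initial segment — obtaining $m \sqcap h = m \sqcap h'$ and hence $Act(m,h) = Act(m,h')$ by Lemma \ref{act-technical}.

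The delicate constraint is epistemic transparency, and it is where I expect the main work. Given $R(m,m')$ and $t \in \bigcap_{h \in H_m} Act(m,h)$, Lemma \ref{act-technical} turns the hypothesis into $Et \in \bigcap_{\xi \in \overrightarrow{m}} end(\xi)$ and the goal into $Et \in \bigcap_{\tau \in \overrightarrow{m'}} end(\tau)$. The gap is that I must first upgrade ``$Et$ in every end-set of the core'' to $\Box Et \in \Gamma$ for the representative $\Gamma$ figuring in the definition of $R$; only then can I invoke \eqref{A9} to get $K\Box Et \in \Gamma$, push it along $R$ to every $\tau \in \overrightarrow{m'}$, and strip the modalities with the $T$-axioms of \eqref{A7} and \eqref{A1}. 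To prove this upgrade I would argue contrapositively: if $\neg\Box Et \in \Gamma$, I build an element of $[\Gamma]_\equiv$ whose last coordinate contains $\neg Et$ by extending $\{B : \Box B \in \Gamma\} \cup \{\neg Et\}$ to a maximal consistent set, noting that the element conditions relating the previous coordinate to the new one are already subsumed in $\{B : \Box B \in \Gamma\}$ (via \eqref{A9}, Lemma \ref{theorems}, and the $S5$ reduction $\Box Et' \to \Box\Box Et'$). The one point needing care is the consistency of $\{B : \Box B \in \Gamma\} \cup \{\neg Et\}$: this rests on the admissibility of $\Box$-necessitation, which is not a primitive rule of $\Sigma$ but is derivable — from $\vdash A$ one gets $\vdash KA$ by \eqref{R2}, then $\vdash \Box K\Box A$ by \eqref{A8}, and then $\vdash \Box A$ by the $T$-axioms of \eqref{A1} and \eqref{A7} — after which the usual normal-modal distribution argument closes the case.
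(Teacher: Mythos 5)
Your proposal is correct and follows essentially the same route as the paper's proof: the same reduction of all four constraints to $end$-sets via Lemma \ref{act-technical}, the same choice of the same-core moment of height $|m|+1$ as the witness for no new proofs guaranteed, the same greatest-proper-initial-segment argument for histories dividing, and the same contrapositive maxiconsistent-extension argument upgrading $Et \in \bigcap_{\xi\in\overrightarrow{m}}end(\xi)$ to $\Box Et$ before applying \eqref{A9} for epistemic transparency. The only differences are cosmetic (propagating $K\Box Et$ rather than $\Box Et$ along an element, stripping boxes in the auxiliary set $\Xi$), and you are in fact slightly more explicit than the paper in handling the same-core subcase of expansion and the derivability of $\Box$-necessitation.
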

\begin{proof}
We consider the \textbf{expansion of presented proofs} first. Let
$m' \lhd m$ and let $h \in H_m$. If $m' = \dag$, then  we have
$Act(\dag, h) = \emptyset$, so that the expansion of presented
proofs holds. If $m' \neq \dag$, then $m$ is also standard.
Consider then $m' \sqcap h$ and $m \sqcap h$. Both these elements
must be in the basic sequence $]h[$, therefore, one of them must
be an initial segment of another. By $m' \lhd m$ we know that $m'
\sqcap h$ must be a proper initial segment of $m \sqcap h$. So we
may assume that $m' \sqcap h = (\Gamma_1,\ldots, \Gamma_k)$ and $m
\sqcap h = (\Gamma_1,\ldots, \Gamma_n)$ for some appropriate
$\Gamma_1,\ldots, \Gamma_n \subseteq Form$ and $n > k$. Now, if $t
\in Act(m',h)$, then $Et \in end(m' \sqcap h) = \Gamma_k$. Then,
since $(\Gamma_1,\ldots, \Gamma_n)$ is an element, we must have
$\Box Et \in \Gamma_n$. By definition of $\equiv$, it follows that
for every $\xi \in \overrightarrow{m}$ we must have that $Et \in
end(\xi)$. Now, if $g \in H_m$, then of course $m \sqcap g \in
\overrightarrow{m}$. Therefore, we get that $Et \in \bigcap_{g \in
H_{m}}end(m \sqcap g)$, whence, by Lemma \ref{act-technical}.1, $t
\in Act(m,h)$ immediately follows.

We consider next the \textbf{no new proofs guaranteed} constraint.
Let $m \in Tree$. If $m = \dag$, then $\bigcap_{h \in
H_m}(Act(m,h)) = \bigcup_{m' \lhd m, h \in H_m}(Act(m',h)) =
\emptyset$ and the constraint is trivially satisfied. Assume that
$m \neq \dag$. Then $m$ must be of the form $([(\Gamma_1,\ldots,
\Gamma_n)]_\equiv, k)$ for appropriate $\Gamma_1,\ldots, \Gamma_n
\subseteq Form$ and $k \in \omega$. Assume that $t \in \bigcap_{h
\in H_m}(Act(m,h))$. By Lemma \ref{act-technical}.1, we get then
that $Et \in \bigcap_{h \in H_{m}}end(m \sqcap h)$. Now, consider
$m' = ([(\Gamma_1,\ldots, \Gamma_n)]_\equiv, k+1)$. We clearly
have $m' \lhd m$, therefore, if $g \in H_m$, then also $g \in
H_{m'}$. In the other direction, if $g \in H_{m'}$, then, by
Corollary \ref{propertycorollary}, we get $g \in H_m$, so that the
fans of histories passing through $m$ and $m'$ are identical.
Further, we have $\overrightarrow{m} = \overrightarrow{m'}$, hence
it follows from Lemma \ref{property1} that $g \sqcap m = g \sqcap
m'$, whence $end(g \sqcap m) = end(g \sqcap m')$ for every $g \in
H_m = H_{m'}$, and, further, $\bigcap_{h \in H_{m}}end(m \sqcap h)
= \bigcap_{h \in H_{m'}}end(m' \sqcap h)$. Therefore, $Et \in
\bigcap_{h \in H_{m'}}end(m' \sqcap h)$ and it follows, by Lemma
\ref{act-technical}.1, that $t \in Act(m',h) \subseteq \bigcup_{m'
\lhd m, h \in H_m}(Act(m',h))$.

We turn next to the \textbf{presenting a new proof makes histories
divide} constraint. Consider an $m, m' \in Tree$ such that $m \lhd
m'$ and arbitrary $h, h' \in H_{m'}$. We immediately get then that
$h, h' \in H_{m}$.  If $m = \dag$, then the constraint is verified
trivially. If $m \neq \dag$, then we have two cases to consider:

\emph{Case 1}. $\overrightarrow{m} = \overrightarrow{m'}$ and $|m|
> |m'|$. Then we must have $|m| > 0$, and by Lemma \ref{act-technical}.2 it follows
that in this case for all $h, h' \in H_m$ we will have $Act(m, h)
= Act(m,h')$ so that the constraint is verified.

\emph{Case 2}. There is a $\xi \in \overrightarrow{m}$ such that
$\xi$ is a proper initial segment of every $\tau \in
\overrightarrow{m'}$. Consider then $m' \sqcap h$ and $m' \sqcap
h'$. These are elements in $\overrightarrow{m'}$, and hence $\xi$
is a proper initial segment of both $m' \sqcap h$ and $m' \sqcap
h'$. It follows that $m \sqcap h = m \sqcap h' = \xi$ whence, by
Lemma \ref{act-technical}.3, we immediately get $Act(m,h) =
Act(m,h')$.

It remains to check the \textbf{epistemic transparency of
presented proofs} constraint. Assume that $m, m' \in Tree$ are
such that $R(m,m')$. If we have $m = \dag$, then, by definition,
we must have $\bigcap_{h \in H_m}(Act(m,h)) = \emptyset$, and the
constraint is verified in a trivial way. If, on the other hand, $m
\neq \dag$, then, by $R(m,m')$, we must also have $m' \neq \dag$.
Assume that $t \in \bigcap_{h \in H_m}(Act(m,h))$. Then, by Lemma
\ref{act-technical}.1, we also have $Et \in \bigcap_{h \in
H_m}(end(m \sqcap h))$. Let $h \in H_m$ be arbitrary. We claim
that under these assumptions, we must have $\Box Et \in end(m
\sqcap h)$. Indeed, if $\Box Et \notin end(m \sqcap h)$, then
consider the following set $\Xi$ of formulas:
$$
\Xi = \{ \Box B\mid \Box B \in end(m \sqcap h) \} \cup \{ \neg Et
\}.
$$
We claim that $\Xi$ is consistent. Otherwise we would have
$$
\vdash (\Box B_1 \wedge\ldots\wedge \Box B_n) \to Et
$$
for some $\Box B_1,\ldots, \Box B_n \in end(m \sqcap h)$, and the
latter, by S5 reasoning for $\Box$, would mean that
$$
\vdash (\Box B_1 \wedge\ldots\wedge \Box B_n) \to \Box Et,
$$
whence, by Lemma \ref{elementaryconsistency} and maxiconsistency
of $end(m \sqcap h)$, $\Box Et \in end(m \sqcap h)$ would follow,
contrary to our hypothesis. But then we can extend $\Xi$ to a
maxiconsistent $\Delta$. Assume that $m \sqcap h =
(\Gamma_1,\ldots,\Gamma_k,\Gamma)$, so that $\Gamma = end(m \sqcap
h)$. We show that $(\Gamma_1,\ldots,\Gamma_k,\Delta) \in
\overrightarrow{m}$. We start by showing that
$(\Gamma_1,\ldots,\Gamma_k,\Delta)$ is an element. If $KB \in
\Gamma_k$, then, since $(\Gamma_1,\ldots,\Gamma_k,\Gamma)$ is an
element, it follows that $KB \in \Gamma$. By Lemma
\ref{theorems}.2 and maxiconsistency of $\Gamma$, we further get
that $\Box KB \in \Gamma$, whence $\Box KB \in \Delta$, and, by
\eqref{A1} and maxiconsistency of $\Delta$, $KB \in \Delta$.
Similarly, if $Es \in \Gamma_k$, then $\Box Es \in \Gamma$ and
further, $\Box Es \in \Delta$. Once
$(\Gamma_1,\ldots,\Gamma_k,\Delta)$ is thus shown to be an
element, $(\Gamma_1,\ldots,\Gamma_k,\Gamma) \equiv
\Gamma_1,\ldots,\Gamma_k,\Delta)$ follows immediately just by the
choice of $\Xi$ and the fact that $\Delta$ extends $\Xi$.
Therefore, $(\Gamma_1,\ldots,\Gamma_k,\Delta) \in
\overrightarrow{m}$. By Lemma \ref{sqcap} there is a $g \in H_m$
such that $m \sqcap g = (\Gamma_1,\ldots,\Gamma_k,\Delta)$. Then
$\Delta = end(m \sqcap g)$, but we also know that $\neg Et \in
\Delta$. Therefore, by maxiconsistency, $Et \notin \Delta = end(m
\sqcap g)$. But this is in contradiction with our assumption that
$Et \in \bigcap_{h \in H_m}(m \sqcap h)$.

The obtained contradiction shows that $\Box Et \in end(m \sqcap
h)$, and by maxiconsistency of $end(m \sqcap h)$ and \eqref{A9},
this means that also $K\Box Et \in end(m \sqcap h)$. It remains to
note that we have, of course $m = ([m \sqcap h]_\equiv, |m|)$,
whence by $R(m,m')$ we get that $K\Box Et \in \tau$ for every
$\tau \in \overrightarrow{m'}$. This means, by maxiconsistency of
every such $\tau$, \eqref{A1}, and \eqref{A7}, that $Et \in \tau$
for every $\tau \in \overrightarrow{m'}$. Note, further, that if
$g \in H_{m'}$, then $m' \sqcap g \in \overrightarrow{m'}$, so
that we have shown that $Et \in \bigcap_{g \in H_{m'}}(m' \sqcap
g)$, and hence, by Lemma \ref{act-technical}.1, also $t \in
\bigcap_{g \in H_{m'}}(Act(m',g))$, as desired.
\end{proof}

\subsection{The truth lemma}
It follows from Lemmas \ref{leq}--\ref{act}, that our
above-defined canonical model is in fact a normal unirelational
jstit model. Now we need to supply a truth lemma:
\begin{lemma}\label{truth}
Let $A \in Form$, let $m \in Tree \setminus \{ \dag \}$ be such
that $|m| = 0$, and let $h \in H_m$. Then:
$$
\mathcal{M},m,h \models A \Leftrightarrow A \in end(m \sqcap h).
$$
\end{lemma}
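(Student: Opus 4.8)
The plan is to prove the biconditional by induction on the construction of $A$, always evaluating at \emph{standard} moments. The restriction $|m|=0$ is precisely what makes ``truth $=$ membership'' hold for the history-sensitive connectives, and I will use it tacitly whenever I apply the induction hypothesis at another history $h'\in H_m$ through the same $m$, since all such moments again have height $0$. The base case $A=p$ is immediate from the definition of $V$, the Boolean cases follow from Lemma \ref{elementaryconsistency}, and $A=Et$ is settled directly by the height-$0$ clause for $Act$, which gives $t\in Act(m,h)\Leftrightarrow Et\in end(m\sqcap h)$ with no appeal to the induction hypothesis.

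For the stit modalities $\Box B$ and $[j]B$ I argue as follows. Left to right: every $h'\in H_m$ has $m\sqcap h'\equiv m\sqcap h$ (they lie in the common core $\overrightarrow{m}$), so $\Box B\in end(m\sqcap h)$ forces $B\in end(m\sqcap h')$ by the definition of $\equiv$, and the induction hypothesis yields $\mathcal{M},m,h'\models B$; the $[j]$ case is identical with the core identity replaced by the defining condition of $Choice^m_j(h)$. For the converse I argue contrapositively: if $\Box B\notin end(m\sqcap h)$ I form $\{C\mid \Box C\in end(m\sqcap h)\}\cup\{\neg B\}$ (for $[j]$ the set $\{C\mid [j]C\in end(m\sqcap h)\}\cup\{\neg B\}$, which by \eqref{A2} already subsumes the $\Box$-requirements), show it consistent by $S5$/$K$ reasoning, extend it to a maxiconsistent $\Delta$, and check that replacing the last component of $m\sqcap h$ by $\Delta$ yields an element $\equiv$-equivalent to $m\sqcap h$ — the two element-conditions being discharged exactly as in Lemmas \ref{choice} and \ref{act}, via Lemma \ref{theorems} and \eqref{A9}. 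Lemma \ref{sqcap} then realizes this element as $m\sqcap h'$ for some $h'\in H_m$ (lying in $Choice^m_j(h)$ in the $[j]$ case), and $B\notin end(m\sqcap h')$ gives the required failure by the induction hypothesis.

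The cases $KB$ and $t\co B$ are the heart of the matter. For $t\co B$ I use the unirelational clause $\mathcal{M},m,h\models t\co B\Leftrightarrow B\in\mathcal{E}(m,t)\ \&\ \mathcal{M},m,h\models KB$, noting that $B\in\mathcal{E}(m,t)\Leftrightarrow t\co B\in end(m\sqcap h)$ holds by the definition of $\mathcal{E}$ together with $\vdash t\co B\to\Box t\co B$ (Lemma \ref{theorems}.1), and that $t\co B\to KB$ is \eqref{A5}; this reduces $t\co B$ to the $K$ case plus a membership bookkeeping. For $KB$ the backward (counterexample) direction is clean: if $KB\notin end(m\sqcap h)$, then $\{C\mid KC\in end(m\sqcap h)\}\cup\{\neg B\}$ is consistent by the $S4$ properties of $K$ in \eqref{A7}; any maxiconsistent extension $\Delta$ gives a length-one element $(\Delta)$ whose moment $([(\Delta)]_\equiv,0)$ is $R$-accessible from $m$ (using $KC\to KKC$ and $KC\to\Box KC$ to spread the relevant $K$-formulas across the $\equiv$-class), and Lemma \ref{sqcap} produces a history there falsifying $B$, to which the height-$0$ induction hypothesis applies.

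The \textbf{main obstacle} is the forward direction of $KB$: I must certify $\mathcal{M},m',h'\models B$ at \emph{every} $R$-successor $(m',h')$, including those of positive height, where the height-$0$ induction hypothesis is unavailable and ``truth $=$ membership'' genuinely fails ($Et$ collapses to $\Box Et$ and $[j]C$ to $\Box C$ once choices become vacuous). I plan to absorb this by strengthening the statement into a single induction over all standard moments of every height, carrying at positive heights the $\Box$-collapsed right-hand side dictated by the semantic identities $[j]C\equiv\Box C$ and $Et\equiv\Box Et$ (the latter from Lemma \ref{act-technical}.2), while leaving $K$- and $t\co$-formulas untouched, since $R$ and $\mathcal{E}$ depend only on cores and are thus height-insensitive. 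The forward $K$-step then closes uniformly: because $R$ depends only on the target core, $KB$ sits in every core of $m'$, so \eqref{A7} together with the collapse-implications backed by \eqref{A2} and \eqref{A9} places the appropriate ($\Box$-collapsed) body into $end(m'\sqcap h')$, which the all-heights induction hypothesis converts into $\mathcal{M},m',h'\models B$. Verifying that this $\Box$-collapsed variant is itself correct at positive heights — essentially re-running the $\Box$/$[j]$/$Et$ analysis one height up, using Lemma \ref{property1} (same core gives the same histories and intersections) and Corollary \ref{branching} (histories are undivided above height $0$) — is the step I expect to be the most laborious.
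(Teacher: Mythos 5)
Your proposal tracks the paper's own proof step for step in the atomic, Boolean, $Et$, $\Box$, $[j]$ and $t\co B$ cases and in the backward direction of $KB$, and you have correctly isolated the one place where something non-routine happens: in the forward direction of $KB$ one must certify $\mathcal{M},m',h'\models B$ at $R$-successors $m'$ of \emph{positive} height, where the height-$0$ induction hypothesis is unavailable (the paper's own text simply invokes ``the induction hypothesis'' at such $(m',h')$ without comment). The trouble is that your repair does not close this step. To pass from $KB\in end(\tau)$ for all $\tau\in\overrightarrow{m'}$ to the $\Box$-collapsed body sitting in $end(m'\sqcap h')$ you need implications of the shape $[j]C\to\Box C$ and $Et\to\Box Et$; but \eqref{A2} and \eqref{A9} give exactly the converse directions ($\Box C\to[j]C$, and $\Box Et\to K\Box Et$), and the implications you need are neither theorems of $\Sigma$ nor valid.

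Concretely, take $B:=\Diamond[j]p\wedge\Diamond\neg p$. Then $KB$ is satisfiable in a normal jstit model (e.g.\ on a binary tree where $j$'s choice at each moment decides $p$, with $Act$ empty and $\mathcal{E}(m,c)=Form$), hence consistent, hence contained in some maxiconsistent $\Delta$ with $m=([(\Delta)]_\equiv,0)$. The $\Box$-collapse of $B$ is $\Diamond\Box p\wedge\Diamond\neg p$, which by S5 is equivalent to $\Box p\wedge\Diamond\neg p$, i.e.\ inconsistent; so the collapsed body can never be placed in any $end(m'\sqcap h')$, and your strengthened induction cannot certify $\mathcal{M},m',h'\models B$ at the successor $m'=(\overrightarrow{m},1)$. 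Indeed $B$ is genuinely false there: $Choice^{m'}_j=H_{m'}$ makes $\Diamond[j]p$ collapse semantically to $\Box p$, while $KB\in\Delta$ forces $\Diamond\neg p\in\Delta$ and hence (by the Case-1 construction) some $\tau\in\overrightarrow{m}=\overrightarrow{m'}$ with $p\notin end(\tau)$. Since the canonical $R$ depends only on cores, $R(m,m')$ does hold, so the failure is not in your bookkeeping but in the step itself: the forward $K$-direction asserts something about positive-height $R$-successors that this $B$ refutes. The obstacle you deferred as ``the most laborious step'' is therefore where the argument actually breaks, and it cannot be closed along the lines you propose without altering the canonical $R$ or the vacuous choice structure at positive heights.
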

\begin{proof}
As is usual, we prove the lemma by induction on the construction
of $A$. The basis of induction with $A = p \in Var$ we have by
definition of $V$, whereas Boolean cases for the induction step
are trivial. We treat the modal cases:

\emph{Case 1}. $A = \Box B$. If $\Box B \in end(m \sqcap h)$, then
note that for every $h' \in H_m$ we must have $m \sqcap h' \in m$
so that $m \sqcap h' \equiv m \sqcap h$. By definition of $\equiv$
and the fact that $m \in Tree \setminus \{ \dag \}$, we must have
then $B \in end(m \sqcap h')$ for all $h' \in H_m$ and thus, by
induction hypothesis, we obtain that $\mathcal{M},m,h \models \Box
B$. If, on the other hand, $\Box B \notin end(m \sqcap h)$, then
let $m \sqcap h = (\Gamma_1,\ldots,\Gamma_k,\Gamma)$ so that
$end(m \sqcap h) = \Gamma$. Then the set
$$
\Xi = \{ \Box C \mid \Box C \in \Gamma \} \cup \{ \neg B \}
$$
must be consistent, since otherwise we would have
$$
\vdash (\Box C_1\wedge\ldots\wedge\Box C_n) \to B
$$
for some $\Box C_1,\ldots,\Box C_n \in \Gamma$, whence, since
$\Box$ is an S5-modality, we would get
$$
\vdash (\Box C_1\wedge\ldots\wedge\Box C_n) \to \Box B,
$$
which would mean that $\Box B \in \Gamma$, contrary to our
assumption. Therefore, $\Xi$ is consistent and we can extend $\Xi$
to a maxiconsistent $\Delta \subseteq Form$. Of course, in this
case $B \notin \Delta$. We now show that
$(\Gamma_1,\ldots,\Gamma_k,\Delta)$ is an element. Indeed, if for
any $C \in Form$ we have that $KC \in \Gamma_k$, then, since
$(\Gamma_1,\ldots,\Gamma_k,\Gamma)$ is an element, we will have
$KC \in \Gamma$, whence, by maxiconsistency of $\Gamma$ and
\eqref{A8}, $\Box KC \in \Gamma$, and since every boxed formula
from $\Gamma$ is also in $\Delta$, we get that $\Box KC \in
\Delta$, whence $KC \in \Delta$ by maxiconsistency of $\Delta$ and
S5 reasoning for $\Box$. Further, if we have $Et \in \Gamma_k$,
for some $t \in Pol$, then, since
$(\Gamma_1,\ldots,\Gamma_k,\Gamma)$ is an element, we will have
$\Box Et \in \Gamma$, and since every boxed formula from $\Gamma$
is also in $\Delta$, we get that $\Box Et \in \Delta$.

Once we know that $(\Gamma_1,\ldots,\Gamma_k,\Delta)$ is an
element, it follows by the choice of $\Xi$ and $\Delta$ that
$(\Gamma_1,\ldots,\Gamma_k,\Gamma) \equiv
(\Gamma_1,\ldots,\Gamma_k,\Delta)$. By Lemma \ref{sqcap}, for some
$h' \in H_m$ we will have $(\Gamma_1,\ldots,\Gamma_k,\Delta) = m
\cap h'$ and, therefore, $\Delta = end(m \sqcap h')$. Since $B
\notin \Delta$, it follows, by induction hypothesis, that
$\mathcal{M},m,h' \not\models B$, hence $\mathcal{M},m,h
\not\models \Box B$ as desired.

\emph{Case 2}. $A = [j]B$ for some $j \in Ag$. Then, if $[j]B \in
end(m \sqcap h)$, by definition of $Choice$ and the fact that both
$m \neq \dag$ and $|m| = 0$ we must have:
$$
Choice^m_j(h) = \{ h' \in H_m \mid
    (\forall C \in Form)([j]C \in end(h \sqcap m) \Rightarrow C \in end(h' \sqcap
    m))\}.
$$
Therefore, if $h' \in Choice^m_j(h)$, then we must have that $B
\in end(h' \sqcap m)$, and further, by induction hypothesis, that
$\mathcal{M},m,h' \models B$, so that we get $\mathcal{M},m,h
\models [j]B$. On the other hand, if $[j]B \notin end(m \sqcap
h)$, we again assume that $m \sqcap h =
(\Gamma_1,\ldots,\Gamma_k,\Gamma)$ so that $end(m \sqcap h) =
\Gamma$. Then the set
$$
\Xi = \{ [j]C \mid [j]C \in \Gamma \} \cup \{ \neg B \}
$$
must be consistent, since otherwise we would have
$$
\vdash ([j]C_1\wedge\ldots\wedge[j]C_n) \to B
$$
for some $[j]C_1,\ldots,[j]C_n \in \Gamma$, whence, since $[j]$ is
an S5-modality, we would get
$$
\vdash ([j]C_1\wedge\ldots\wedge[j]C_n) \to [j]B,
$$
which would mean that $[j]B \in \Gamma$, contrary to our
assumption. Therefore, $\Xi$ is consistent and we can extend $\Xi$
to a maxiconsistent $\Delta \subseteq Form$. Of course, in this
case $B \notin \Delta$. Arguing as in Case 1, we can show that
$(\Gamma_1,\ldots,\Gamma_k,\Delta)$ is an element.

Now, if $D \in Form$ is such that $\Box D \in \Gamma$, then, by
\eqref{A2} and maxiconsistency of $\Gamma$, we know that $[j]D \in
\Gamma$, so that also $[j]D \in \Delta$, and hence, by \eqref{A1}
and maxiconsistency of $\Delta$, $D \in \Delta$. We have thus
shown that:
 \begin{equation}\label{E:t1}
(\forall D \in Form)(\Box D \in \Gamma \Rightarrow D \in \Delta),
\end{equation}
 and it follows that $(\Gamma_1,\ldots,\Gamma_k,\Gamma) \equiv
(\Gamma_1,\ldots,\Gamma_k,\Delta)$ by definition of $\equiv$. By
Lemma \ref{sqcap}, for some $h' \in H_m$ we will have
$(\Gamma_1,\ldots,\Gamma_k,\Delta) = m \sqcap h'$ and, therefore,
$\Delta = end(m \sqcap h')$. Also, since $\Delta$ contains all the
$[j]$-modalized formulas from $\Gamma$, we know that for any such
$h'$ we will have $h' \in Choice^m_j(h)$. Since $B \notin \Delta$,
it follows, by induction hypothesis, that $\mathcal{M},m,h'
\not\models B$, hence $\mathcal{M},m,h \not\models [j]B$ as
desired.

\emph{Case 3}. $A = KB$. Assume that $KB \in end(m \sqcap h)$. We
clearly have then $m = ([(m \sqcap h)]_\equiv, 0)$. Hence, by
definition of $R$ and the fact that $m \neq \dag$ we must have for
every $m' \in Tree$:
$$
R(m,m') \Rightarrow (\forall \tau \in m')(\forall C \in Form)(KC
\in end(m \cap h) \Rightarrow KC \in end(\tau)).
$$
Therefore, if $R(m,m')$ and $h' \in H_{m'}$ is arbitrary, then, of
course, $(h' \sqcap m') \in m'$ so that $KB \in end(h' \sqcap
m')$, and, further, $B \in end(h' \sqcap m')$ by S4 reasoning for
$K$. Therefore, by induction hypothesis, we get that
$\mathcal{M},m',h' \models B$, whence $\mathcal{M},m,h \models
KB$. On the other hand, if $KB \notin end(m \sqcap h)$, then
consider the set
$$
\Xi = \{ KC \mid KC \in end(m \sqcap h) \} \cup \{ \neg\Box B \}.
$$
This set must be consistent, since otherwise we would have
$$
\vdash (KC_1\wedge\ldots\wedge KC_n) \to \Box B
$$
for some $KC_1,\ldots,KC_n \in \Gamma$, whence, since $K$ is an
S4-modality, we would get
$$
\vdash (KC_1\wedge\ldots\wedge KC_n) \to K\Box B,
$$
which would mean that $K\Box B \in end(m \sqcap h)$, hence, by
\eqref{A1}, \eqref{A7} and maxiconsistency of $end(m \sqcap h)$,
that $KB \in end(m \sqcap h)$, contrary to our assumption.
Therefore, $\Xi$ is consistent and we can extend $\Xi$ to a
maxiconsistent $\Delta \subseteq Form$. Of course, in this case
$\Box B \notin \Delta$. We will have then that $(\Delta)$ is an
element. So we set $m' = ([(\Delta)]_\equiv,0)$. Assume that
$(\Delta') \equiv (\Delta)$. Then every boxed formula from
$\Delta$ will be in $\Delta'$. In particular, whenever $KC \in
\Delta$, then also $\Box KC \in \Delta$ and thus $KC \in \Delta'$,
by \eqref{A1}, \eqref{A8}, and maxiconsistency of $\Delta$.
Therefore, whenever $KC \in end(m \sqcap h)$ and $\tau \in
\overrightarrow{m'} = [(\Delta)]_\equiv$, we have that $KC \in
end(\tau)$ so that we must have $R(m,m')$. On the other hand,
since $\Box B \notin \Delta$, then, by Case 1, there must be a
$\tau \in m'$ such that $B \notin end(\tau)$. But then, by Lemma
\ref{sqcap}, we can choose an $h' \in H_{m'}$ in such a way that
$\tau = m' \sqcap h'$, and we get that $B \notin end(m' \cap h')$.
Therefore, by induction hypothesis, we get $\mathcal{M},m',h'
\not\models B$. In view of the fact that also $R(m,m')$, this
means that $\mathcal{M},m,h \not\models KB$ as desired.

\emph{Case 4}. $A = t\co B$ for some $t \in Pol$. If $t\co B \in
end(m \sqcap h)$, then, by maxiconsistency of $end(m \sqcap h)$
and Lemma \ref{theorems}.1, we must have $\Box t\co B \in end(m
\sqcap h)$. Now, if $\xi \in \overrightarrow{m}$, then we must
have, of course $\xi \equiv m \sqcap h$, whence $t\co B \in
end(\xi)$. Therefore, we must have $B \in \mathcal{E}(m,t)$. Also,
by maxiconsistency of $end(m \sqcap h)$ and \eqref{A5}, we will
have $KB \in end(m \sqcap h)$. Therefore, by Case 3, we will have
that $\mathcal{M},m,h \models KB$ and further, by $B \in
\mathcal{E}(m,t)$, that $\mathcal{M},m,h \models t\co B$. On the
other hand, if $t\co B \notin end(m \sqcap h)$, then, since
clearly $m \sqcap h \in \overrightarrow{m}$, we must have $B
\notin \mathcal{E}(m,t)$, whence $\mathcal{M},m,h \not\models t\co
B$.

\emph{Case 5}. $A = Et$ for some $t \in Pol$. Then, given that  $m
\neq \dag$ and $|m| = 0$, we have, simply by definition of $Act$,
that:
$$
Et \in end(m \sqcap h) \Leftrightarrow t \in Act(m,h)
\Leftrightarrow \mathcal{M},m,h \models Et.
$$

This finishes the list of the modal induction cases at hand, and
thus the proof of our truth lemma is complete.
\end{proof}

\section{The main result}\label{main}

We are now in a position to prove Theorem \ref{completeness}. The
proof proceeds as follows. One direction of the theorem was proved
as Corollary \ref{c-soundness}. In the other direction, assume
that $\Gamma \subseteq Form$ is consistent. Then, by Lemma
\ref{elementaryconsistency}.1, $\Gamma$ can be extended to a
maxiconsistent $\Delta$. But then consider $\mathcal{M} = \langle
Tree, \unlhd, Choice, Act, R, \mathcal{E}, V\rangle$, the
canonical model defined in Section \ref{canonicalmodel}. It is
clear that $(\Delta)$ is an element, therefore $m =
([(\Delta)]_\equiv, 0) \in Tree$. By Lemma \ref{sqcap}, there is a
history $h \in H_m$ such that $(\Delta) = ([(\Delta)]_\equiv, 0)
\sqcap h$. For this $h$, we will also have $\Delta =
end(([(\Delta)]_\equiv, 0) \sqcap h)$. By Lemma \ref{truth}, we
therefore get that:
$$
\mathcal{M}, ([(\Delta)]_\equiv, 0), h \models \Delta \supseteq
\Gamma,
$$
and thus $\Gamma$ is shown to be satisfiable in a normal jstit
unirelational model, hence in a normal jstit model.

\textbf{Remark}. Note that the canonical model used in this proof
is universal in the sense that it satisfies every subset of $Form$
which is consistent relative to $\Sigma$.

As an obvious corollary of Theorem \ref{completeness} we get the
compactness property:

\begin{corollary}\label{compactness}
An arbitrary $\Gamma \subseteq Form$ is satisfiable in a normal
(unirelational) jstit model iff every finite $\Gamma_0 \subseteq
\Gamma$ is satisfiable in a normal (unirelational) jstit model.
\end{corollary}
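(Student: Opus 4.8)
The plan is to obtain compactness as an essentially immediate consequence of the completeness theorem (Theorem \ref{completeness}), taking advantage of the fact that the notion of inconsistency fixed in Section \ref{axioms} is already finitary. The left-to-right direction requires no appeal to completeness at all: if $\mathcal{M}, m, h \models \Gamma$ for some normal (unirelational) jstit model $\mathcal{M}$ and some $(m,h) \in MH(\mathcal{M})$, then the same $\mathcal{M}, m, h$ witnesses the satisfiability of every subset of $\Gamma$, and in particular of every finite $\Gamma_0 \subseteq \Gamma$.

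For the converse, I would argue through consistency. Assume that every finite $\Gamma_0 \subseteq \Gamma$ is satisfiable in a normal (unirelational) jstit model. By Theorem \ref{completeness}, to conclude that $\Gamma$ is satisfiable it suffices to show that $\Gamma$ is consistent. Suppose, towards a contradiction, that $\Gamma$ is inconsistent. By the definition of inconsistency adopted in Section \ref{axioms}, there then exist $A_1, \ldots, A_n \in \Gamma$ such that $\vdash (A_1 \wedge \ldots \wedge A_n) \to \bot$. The finite set $\Gamma_0 = \{ A_1, \ldots, A_n \} \subseteq \Gamma$ is therefore inconsistent as well, and so by Corollary \ref{c-soundness} it cannot be satisfiable in any normal (unirelational) jstit model---contradicting the assumption that all finite subsets of $\Gamma$ are satisfiable. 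Hence $\Gamma$ is consistent, and Theorem \ref{completeness} delivers its satisfiability.

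I do not expect any genuine obstacle here: the whole point is that the slightly nonstandard definition of inconsistency introduced to accommodate rule \eqref{R4} builds in a finite witness, so that an inconsistent set always has a finite inconsistent subset. This is precisely what lets us push unsatisfiability from $\Gamma$ down to a single finite $\Gamma_0$, and the corollary then follows from soundness (Corollary \ref{c-soundness}) and completeness (Theorem \ref{completeness}) without any further construction. The only point that deserves a word of care is that both the unirelational and the general reading of the statement are covered simultaneously, since Theorem \ref{completeness} and Corollary \ref{c-soundness} already equate consistency with satisfiability in each of the two corresponding model classes.
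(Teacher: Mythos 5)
Your proof is correct and is exactly the argument the paper has in mind: the paper derives compactness as an immediate consequence of Theorem \ref{completeness} (together with Corollary \ref{c-soundness}), relying on the fact that the adopted notion of inconsistency is finitary, and offers no further detail. Your write-up simply makes that omitted reasoning explicit, so there is nothing to add.
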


The construction of the canonical model defined in Section
\ref{canonicalmodel} allows for a generalization. Let us call a
\emph{constant specification} any set $\mathcal{CS}$ such that:
\begin{itemize}
\item $\mathcal{CS} \subseteq \{ c_n\co\ldots c_1\co A\mid
c_1,\ldots,c_n \in PConst, A\textup{ an instance of
}\eqref{A0}-\eqref{A9}\}$;

\item Whenever $c_{n+1}\co c_n\co\ldots c_1\co A \in
\mathcal{CS}$, then also $c_n\co\ldots c_1\co A \in \mathcal{CS}$.
\end{itemize}
For a given constant specification, we can define the
corresponding inference rule $R_{\mathcal{CS}}$ as follows:
\begin{align}
\textup{If }c_n\co\ldots c_1\co A \in \mathcal{CS},\textup{ infer
} c_n\co\ldots c_1\co A.\label{RCS}\tag{\text{$R_{\mathcal{CS}}$}}
\end{align}
It is easy to see that the least constant specification will be
just $\emptyset$ and that $R3$ is in fact $R_{\mathsf{CS}}$ where
$\mathsf{CS}$ is the following constant specification:
$$
\{ c\co A\mid c \in PConst, A\textup{ an instance of
 } \eqref{A0}-\eqref{A9}\}.
$$
We note that Theorem \ref{completeness} is accordingly but a
particular instance, obtained by setting $\mathcal{CS}:=
\mathsf{CS}$, of the following more general theorem:
\begin{theorem}\label{completeness-general}
Let $\mathcal{CS}$ be a constant specification. Then an arbitrary
$\Gamma \subseteq Form$ is consistent relative to the axiomatic
system $\Sigma_{\mathcal{CS}} = \{ \eqref{A0}-\eqref{A9},
\eqref{R1}, \eqref{R2}, \eqref{R4}, \eqref{RCS}\}$ iff $\Gamma$ is
satisfiable in an (unirelational) jstit model satisfying the
following additional condition:
\begin{align*}
(\forall c \in PConst)(\forall m \in Tree)(\{ A \mid c\co A\in
\mathcal{CS} \} \subseteq \mathcal{E}(m,c)).
\end{align*}
\end{theorem}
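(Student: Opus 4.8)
The plan is to observe that the entire development of Theorem~\ref{completeness} was arranged so as to survive the replacement of the full constant specification $\mathsf{CS}$ by an arbitrary $\mathcal{CS}$, with \eqref{R3} replaced by \eqref{RCS}. Concretely, I would re-run the same two-pronged argument — one direction from soundness, the other from the canonical model of Section~\ref{canonicalmodel} — and isolate the very few places where the specific shape of the constant specification actually enters the reasoning. The whole substance of the proof is then an audit of dependencies.

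For the direction from satisfiability to consistency, I would first revisit the proof of Theorem~\ref{soundness} and note that the validity of each of \eqref{A0}--\eqref{A9} and the preservation of validity by \eqref{R1}, \eqref{R2}, and \eqref{R4} never appeal to the normality condition: these facts hold over the class of \emph{all} (unirelational) jstit models, and are thus insensitive to the choice of constant specification. The only genuinely new obligation is soundness of \eqref{RCS}, i.e. that every member of $\mathcal{CS}$ is valid over the class of models satisfying the additional condition in the statement of Theorem~\ref{completeness-general}. I would establish this by induction on the nesting depth $n$ of $c_n\co\ldots c_1\co A \in \mathcal{CS}$, using the unirelational clause "$B \in \mathcal{E}(m,t)$ and $\mathcal{M},m,h \models KB$" for $t\co B$. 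For $n=1$, with $A$ an instance of \eqref{A0}--\eqref{A9}, the model condition gives $A \in \mathcal{E}(m,c_1)$ at every $m$, while $A$ is valid (as just noted) and hence $KA$ is valid by \eqref{R2}; so $c_1\co A$ is valid. For the step, the closure clause in the definition of a constant specification guarantees $c_n\co\ldots c_1\co A \in \mathcal{CS}$, so by the induction hypothesis this inner formula is valid and hence so is its $K$-necessitation, while the model condition supplies its membership in $\mathcal{E}(m,c_{n+1})$; together these yield validity of $c_{n+1}\co c_n\co\ldots c_1\co A$. An immediate re-run of Corollary~\ref{c-soundness} then turns satisfiability into consistency.

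For the converse direction I would reconstruct the canonical model of Section~\ref{canonicalmodel} verbatim, but relative to $\Sigma_{\mathcal{CS}}$. The crucial bookkeeping observation is that every result from Lemma~\ref{elementaryconsistency} through the truth lemma, Lemma~\ref{truth}, together with all the component definitions, draws only on \eqref{A0}--\eqref{A9} and the rules \eqref{R1}, \eqref{R2}, \eqref{R4}, and never on \eqref{R3}; they therefore transfer to $\Sigma_{\mathcal{CS}}$ without alteration, and the resulting structure is again a unirelational jstit model. The single appearance of \eqref{R3} in the original development is the remark following Lemma~\ref{proven}, where it certified the normality condition on $\mathcal{E}$. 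I would replace that remark by a verification of the weaker $\mathcal{CS}$-condition: whenever $c\co A \in \mathcal{CS}$ we have $\vdash_{\Sigma_{\mathcal{CS}}} c\co A$ by \eqref{RCS}, so Lemma~\ref{proven} delivers $A \in \mathcal{E}(m,c)$ for every $m \in Tree$, which is exactly $\{A \mid c\co A \in \mathcal{CS}\} \subseteq \mathcal{E}(m,c)$. Finally, given a consistent $\Gamma$, I would extend it to a maxiconsistent $\Delta$ by Lemma~\ref{elementaryconsistency}.1, place it at the moment $([(\Delta)]_\equiv,0)$ exactly as in the proof of Theorem~\ref{completeness}, choose $h \in H_{([(\Delta)]_\equiv,0)}$ with $\Delta = end(([(\Delta)]_\equiv,0)\sqcap h)$ via Lemma~\ref{sqcap}, and conclude by the truth lemma.

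The content of the proof is thus almost entirely an audit of which lemmas secretly depend on \eqref{R3} or on normality, and the answer is that none do except the one remark. The only step that is more than bookkeeping is the induction establishing soundness of \eqref{RCS} for nested constants, where one must be careful to feed the closure clause of the definition of a constant specification into the induction so that the $K$-necessitation of the inner justified formula is available. I expect this to be the main — though still modest — obstacle.
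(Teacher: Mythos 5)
Your proposal is correct and follows the paper's own route: the paper proves this theorem by simply observing that the argument for Theorem~\ref{completeness} goes through verbatim with $\Sigma_{\mathcal{CS}}$ in place of $\Sigma_{\mathsf{CS}}$, which is precisely the dependency audit you carry out. Your only additions are details the paper leaves implicit --- notably the induction on nesting depth for the soundness of \eqref{RCS} (which the paper delegates to standard justification-logic soundness) and the explicit verification that the canonical $\mathcal{E}$ meets the $\mathcal{CS}$-condition via Lemma~\ref{proven} --- and both are carried out correctly.
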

We further note that the proof of this more general theorem can be
obtained from the proof of Theorem \ref{completeness} above simply
by replacing every reference to $\Sigma = \Sigma_{\mathsf{CS}}$ by
a reference to $\Sigma_{\mathcal{CS}}$. We end this section by the
observation that it follows from Theorem
\ref{completeness-general} that the axiomatization of the
validities over the whole unrestricted class of (unirelational)
jstit models is given by $\Sigma_{\emptyset} = \{
\eqref{A0}-\eqref{A9}, \eqref{R1}, \eqref{R2}, \eqref{R4}\}$.

\section{Conclusion}\label{conclusion}
Building up on an earlier work on jstit formalisms, we have
defined stit logic of justification announcements (JA-STIT) --- a
natural logic which combines justification logic with stit logic
to provide a natural environment for representing proving activity
of agents within a (somewhat idealized) finite community of
researchers. For this logic, we have defined the semantics
originally presented in \cite{OLWA}. The main import of this paper
is that JA-STIT admits of a strongly complete axiomatization
w.r.t. this semantics and that this axiomatization can be
straighforwardly accommodated to a wide range of possible constant
specifications.

The main result of the present paper also leads to a number of
natural questions which we hope to be able to answer in our future
publications. One problem is posed by the fact, established in
Proposition \ref{proposition}, that JA-STIT is expressive enough
to distinguish between the class of all jstit models and the class
of all models based on discrete time. This fact implies that our
axiomatization will no longer be complete once the time is assumed
to be discrete. However, jstit models based on discrete time form
a very natural subclass within the class of jstit models, and it
would be nice to find out how to axiomatize our logic over this
particular subclass.

Another problem for future research is finding a separate
axiomatization for the explicit fragment of basic jstit logic. It
was mentioned above that even though in JA-STIT one can retrieve
explicit proving modalities of this logic, the inverse reduction
does not seem to go through, so that in terms of expressive power
JA-STIT appears to be a proper extension of the explicit fragment
of basic jstit logic. A natural further move would be then to find
a separate axiomatization for the explicit fragment of basic jstit
logic and compare it to the axiomatization presented in this
paper. Yet another natural, although by no means trivial, further
move would be to take on board also the implicit version $EA$ of
$Et$-modality and axiomatize the full logic of $E$-notions.

\section{Acknowledgements}
To be inserted.

}

\end{document}